\newcolumntype{P}[1]{>{\centering\arraybackslash}p{#1}}
\theoremstyle{plain}
\newtheorem{theorem}{Theorem}[section]
\newtheorem{lemma}[theorem]{Lemma}
\newtheorem{proposition}[theorem]{Proposition}
\theoremstyle{definition}
\newtheorem{definition}[theorem]{Definition}
\newtheorem{example}[theorem]{Example}
\theoremstyle{definition}
\newtcolorbox{activitybox}[1][]{%
    breakable,
    enhanced,
    colback=white,
    colframe=black,
    coltitle=white,
    #1
}
\numberwithin{equation}{section}
\numberwithin{figure}{section}
\newcommand{\pmu}{\partial_{\mu}}
\newcommand{\ptwomu}{\partial^2_{\mu}}
\newcommand{\pthreemu}{\partial^3_{\mu}}
\newcommand{\pnmu}{\partial^n_{\mu}}
\newcommand{\px}{\partial_x}
\newcommand{\ptwox}{\partial^2_x}
\newcommand{\rvlaw}[1][]{  {\mathscr{L}}{{(#1)}}  }
\newcommand{\couplaw}[1][]{  {\mu}^{X,N}_{#1}  }
\newcommand{\twocouplaw}[1][]{  {\mu}^{X,2N}_{#1}  }
\newcommand{\mclaw}[1][]{  {\mu}^{X}_{#1}  }
\newcommand{\nlaw}[1][]{  {\mu}^{N}_{#1}  }
\newcommand{\twonlaw}[1][]{  {\mu}^{2N}_{#1}  }
\newcommand{\twonlawone}[1][]{  {\mu}^{2N,(1)}_{#1}  }
\newcommand{\twonlawtwo}[1][]{  {\mu}^{2N,(2)}_{#1}  }
\newcommand{\nlawell}[2]{  {\mu}^{N_{#2}}_{#1}  }
\newcommand{\nlawoneell}[2]{  {\mu}^{N_{#2},(1)}_{#1}  }
\newcommand{\nlawtwoell}[2]{  {\mu}^{N_{#2},(2)}_{#1}  }
\newcommand{\nlawelli}[4]{  {\mu}^{N_{#2},{#3},{#4}}_{#1}  }
\newcommand{\nlawoneelli}[4]{  {\mu}^{N_{#2},(1),{#3},{#4}}_{#1}  }
\newcommand{\nlawtwoelli}[4]{  {\mu}^{N_{#2},(2),{#3},{#4}}_{#1}  }
\newcommand{\etaeulerlaw}[1][]{  {\mu}^{Z,N,h}_{\eta(#1)}  }
\newcommand{\eulerlaw}[1][]{  {\mu}^{Z,N,h}_{#1}  }
\newcommand{\eulertwonlawone}[1][]{  {\mu}^{Z,2N,(1),h}_{#1}  }
\newcommand{\eulertwonlawtwo}[1][]{  {\mu}^{Z,2N,(2),h}_{#1}  }
\newcommand{\etaeulertwonlawone}[1][]{  {\mu}^{Z,2N,(1),h}_{\eta(#1)}  }
\newcommand{\etaeulertwonlawtwo}[1][]{  {\mu}^{Z,2N,(2),h}_{\eta(#1)}  }
\newcommand{\eulernlawelli}[5]{  {\mu}^{Z,N_{#2},{#5},{#3},{#4}}_{#1}  }
\newcommand{\eulernlawoneelli}[5]{  {\mu}^{Z,N_{#2},(1),{#5},{#3},{#4}}_{#1}  }
\newcommand{\eulernlawtwoelli}[5]{  {\mu}^{Z,N_{#2},(2),{#5},{#3},{#4}}_{#1}  }
\newcommand{\eulernlawellh}[3]{  {\mu}^{Z,N_{#2},{#3}}_{#1}  }
\newcommand{\eulernlawoneellh}[3]{  {\mu}^{Z,N_{#2},(1),{#3}}_{#1}  }
\newcommand{\eulernlawtwoellh}[3]{  {\mu}^{Z,N_{#2},(2),{#3}}_{#1}  }
\newcommand{\claw}[1]{\mathscr L{#1}}
\newcommand{\bE}{\mathbb{E}}
\newcommand{\bN}{\mathbb{N}}
\newcommand{\bP}{\mathbb{P}}
\newcommand{\bR}{\mathbb{R}}
\newcommand{\cA}{\mathcal{A}}
\newcommand{\cC}{\mathcal{C}}
\newcommand{\cD}{\mathcal{D}}
\newcommand{\cF}{\mathcal{F}}
\newcommand{\cI}{\mathcal{I}}
\newcommand{\cK}{\mathcal{K}}
\newcommand{\cM}{\mathcal{M}}
\newcommand{\cP}{\mathcal{P}}
\newcommand{\cV}{\mathcal{V}}
\def \eps {\epsilon}
\newcommand{\lev}{\left\langle}
\newcommand{\rev}{\right\rangle}
\DeclarePairedDelimiter{\ceil}{\lceil}{\rceil}
\newcommand{\R}{\mathbb{R}}
\newcommand{\ud}{\mathrm{d}}
\newcommand{\HP}[1] 
    {\ensuremath{\mathscr{H}^{#1}}}
\renewcommand{\Xi}[1]{X_{i #1}}
\title{Antithetic multilevel sampling method for non-linear functionals of measure}
\author[1,2]{{\L}ukasz Szpruch \thanks{This was work has been supported by The Alan Turing Institute under the Engineering and Physical Sciences Research Council grant EP/N510129/1.}}
\author[1]{Alvin Tse}
\affil[1]{School of Mathematics, University of Edinburgh}
\affil[2]{The Alan Turing Institute, London}
\date{}
\begin{document}    
\selectlanguage{english}
\maketitle

\begin{abstract}
Let $\mu\in \mathcal{P}_2(\bR^d)$, where $\mathcal{P}_2(\bR^d)$ denotes the space of square integrable probability measures, and consider a Borel-measurable function $\Phi:\mathcal P_2(\mathbb R^d)\rightarrow \mathbb R $. In this paper we develop Antithetic Monte Carlo estimator (A-MLMC) for $\Phi(\mu)$, which achieves sharp error bound under mild regularity assumptions. The estimator takes as input the empirical laws $\mu^N = \frac1N \sum_{i=1}^{N}\delta_{X_i}$, where a) $(X_i)_{i=1}^N$ is a sequence of i.i.d samples from $\mu$ or b)  $(X_i)_{i=1}^N$ is a system of interacting particles (diffusions) corresponding to a McKean-Vlasov stochastic differential equation (McKV-SDE).  Each case requires a separate analysis. For a mean-field particle system, we also consider the empirical law induced by its Euler discretisation which gives a fully implementable algorithm. As by-products of our analysis, we establish a dimension-independent rate of uniform \textit{strong propagation of chaos}, as well as an $L^2$ estimate of the antithetic difference for i.i.d. random variables corresponding to general functionals defined on the space of probability measures. 
\end{abstract}



\section{Introduction}

The convergence of the empirical law $\mu^N$ to its limit $\mu$ for linear functionals of measure (i.e.  $F(\mu)=\int_{\bR^d}f(x)\mu(dx)$ for some function $f:\bR^d \rightarrow \bR$) is rather well understood in the literature. Indeed, $F(\mu^N)$ is an unbiased estimator of $F(\mu)$ and in the i.i.d. case, the classical central limit theorems provide sharp error bounds. However, for general non-linear functionals of measure $\Phi:\mathcal P_2(\mathbb R^d)\rightarrow \mathbb R $, $\Phi(\mu^N)$ is typically a biased estimator of $\Phi(\mu)$ and hence when seeking an optimal estimator, more sophisticated techniques are needed. For example, in the context of nested Monte Carlo estimators, with $F(\mu)=R(\int_{\bR^d}f(x)\mu(dx))$ and $R:\bR\rightarrow \bR$ being nonlinear, the multilevel Monte Carlo (MLMC) \cite{lemaire2017multilevel,giles2018decision} and antithetic multilevel Monte Carlo (A-MLMC)  \cite{giles2018multilevel} estimators are more efficient than $F(\mu^N)$. In this work, we study the general case of functionals of measure, which are sufficiently smooth in an appropriate sense. Most importantly, we do not rely on  specific structural assumptions imposed on $\Phi(\mu)$. Our goal is to find an estimator $\cA  $ that approximates $\Phi(\mu)$.  We are interested in sharp estimates of its corresponding mean-square error\footnote{We look at the mean-square error for simplicity, but a similar computation could be done to verify the Lindeberg  condition and produce CLT with an appropriate scaling.} 
$
\bE[(\Phi(\mu) - \cA)^2  ].
$
The multilevel Monte Carlo approach provides a very efficient strategy when one aims to find an implementable algorithm that achieves a sharp upper bound for the mean-square error for a given computational cost (in the i.i.d case, cost can be defined as the number of random numbers needed to be generated to compute $\cA$). We shall consider the analysis of mean-square error via antithetic Monte Carlo techniques in two different cases: i.i.d. samples and interacting diffusions.  It is also worth mentioning that the rates of convergence studied in this work are independent of the dimension, under sufficient regularity of the functions concerned. If we only assume that $\Phi$ is Lipschitz continuous with respect to the Wasserstein distance, i.e, there exists a constant $C>0$ such that
$
|\Phi(\mu) - \Phi(\nu)| \leq \, C W_2(\mu,\nu)$, for all $\mu,\nu \in \mathcal P_2(\bR^d),
$
one could bound $|\Phi(\mu) -  \bE\Phi(\mu_N)|$ by $\bE W_2(\mu,\mu_N)$. Consequently, following \cite{fournier2015rate} or \cite{dereich2013constructive}, the rate of convergence in the number of samples $N$  deteriorates as the dimension $d$ increases. We also refer the reader to recent works \cite{ambrosio2018pde,talagrand2018scaling,goldman2018large} that study the problem from the perspective of Monge-Amp\'{e}re PDEs.  On the other hand, recently,  authors in \cite{delarue2019master} (in Lemma 5.10) observed that if the functional $\Phi$ is twice-differentiable with respect to the functional derivative (see Appendix \ref{ linear functional derivatives and L-derivatives appendix} for its definition), then one can obtain a dimension-independent bound for the strong error $\bE|\Phi(\mu) -  \Phi(\mu_N)|^4$, which is of order $O(N^{-1/2})$ (as expected by CLT).

\paragraph{Recap of MLMC.}
Fix $\Phi:\mathcal P_2(\mathbb R^d)\rightarrow \mathbb R $ and $L>0$.  We seek to approximate $\Phi(\mu)$.  In order to do so, we approximate measure $\mu$ with measures corresponding to different levels $\ell  \in \{0, \ldots, L \}$, where each $\ell$ corresponds to some level of approximation. As $\ell$ increases, the bias decreases and the corresponding computational cost increases. One therefore faces an optimisation problem and tries to obtain the minimal computation cost for a given accuracy (or, equivalently, to minimise the bias for a fixed computational budget).  It turns out, perhaps surprisingly, that the MLMC estimator, which consists of a hierarchy of biased approximations, can achieve computational efficiency (or optimal scaling of variance) of vanilla Monte Carlo built from directly accessible unbiased samples \cite{giles2008multilevel,giles2015multilevel}. 

{Let us consider a sequence of probability spaces $(\Omega^{\ell},\mathcal F^{\ell},\mathbb P^{\ell} )_{\ell=1}^{L}$. For each level $\ell$, we consider $M_{\ell}$ copies of $(\Omega^{\ell},\mathcal F^{\ell},\mathbb P^{\ell} )$, that is $(\Omega^{\ell,\theta},\mathcal F^{\ell,\theta},\mathbb P^{\ell.\theta} )_{\theta=1}^{M_{\ell}}$.  For each level $\ell \in \{0, \ldots, L\}$ and  cloud $\theta \in \{1, \ldots, M_{\ell} \}$, we generate $N_{\ell}$ samples (depending on the two cases: i.i.d. or interacting diffusion) and construct  empirical measure $\mu^{N_\ell,\theta,\ell} $.} For simplicity, we set $N_{\ell} = 2^{\ell}$ and let  $(M_{\ell})_{\ell=0}^L$ be a sequence of  non-increasing natural numbers. The MLMC estimator is defined as 
    \begin{eqnarray} 
        {\cA}^{\text{MLMC}}
        :=  \frac{1}{M_0} \sum_{\theta=1}^{M_0} \Phi(\mu^{N_0,\theta,0}) 
        + \sum_{\ell=1}^L  \bigg[ \frac{1}{M_{\ell}} \sum_{\theta=1}^{M_{\ell}} \Big[ \Phi(\mu^{N_\ell,\theta,\ell}) -  \Phi(\mu^{N_{\ell-1},\theta,\ell}) \Big] \bigg] . \label{eq MLMC}
    \end{eqnarray}
    Note that, for $\ell \in \{1, \ldots, L \}$, 
    $ \bE  \big[ \Phi (\mu^{N_{\ell-1},\theta,\ell}) \big] =  \bE  \big[  \Phi (\mu^{N_{\ell-1},\theta,\ell-1})\big] $.  
    Therefore, taking the expectation on both sides of \eqref{eq MLMC} gives a telescopic sum, which simplifies to 
    $$ \bE  \big[  {\cA}^{\text{MLMC}} \big]  = \bE  \big[  \Phi (\mu^{N_{L},1,L}) \big].$$ 
    If the variance between successive approximations converges to zero as the level increases, MLMC reduces the computational cost of simulation by carefully combining many simulations on low levels with low accuracy (at a corresponding low cost); with relatively few simulations on high levels with low accuracy (and at a high cost). The idea has been independently developed by Giles and Heinrich \cite{giles2015multilevel,heinrich2001multilevel,kebaier2005statistical} (see also 2-level Monte Carlo of Kebaier \cite{kebaier2005statistical}) in the context of temporal approximation of SDEs and parametric integration. 
    
{The key challenge in developing MLMC estimators is the construction of suitable coupling between $(\mu^{N_\ell,\theta,\ell},\mu^{N_{\ell-1},\theta,\ell})$ that ensures a quick decay of the variance of the estimator across the levels. With this in mind, in this work, we develop an antithetic extension of the MLMC algorithm (A-MLMC) defined by}
 \begin{eqnarray}
        {\cA}^{\text{A-MLMC}}
       & := & \frac{1}{M_0} \sum_{\theta=1}^{M_0} \Phi(\mu^{N_0,\theta,0}) \nonumber \\
       && + \sum_{\ell=1}^L \bigg[ \frac{1}{M_{\ell}} \sum_{\theta=1}^{M_{\ell}} \Big[ \underbrace{ \Phi(\mu^{N_\ell,\theta,\ell})}_{:=\Phi^{f,\ell} } - \underbrace{\frac{1}{2} \Big( \Phi(\mu^{N_\ell,(1),\theta,\ell}) + \Phi(\mu^{N_\ell,(2),\theta,\ell}) \Big) }_{:=\Phi^{c,\ell}} \Big] \bigg], 
       \label{eq AMLMC} 
    \end{eqnarray}
    where $\mu^{N_\ell,(1),\theta,\ell}$ and $\mu^{N_\ell,(2),\theta,\ell}$ correspond respectively to empirical measures corresponding to the first half and second half of $N_{\ell}$ samples (to be defined precisely later for the two cases separately), such that 
    $\bE \big[\Phi(\mu^{N_\ell,\theta,\ell}) \big] = \bE \big[ \Phi(\mu^{N_\ell, (1), \theta,\ell}) \big] =\bE \big[ \Phi(\mu^{N_\ell,(2), \theta,\ell}) \big] $. Let $\text{Cost} \big(  \Phi^{f,\ell} - \Phi^{c,\ell} \big)$ denote the computational cost of computing the estimator $\Phi^{f,\ell} - \Phi^{c,\ell}$. Theorem 1 in \cite{cliffe2011multilevel} gives a result concerning the complexity of antithetic MLMC.
    Let $\alpha,  \beta,  \gamma\in \mathbb R$ be positive constants such that $\alpha \geq \frac{1}{2} \min \{ \beta , \gamma \} $.  Suppose that
    \begin{equation} \label{eq mlmc condtions}
    i)\,\, \big| \bE \big[  \Phi^{f,\ell} - \Phi^{c,\ell} \big] \big| \lesssim N_{\ell}^{-\alpha}, 
    \quad ii) \,\, \text{Var}\big[  \Phi^{f,\ell} - \Phi^{c,\ell} \big] \lesssim N_{\ell}^{-\beta},     
    \quad iii)\,\,  \text{Cost} \big(  \Phi^{f,\ell} - \Phi^{c,\ell} \big)\lesssim N_{\ell}^{\gamma} \,.
    \end{equation}
    Then there exist $L$ and a sequence  $(M_{\ell})_{\ell=1}^{L}$ such that estimator \eqref{eq AMLMC} has a bound 
    \[
    \mathbb E[ (   \Phi(\mu) - {\cA}^{\text{A-MLMC}}  )^2  ] \lesssim \epsilon^2\,,
    \]
    with a computational {cost of ${\cA}^{\text{A-MLMC}}$ of order $\eps^{-2}$ provided $\beta>\gamma$. The core part of this work is to establish that indeed the condition $\beta>\gamma$ holds. We also remark that $\eps^{-2}$ is the computational cost of a Monte Carlo estimator that achieves mean-square-error of the order $\eps^2$ and has access to unbiased i.i.d. samples}. 

\emph{Notations.} $\quad $ Throughout this article, we denote the Hilbert-Schmidt norm of any matrix by $\| \cdot \|$ and denote the standard Euclidean inner product $x \cdot y $ by $xy$. Also, $\rvlaw[{\xi}]$ denotes the law of $\xi$, for any square-integrable random variable $\xi$.  $\cP_k(\bR^d) $ denotes the set of probability measures with finite $k$th moment.   $C^k_{b, \text{Lip}}((\bR^d)^{\ell})$ denotes the set of all functions from $(\bR^d)^{\ell}$ to $\bR$  that are in $C^k$ with bounded and Lipschitz partial derivatives up to and including order $k$. For any $a,b \geq 0$, we denote by $a \lesssim b$ if $a \leq Cb$, for some constant $C>0$  that does not depend on $N$, $h$ or $\eps$. Finally, unless otherwise specified, $C$ denotes a generic constant that does not depend on $N$, $h$ or $\eps$, whose value may vary from line to line. For any probability space ($\Omega, \cF, \bP)$ with expectation operator $\bE$ and a random variable $\xi$, we construct a separate probability space ($\widetilde{\Omega}, \widetilde{\cF},\widetilde{ \bP})$ with expectation operator $\widetilde{\bE}$ and a random variable $\widetilde{\xi}$ with the same law as $\xi$.

\section{Main results}
\subsection{A-MLMC for i.i.d samples} For every $ 0 \leq \ell \leq L$ and $ 1 \leq \theta \leq  M_{\ell} $, we consider i.i.d. random variables $\{X_{i,\ell,\theta}\}_{1 \leq i \leq N_{\ell}}$ with law  $\mu\in \cP_2(\bR^d)$. The corresponding antithetic MLMC estimator for $\Phi(\mu)$ is given by \eqref{eq AMLMC}, where  $$
\mu^{N_{\ell},\theta,\ell}:= \frac{1}{N_{\ell}} \sum_{i=1}^{N_{\ell}} \delta_{X^{}_{i,\ell,\theta}}$$ is the empirical measure corresponding to each of the $\sum_{\ell=0}^L M_{\ell}$ independent {clouds} of samples indexed by $\theta \in \{ 1, \ldots, M_{\ell} \}$ and $\ell \in \{0, \ldots, L\}$. Moreover, 
 $$ \mu^{N_\ell,(1), \theta,\ell}:= \frac{1}{N_{\ell}/2} \sum_{i=1}^{N_{\ell}/2} \delta_{X_{i,\ell, \theta}}, \quad \quad \quad \mu^{N_\ell,(2), \theta,\ell}:= \frac{1}{N_{\ell}/2} \sum_{i=N_{\ell}/2+1}^{N_{\ell}} \delta_{X_{i, \ell,\theta}}.$$
 
We illustrate the power of antithetic MLMC for i.i.d. samples through the following example in which there is a linear dependence on the measure in $\Phi$. For simplicity of notations, we set
\begin{equation} \mu^{N}:= \mu^{N,1,0}, \quad \quad \mu^{N,(1)}:= \mu^{N,(1),1,0},  \quad \quad \mu^{N,(2)}:= \mu^{N,(2),1,0}.
\label{empirical measures simplified} 
\end{equation}
 \begin{example}
Consider $\Phi(\mu):=\int_{\bR^d}F(x)\mu(dx)$, where $F:\bR^d \rightarrow \bR$ has linear growth. We already observed that  $\bE[{\cA}^{\text{MLMC}}]= \bE[ {\cA}^{\text{A- MLMC}}]$. The postulated independence conditions imply that
 \[
 \mathbb Var[{\cA}^{\text{MLMC}}] =\frac{\mathbb Var[\Phi(\mu^{N_0})]}{M_0} 
 + \sum_{\ell=1}^L \frac{\mathbb Var[\Phi(\mu^{N_{\ell}})-\Phi(\mu^{N_{\ell-1}})]}{M_{\ell}}\,. 
 \]
 On the other hand, 
  \[
 \mathbb Var[{\cA}^{\text{A-MLMC}}] =\frac{\mathbb Var[\Phi(\mu^{N_0})]}{M_0} 
 + \sum_{\ell=1}^L \frac{\mathbb Var\Big[ \Phi(\mu^{N_{\ell}}) - \frac{1}{2} \big( \Phi(\mu^{N_{\ell},(1)}) + \Phi(\mu^{N_{\ell},(2)}) \big) \Big] }{M_{\ell}}\,. 
 \]
 It is clear that the efficiency of this algorithm hinges on good coupling estimates that result in small variances across levels $\ell$. Set $N_{\ell}:= 2N_{\ell-1}$.
 For ${\cA}^{\text{MLMC}}$, we have
\begin{equation*}
\begin{split}
    \mathbb Var[\Phi(\mu^{N_{\ell}}) & -\Phi(\mu^{N_{\ell-1}})]  =   
   \mathbb Var \bigg[ \bigg( \frac{1}{N_{\ell}}- \frac{1}{N_{\ell-1}}\bigg) \sum_{i=1 }^{N_{\ell-1}}F(X_i) + \frac{1}{N_{\ell}} \sum_{i=N_{\ell-1}+1 }^{N_{\ell}}F(X_i) \bigg]   \\
   & =  \bigg( \frac{1}{N_{\ell}}- \frac{1}{N_{\ell-1}}\bigg)^2 \sum_{i=1 }^{N_{\ell-1}}  \mathbb Var [F(X_i)] + \bigg( \frac{1}{N_{\ell}}  \bigg)^2 \sum_{i=N_{\ell-1}+1 }^{N_{\ell}} \mathbb Var [F(X_i)]   
    =  O(\frac{1}{N_{\ell}}).  
\end{split} 
\end{equation*}
On the other hand, for A-MLMC, we have 
\[
\mathbb Var\Big[ \Phi(\mu^{N_{\ell}}) - \frac{1}{2} \big( \Phi(\mu^{N_{\ell},(1)}) + \Phi(\mu^{N_{\ell},(2)}) \big) \Big] =0\,.
\]
 \end{example}
 \subsubsection{Complexity analysis of A-MLMC for i.i.d. samples} \label{complexity iid subsection} 
 The above example is indeed a very special case. This work explores regularity conditions of functionals $\Phi$ that lead to a reduction in variance of the antithetic difference for general functions of measures. This result is formulated in terms of  the class $\cM^L_k$ of $k$ times differentiable functions in the sense of linear functional derivatives. (See Definition \ref{eq: def MLk} for its precise meaning. See also Definition \ref{eq:classmk} for the class $\cM_k$ of $k$ times differentiable functions in the sense of L-derivatives that will be used in other theorems). 

Recall that, from Section 9 in \cite{giles2015multilevel},  the  second moment of the antithetic difference  given by
$$ U (\twonlaw[]) -  \frac{1}{2} \big( U (\twonlawone[]) + U (\twonlawtwo[]) \big) $$ 
converges to $0$ in the rate $O(1/N^2)$, for functions $U: \cP_2(\bR^d) \to \bR$ of the form
\begin{equation}  U(\mu):= F \bigg( \int_{\bR^d} G(x) \, \mu(dx) \bigg), \label{eq: giles antithetic special form} \end{equation}
where $G: \bR^d \to \bR$ is an integrable function and $F: \bR\to \bR$ is a twice-differentiable function with bounded derivatives.  The following theorem generalises the structure of $U$ from the aforementioned result, under a set of different conditions.  
{
\begin{theorem}[Theorem \ref{antithetic initial separation}, antithetic error on i.i.d. random variables] \label{anti initial intro}  Suppose that $\mu \in \cP_8(\bR^d)$ and $ U \in \cM^L_4(\cP_2(\bR^d))$.
Then there exists a constant $C>0$ such that 
$$ \bE \big| U (\twonlaw[]) -  \frac{1}{2} \big( U (\twonlawone[]) + U (\twonlawtwo[]) \big) \big|^2 \leq \frac{C}{N^2}. $$
\end{theorem}
The proof of the theorem can be found in Section \ref{sec iid}. 
 }
 Theorem  \ref{antithetic initial separation} allow us to establish bounds on the variance in \eqref{eq mlmc condtions}, that is we have
\begin{equation}
\mathbb Var\Big[ \Phi(\mu^{N_{\ell}}) - \frac{1}{2} \big( \Phi(\mu^{N_{\ell},(1)}) + \Phi(\mu^{N_{\ell},(2)}) \big) \Big] =O(\frac{1}{N_{\ell}^2}). \label{iid mlmc ii}
\end{equation}
By Theorem 2.11 in \cite{chassagneux2019weak}, under the same hypothesis on $\Phi$, we also have  \begin{equation}
| \bE[\Phi(\mu^{N_{\ell}})] - \Phi(\mu) | \leq O(\frac{1}{N_{\ell}}). 
\label{iid mlmc i}
\end{equation}
Finally, since the empirical measures $\mu^{N_{\ell}}$, $\mu^{N_{\ell},(1)}$ and $\mu^{N_{\ell},(2)}$ correspond to i.i.d. random variables, the cost of simulating the antithetic difference is given by
\begin{equation}
 \text{Cost} \Big[ \Phi(\mu^{N_{\ell}}) - \frac{1}{2} \big( \Phi(\mu^{N_{\ell},(1)}) + \Phi(\mu^{N_{\ell},(2)}) \big) \Big] =O({N_{\ell}}). \label{iid mlmc iii}
\end{equation}
Hence, by combining \eqref{iid mlmc ii}, \eqref{iid mlmc i} and \eqref{iid mlmc iii}, along with Theorem 1 in \cite{cliffe2011multilevel}, we have the following result regarding the complexity of the antithetic MLMC estimator.
\begin{theorem} \label{thm A mlmc complexity} 
Let $\mu \in \cP_8(\bR^d)$ and $\Phi \in \cM^L_4$. Then, for the  mean-square error $
\bE[(\Phi(\mu) - {\cA}^{\text{A-MLMC}})^2  ]
$ to be  of the order $O(\eps^2)$, there exist $L$ and a sequence $(M_{\ell})_{\ell=1}^L$ such that the computational cost of 
${\cA}^{\text{A-MLMC}}$ is of the order $O(\eps^{-2})$.
\end{theorem}

%

\subsection{A-MLMC for interacting diffusions}

The second situation we treat in this work concerns estimates of propagation-of-chaos type for the system of McKean-Vlasov SDEs (McKV-SDEs). Building on regularity results 
recently obtained in \cite{chassagneux2019weak}, we extend the analysis of the i.i.d. case presented above to interacting particle systems. To be more precise, fix $T>0$ and let $\{W_t\}_{t\in [0,T]}$ be a $d$-dimensional Brownian motion on a filtered probability space $(\Omega, \{ \cF_t \}_{t}, \mathcal{F}, \bP)$. Next, we consider functions $b: \mathbb{R}^{d} \times \cP_2(\mathbb{R}^{d} ) \rightarrow \mathbb{R}^d$, $\sigma: \mathbb{R}^{d} \times \cP_2(\mathbb{R}^{d} ) \rightarrow \mathbb{R}^{d} \otimes \bR^{d}$, and consider the corresponding McKV-SDE given by
\begin{equation} \label{eq:MVSDE} 
\begin{cases} 
      dX_t = \xi + \int_0^t b(X_s, \mclaw[s]) \,ds + \int_0^t \sigma (X_s, \mclaw[s]) \,dW_s, \quad \quad t \in [0, T], \\
       \mclaw[s] := \text{Law} (X_s), 
   \end{cases}
\end{equation}
where $\xi \sim \nu \in \cP_2( \bR^d)$. The theory of propagation of chaos, \cite{sznitman1991topics}, shows that \eqref{eq:MVSDE} arises as a limiting equation  of the system of interacting diffusions (particles) $\{ Y^{i,N}_t \}_{i=1, \ldots, N}$ on $(\bR^d)^N$ given by  
\begin{equation} \label{eq:particlesystem} \begin{cases} 
      Y^{i,N}_t = \xi_i + \int_0^t  b(Y^{i,N}_s, \nlaw[s]) \,ds + \int_0^t \sigma(Y^{i,N}_s, \nlaw[s]) \,dW^i_s, \quad 1 \leq i \leq N, \quad t \in [0,T], \\
       \nlaw[s] := \frac{1}{N} \sum_{i=1}^N \delta_{Y^{i,N}_s}, 
   \end{cases}
\end{equation}
  where $W^i, 1 \leq i \leq N,$ are independent $d$-dimensional Brownian motions and $\xi_i,  1 \leq i \leq N,$ are i.i.d. random variables  with law $\nu \in \cP_2(\bR^d)$.  We refer the reader to \cite{gartner1988mckean,sznitman1991topics,meleard1996asymptotic} for the classical results in this direction and to \cite{jourdain2007nonlinear,bossy2011conditional,fournier2016propagation,mischler2013kac,lacker2018strong} for more recent theory. Most of the results in the literature provide non-quantitative propagation of chaos  with a few notable exceptions. In the case where the coefficients of  \eqref{eq:MVSDE} are linear in measure and globally Lipschitz continuous,  \cite{sznitman1991topics} showed that $W_2(\claw(Y^{i,N}_t), \claw(X_t))=O(N^{-1/2})$.  We refer to Sznitman's result as strong propagation of chaos. Note that, in this work, we treat the case of McKean-Vlasov SDEs with coefficients with a general  dependence in measure. In the case of Lipschitz continuous dependence in measure in the 2-Wasserstein metric, the rate of strong propagation of chaos deteriorates with the dimension $d$, \cite[Ch. 1]{carmona2016lectures}.  We demonstrate that under regularity assumptions on $b$ and $\sigma$ in terms of L-derivatives, we have  a strong error bound in fourth moment that is dimension-independent: 
 \begin{theorem}[Theorem \ref{strong error}, uniform strong propagation of chaos] 
Assume \eqref{eq:Int}. Suppose that $b, \sigma \in \cM_3( \bR^d \times \cP_2(\bR^d))$. Then
$$ \bE \Big[ W_{\cC_T,2} \big( \nlaw[], \couplaw[] \big)^4 \Big] \leq \bE \bigg[ \frac{1}{N} \sum_{i=1}^N  \bigg( \sup_{t \in [0,T]} \big| X^i_{t} - Y^{i,N}_{t} \big|^4 \bigg) \bigg]  \leq \frac{C}{N^2} , $$ 
for some constant $C>0$.
\end{theorem}

In the case of interacting diffusions (McKV-SDEs), we wish to consider the corresponding antithetic MLMC estimator to estimate $\Phi(\mclaw[T]) $. As before, we set 
 $N_{\ell}:=2^{\ell}$. For each particle system $\{ Y^{i,2N} \}_{1 \leq i \leq 2N}$, we define  two sub-particle systems to have the same number of particles, each defined by
\begin{equation*} 
\begin{split}
       & Y^{i,2N,(1)}_t  =  \xi_i + \int_0^t  b \bigg( Y^{i,2N,(1)}_r, \twonlawone[r] \bigg)  \,dr + \int_0^t  \sigma \bigg( Y^{i,2N,(1)}_r,\twonlawone[r]  \bigg)    \,  dW^i_r, \quad 1 \leq i \leq N,  \\
    & Y^{i,2N,(2)}_t  = \xi_i + \int_0^t  b \bigg( Y^{i,2N,(2)}_r, \twonlawtwo[r]  \bigg)  \,dr + \int_0^t  \sigma \bigg( Y^{i,2N,(2)}_r,\twonlawtwo[r]  \bigg)    \,  dW^i_r, \quad N+1 \leq i \leq 2N,
\end{split}
    \end{equation*}
    where
\begin{equation} \twonlawone[r]:= \frac{1}{N} \sum_{i=1}^N \delta_{Y^{i,2N,(1)}_r} \quad \quad \text{ and } \quad \quad \twonlawtwo[r]:= \frac{1}{N} \sum_{i=N+1}^{2N} \delta_{Y^{i,2N,(2)}_r}. \label{eq: def empirical measures two parts}  \end{equation} 
Unlike the i.i.d. setting  considered above, these particles are not independent. The corresponding antithetic MLMC estimator for $\Phi(\mu)$ is again given by \eqref{eq AMLMC}, where $\nlawelli{T}{\ell}{\theta}{\ell}$, $\nlawoneelli{T}{\ell}{\theta}{\ell}$ and $ \nlawtwoelli{T}{\ell}{\theta}{\ell}$ are defined similarly as $\nlawell{T}{\ell}$, $\nlawoneell{T}{\ell}$ and $\nlawtwoell{T}{\ell}$ respectively, but correspond to the copies of probability spaces indexed by $\ell \in \{0, \ldots, L \}$ and $\theta \in \{ 1, \ldots, M_{\ell} \}$. Each probability space (indexed by  $\ell$, $\theta$) supports particles with initial conditions $\xi_{i,\ell, \theta }$, $i \in \{1, \ldots, N_{\ell} \}$,  driven by Brownian motions $W^{i,\ell, \theta},$ $i \in \{1, \ldots, N_{\ell} \}$. 

 \subsubsection{Complexity analysis of A-MLMC for interacting diffusions}
 Our analysis of complexity relies heavily on the calculus on 
$(\mathcal{P}_2(\bR^d),W_2)$  and we follow the approach presented by P$.$ Lions in his course at Coll\`{e}ge de France \cite{lions2014cours} (redacted by Cardaliaguet \cite{cardaliaguet2010notes}). The important object in our study, similar
to \cite{cardaliaguet2015master}, is the PDE written on the space $[0,T]\times  \cP_2(\R^d)$, which corresponds to the lifted semigroup and comes from the It\^{o}'s formula of functionals of measures established in \cite{buckdahn2017mean} and \cite{chassagneux2014probabilistic}. This line of research has been recently explored in  \cite[Ch. 9]{kolokoltsov2010nonlinear} and \cite[Th. 2.1]{mischler2015new} to obtain results of quantitative propagation of chaos for a general family of particle systems. We shall adopt the notion of L-derivatives (see Appendix A), as well as the notion of the class $\cM_k$ of $k$th order differentiable functions in the sense of L-derivatives (see Definition \ref{eq:classmk}).

To proceed with the analysis of complexity for interacting diffusions, we first define the notions of \emph{$p$th order interactions} and \emph{order of interactions}.
\begin{definition}[Interacting kernels with $p$th order interactions] \label{def interactions complexity} 
 $b$ and $\sigma$ are said to be of $p$\emph{th order interactions} if they take the forms 
\begin{eqnarray}
  b_i(x, \mu) & =  & \int_{\bR^d} \ldots \int_{\bR^d} \overline{b_i} (x, y_1, \ldots, y_p) \, \mu(dy_1) \ldots \mu(dy_p), \label{pth order b} \\
  \sigma_{i,j}(x, \mu) & = & \int_{\bR^d} \ldots \int_{\bR^d} \overline{{\sigma}_{i,j}}(x, y_1, \ldots, y_p) \, \mu(dy_1) \ldots \mu(dy_p) , \label{pth order sigma}
 \end{eqnarray}
    where $ \overline{b_i}, \overline{{\sigma}_{i,j}} : (\bR^d)^{p+1} \to \bR$ are continuous functions, for each $i,j \in \{1, \ldots, d\}$. 
 \end{definition}    
     Hence, in any particle system with $b$ and $\sigma$ given by \eqref{pth order b} and \eqref{pth order sigma}, each particle interacts with $N^p$ other particles. Since there are $N$ particles in total, the number of interactions of the entire system is $N^{p+1}$. 
   
   For $i \in \{1, \ldots, k \}$, let $S_k := \{Y^{i,N_k}\}_{1 \leq i \leq N_k}$ denote an interacting particle system. Then the \emph{order of interactions} of an estimator composed of particle systems $S_1, \ldots, S_k$ is defined by
    $$ \text{Order of interactions of estimator} := \sum_{j=1}^k \bigg[ \text{Order of interactions of particle system } S_j \bigg].$$

 In the subsequent analysis of this section, we assume that $b$ and $\sigma$ have the forms \eqref{pth order b} and \eqref{pth order sigma} respectively. We now compare the A-MLMC estimator with the ensemble estimator studied in \cite{{chassagneux2019weak}}. The ensemble estimator corresponds to
$$ Q_{M,N}:= \frac{1}{M} \sum_{\theta=1}^M \Phi(\mu^{N,\theta}_T), $$where $\mu^{N,\theta}_T$ denotes the empirical measure of the particles obtained for each i.i.d. {cloud }$\theta \in \{1, \ldots, M\}$. If $\Phi \in \cM_3$, $\overline{b_i} \in C^{3}_{b, \text{Lip}} ( (\bR^d)^{p+1} )$ (see Section \ref{section notations} for its definition) and that $\overline{{\sigma}_{i,j}}$ belongs to $C^{3}_{b, \text{Lip}} ( (\bR^d)^{p+1} ) $ and is uniformly bounded, then  it follows by \cite{chassagneux2019weak} that the number of interactions is of the order $O(\eps^{-2-p})$. By introducing Romberg extrapolation to the ensembles of particles \cite[Sec 1.1 and Th 2.17]{chassagneux2019weak}, the number of interactions can be reduced to the order $O(\eps^{-2-p/k})$,  under the assumption that $\Phi \in \cM_{2k+1}$, $\overline{b_i} \in C^{2k+1}_{b, \text{Lip}} ( (\bR^d)^{p+1} )$ and that $\overline{{\sigma}_{i,j}}$ belongs to $C^{2k+1}_{b, \text{Lip}} ( (\bR^d)^{p+1} ) $ and is uniformly bounded.  It is proven in Theorem \ref{complexity no time discret}  that the A-MLMC estimator $ {\cA}^{\text{A-MLMC}}$ (almost) achieves an optimal order of interactions (for $p=1$), whilst only requiring  $\Phi \in \cM_{4}$, $\overline{b_i} \in C^{4}_{b, \text{Lip}} ( (\bR^d)^{p+1} )$ and $\overline{{\sigma}_{i,j}} \in C^{4}_{b, \text{Lip}} ( (\bR^d)^{p+1} ) $. The table below provides detailed comparison among the aforementioned methods.

\begin{center}
\begin{table}[ht]
\caption{Comparison of the order of interactions for different estimators}
\begin{tabular}{ |l|l|l|l|l| }
\hline
\multirow{2}{*}{\quad \quad Estimator}  & \multirow{2}{*}{\thead{ \quad \quad \quad Order of \\ \quad \quad \quad interactions} } & \multicolumn{3}{ |c| }{Regularity assumption of} \\ 
&& \quad \quad \,   $\overline{b_i}$ & \quad \quad \, \,  $\overline{{\sigma}_{i,j}}$  & \, \, $\Phi$ \\ 
\hline
 \thead{Ensembles of particles \\ {} } &  \quad \quad \,  $O(\eps^{-2-p})$ & $C^{3}_{b, \text{Lip}} ( (\bR^d)^{p+1} )$ & \thead{$C^{3}_{b, \text{Lip}} ( (\bR^d)^{p+1} )$ \\ and uniformly bounded } & \, $\cM_3$ \\
\hline   
\thead{Romberg extrapolation \\ (from \cite[Sec 1.1 and Th 2.17]{chassagneux2019weak})} & \quad \quad \,  $O(\eps^{-2-p/k})$ & $C^{2k+1}_{b, \text{Lip}} ( (\bR^d)^{p+1} )$ & \thead{$C^{2k+1}_{b, \text{Lip}} ( (\bR^d)^{p+1} )$ \\ and uniformly bounded } & $\cM_{2k+1} $ \\
\hline
\thead{Antithetic MLMC \\ (for estimator \eqref{eq AMLMC}) \\ in Theorem \ref{complexity no time discret}} & \thead{$O(\eps^{-2} (\log \eps)^2)$, for $p=1$, \\ $ O(\eps^{-1-p})$, \, \, \, \,   for $p>1$.} & $C^{4}_{b, \text{Lip}} ( (\bR^d)^{p+1} )$ & \, \,  $C^{4}_{b, \text{Lip}} ( (\bR^d)^{p+1} )$ & \, $\cM_{4} $ \\
\hline
\end{tabular} 
\end{table}
\end{center}
The following theorems are the two main results concerning the antithetic MLMC estimator for interacting particle systems. The first theorem gives an analogue of Theorem \ref{anti initial intro} in the case of interacting particle systems. 
\begin{theorem}[Theorem \ref{variance antithetic non-discretised}, variance of antithetic difference] 
Assume \eqref{eq:Int}. Suppose that $b, \sigma \in \cM_4 \big( \bR^d  \times \mathcal{P}_2(\bR^d) \big) $ and $\Phi \in \cM_4 \big(  \mathcal{P}_2(\bR^d) \big) $.  Then 
$$\text{Var} \Big[ \Phi (\twonlaw[T]) -  \frac{1}{2} \big( \Phi (\twonlawone[T]) + \Phi (\twonlawtwo[T]) \big) \Big] \leq   \frac{C}{N^2}, $$ 
where $C$ is a constant that depends on $\Phi$, $b$, $\sigma$ and $T$, but does not depend on $N$.
\end{theorem}
The following theorem gives a quantitative estimate on the order of interactions of the antithetic MLMC estimator. 
\begin{theorem}[Theorem \ref{complexity no time discret}]  
Assume \eqref{eq:Int}. Suppose that $b$ and $\sigma$ are of the forms \eqref{pth order b} and \eqref{pth order sigma} respectively. Furthermore, suppose that $b, \sigma \in \cM_4 \big( \bR^d  \times \mathcal{P}_2(\bR^d) \big) $ and $\Phi \in \cM_4 \big(  \mathcal{P}_2(\bR^d) \big) $.  Then there exist constants $C_1, C_2>0$ such that for any $\eps < e^{-1}$, there exist a value $L$ and a sequence $\{ M_{\ell} \}_{\ell=0}^L$ such that the mean-square error of $ {\cA}^{\text{A- MLMC}}$ $($given by \eqref{eq AMLMC}$)$ is bounded by
$$  \bE \big[ \big(  {\cA}^{\text{A- MLMC}} - \Phi(\mclaw[T]) \big)^2 \big]  \leq C_1 \eps^2 $$ 
and the order of interactions of $ {\cA}^{\text{A- MLMC}}$  is bounded by 
\[ \text{\emph{Order of interactions}} \, \big(  {\cA}^{\text{A- MLMC}} \big) \leq        \begin{cases} 
      C_2 \eps^{-2} (\log \eps)^2, & p=1, \\
      C_2 \eps^{-1-p}, & p>1.
   \end{cases}
\]
\end{theorem}
For practical purposes, time discretisation is generally needed to simulate SDEs. We consider the time discretisation of \eqref{eq:MVSDE}, as  in seminal papers by Bossy and Talay \cite{bossy1996convergence,bossy1997stochastic}, by working with an Euler scheme. Take partition $\{t_k\}_k$ of $[0,T]$, with $t_k-t_{k-1}=h$ and define $\eta{(t)} := t_k\, \text{if}~t\in[t_k,t_{k+1})$. The continuous Euler scheme reads 
\begin{equation}
Z^{i,N,h}_{t}  = Z^{i,N,h}_{t_{k}} +  b(Z^{i,N,h}_{\eta{(t)}},\mu^{Z,N,h}_{\eta{(t)}} ) (t-t_k)
+ \sigma(Z^{i,N,h}_{\eta{(t)}},\mu^{Z,N,h}_{\eta{(t)}} )  (  W^i_{t} - W^i_{{t_{k}}}). \label{eq:Euler}
\end{equation} 
Section 6 extends the antithetic MLMC estimator to include time discretisation (along with its complexity analysis), so as to be implementable. The numerical simulations of the algorithm in Section 6 can be found in \cite{haji2018multilevel}.

\subsection{Outline of the paper} \label{section notations} 

Here is an outline of the main  results of the article. 

Section 3 establishes the result on the complexity of A-MLMC in Section \ref{complexity iid subsection} by proving Theorem \ref{antithetic initial separation}, which concerns the antithetic difference in the i.i.d. case. Theorem \ref{antithetic initial separation} generalises the result in \cite{giles2015multilevel} (Section 9) from functionals in measure of the form \eqref{eq: giles antithetic special form} to general functionals in measure, which ultimately allows us to prove the complexity result:  Theorem \ref{thm A mlmc complexity}. 

Subsequently, in Section 4, Theorem \ref{strong error} proves a dimension-independent rate of uniform strong propagation of chaos for sufficiently smooth drift and diffusion functions. This is a considerable generalisation from \cite{sznitman1991topics}, which assumes the drift and diffusion functions to be linear in measure.

In Section 5, we show from Theorem \ref{complexity no time discret}  that, under sufficient regular conditions on $b$ and $\sigma$ (having forms \eqref{pth order b} and \eqref{pth order sigma} respectively), the order of interactions of $ {\cA}^{\text{A- MLMC}}$  (given by \eqref{eq AMLMC}) is bounded by $C\eps^{-2} (\log \eps)^2$, for $p=1$; $C \eps^{-1-p}$, for $p >1$. 

Finally, in Section 6, we apply an Euler time-discretisation to the A-MLMC method. In Theorem \ref{MLMC time discret complexity}, we show that, under sufficient regular conditions on $b$ and $\sigma$ (having forms \eqref{pth order b} and \eqref{pth order sigma} respectively), the computational complexity of estimator \eqref{eq: MLMC def }  is bounded by $ C \eps^{-2-p}$, where $p \geq 1$, which is still a considerable improvement compared to direct Monte-Carlo simulation. 

Since this work relies heavily on the theory of differentiation in measure developed by P$.$ Lions in his course at Coll\`{e}ge de France \cite{lions2014cours}, the reader is directed to Appendices \ref{ linear functional derivatives and L-derivatives appendix}  and \ref{appendix weak error analysis}  for further details.

\section{A-MLMC for i.i.d. random variables} \label{sec iid}
The main result in this section is Theorem \ref{antithetic initial separation}, from which we can prove  Theorem \ref{thm A mlmc complexity}. We begin this section with the following lemma on the $W_2$ metric.
\begin{lemma} \label{W2 lemma} 
Let $\eta \in \bR^d $ and $m \in \cP_2(\bR^d)$. Then
$$ W_2 \Big( \frac{1}{N} \delta_{\eta} + \frac{N-1}{N} m,m \Big)^2 \leq \frac{2}{N} \bigg( |\eta|^2 + \int_{\bR^d} |x|^2 \, m(dx) \bigg).$$ 
\end{lemma}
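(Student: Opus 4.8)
The plan is to bound $W_2$ from above by exhibiting a single admissible coupling between $\nu := \frac{1}{N}\delta_{\eta} + \frac{N-1}{N}m$ and $m$, and then estimating its transport cost. Recall that
$$
W_2(\nu,m)^2 = \inf_{\pi \in \Pi(\nu,m)} \int_{\bR^d \times \bR^d} |x-y|^2 \, \pi(dx,dy),
$$
so that any particular $\pi \in \Pi(\nu,m)$ already furnishes an upper bound; there is no need to identify the optimal transport plan.

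First I would split the mass of $\nu$ according to its two constituents. The component $\frac{N-1}{N}m$ should be transported to itself along the diagonal at zero cost, while the atom $\frac{1}{N}\delta_{\eta}$ must be matched against the remaining $\frac{1}{N}m$ of the target. Concretely, I would set
$$
\pi := \frac{N-1}{N}\,(\mathrm{id},\mathrm{id})_{\#} m \;+\; \frac{1}{N}\,\delta_{\eta} \otimes m.
$$
A direct inspection of the marginals confirms that $\pi \in \Pi(\nu,m)$: its first marginal equals $\frac{N-1}{N}m + \frac{1}{N}\delta_{\eta} = \nu$, and its second marginal equals $\frac{N-1}{N}m + \frac{1}{N}m = m$, as required.

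It then remains to compute the cost of this plan. The diagonal part contributes nothing, so that
$$
\int_{\bR^d \times \bR^d} |x-y|^2 \, \pi(dx,dy) = \frac{1}{N}\int_{\bR^d} |\eta - x|^2 \, m(dx).
$$
Applying the elementary inequality $|\eta - x|^2 \leq 2|\eta|^2 + 2|x|^2$ and using that $m$ is a probability measure (so that $\int_{\bR^d} m(dx) = 1$) yields
$$
\frac{1}{N}\int_{\bR^d} |\eta - x|^2 \, m(dx) \leq \frac{2}{N}\bigg( |\eta|^2 + \int_{\bR^d} |x|^2 \, m(dx) \bigg),
$$
which is precisely the claimed bound. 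I do not expect any genuine obstacle here: the only point requiring minor care is the bookkeeping of the marginals of the mixture coupling, and this becomes immediate once the diagonal/product split above is written down explicitly.
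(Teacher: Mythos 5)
Your proof is correct and is essentially the same as the paper's: the paper constructs the coupling probabilistically via a random variable $X$ that equals $Y\sim m$ on an independent event of probability $\tfrac{N-1}{N}$ and equals $\eta$ otherwise, and the joint law of $(X,Y)$ is exactly your mixture $\tfrac{N-1}{N}(\mathrm{id},\mathrm{id})_{\#}m+\tfrac{1}{N}\delta_{\eta}\otimes m$. The cost computation and the elementary inequality $|\eta-x|^2\le 2|\eta|^2+2|x|^2$ then coincide in both arguments.
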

\begin{proof}
Let $Y$ be a random variable with law $m$ and let $\Omega' \in \cF$ be a measurable event that is independent of $\sigma(Y)$, with probability $\frac{N-1}{N}.$ Let $X$ be a random variable defined by
\[ X(\omega) := \begin{cases} 
      Y(\omega),  & \omega \in \Omega', \\
           \eta,  & \omega \not\in \Omega'.
   \end{cases}
\]
Then the law of $X$ is $\frac{1}{N} \delta_{\eta} + \frac{N-1}{N} m$. Therefore, by the definition of the 2-Wasserstein metric,
\begin{eqnarray}
 W_2 \Big( \frac{1}{N} \delta_{\eta} + \frac{N-1}{N} m,m \Big)^2  & \leq & \bE \big[ |X-Y|^2 \big] \nonumber \\
 & = & \bE \big[ |X-Y|^2 \big| \Omega' \big] \bP(\Omega') + \bE \big[ |X-Y|^2 \big| (\Omega')^{c} \big] \bP((\Omega')^{c} ) \nonumber \\
 & = & \frac{1}{N} \bE[ |\eta - Y|^2] \nonumber \\
 & \leq & \frac{2}{N} ( |\eta|^2 + \bE[|Y|^2]). \nonumber 
\end{eqnarray}
\end{proof}
For any functional from $\cP_2(\bR^d)$ to $\bR$, the following lemma gives a bound on the error between the value of empirical measures under the functional and its limiting law under the functional. It relies on the regularity conditions stipulated in Proposition \ref{theorem lions linear functional}.  The proof of the following lemma is similar to Lemma 5.10 in \cite{delarue2019master}. However, the following result is slightly different in terms of hypotheses, as the first and second order linear functional derivatives are only of linear and quadratic growth respectively (Proposition \ref{theorem lions linear functional}), whereas they are assumed to be uniformly bounded and $W_1$-Lipschitz continuous in Lemma 5.10 of \cite{delarue2019master}. In return, we require a much higher moment (12 v.s. 4 in Lemma 5.10 of \cite{delarue2019master}). The following result is stated in a way with a constant that does not depend on the functional of measure, nor on the limiting law, so that it is useful with the relevant conditioning argument in the proof of Proposition \ref{important proposition lemma strong error}. The technique of the following proof is also adopted in the proof of Theorem \ref{antithetic initial separation}.
\begin{lemma} \label{delarue theorem modified} 
Let $U \in \cM_3(\cP_2(\bR^d))$. Let $m_0 \in \cP_{12}(\bR^d)$ and $m^N= \frac{1}{N} \sum_{i=1}^N \delta_{{\zeta}_i}$, where ${\zeta}_1, \ldots, {\zeta}_N$ are i.i.d samples with law $m_0$. Then there exists a constant $C>0$ (which does not depend on $U$, ${\zeta}_1, \ldots, {\zeta}_N$ and $m_0$) such that  
$$ \bE \big[ \big| U( m^N)- U(m_0) \big|^4 \big] \leq \frac{C}{N^2} \prod_{i=1}^3 \Big( 1+ \| \partial^i_{\mu} U \|^4_{\infty}  \Big) \bigg( 1+ \int_{\bR^d} |x|^{12} \, m_0 (dx) \bigg) .$$ 
\end{lemma}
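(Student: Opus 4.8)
The plan is to Taylor-expand $U$ about the limiting law $m_0$ in the linear functional derivative to second order, keeping an explicit third-order integral remainder, and then to estimate the fourth moment of each term separately. Writing $\Delta^N := m^N - m_0 = \frac1N\sum_{i=1}^N(\delta_{\zeta_i}-m_0)$ and $\mu_t := m_0 + t\,\Delta^N$, I would use
\[
U(m^N) - U(m_0) = T_1 + T_2 + R_3,
\]
where $T_1 = \int \frac{\delta U}{\delta m}(m_0)(x)\,\Delta^N(dx)$, $T_2 = \frac12\int\int\frac{\delta^2 U}{\delta m^2}(m_0)(x,y)\,\Delta^N(dx)\Delta^N(dy)$, and $R_3 = \frac12\int_0^1 (1-t)^2\int\int\int\frac{\delta^3 U}{\delta m^3}(\mu_t)(x,y,z)\,\Delta^N(dx)\Delta^N(dy)\Delta^N(dz)\,dt$. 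Throughout, I would invoke Proposition \ref{theorem lions linear functional} to bound the polynomial growth of the $i$-th linear functional derivative (degree $i$) by $\|\partial^i_\mu U\|_\infty$, which is what keeps every estimate dimension-free and uniform in $U$, $m_0$ and the samples, as required for the conditioning argument in Proposition \ref{important proposition lemma strong error}.

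The first two terms are routine. For $T_1 = \frac1N\sum_{i=1}^N \psi(\zeta_i)$ with $\psi(x) = \frac{\delta U}{\delta m}(m_0)(x) - \int\frac{\delta U}{\delta m}(m_0)(y)\,m_0(dy)$ a centred, linearly growing function of i.i.d. samples, the elementary fourth-moment identity for sums of centred i.i.d. variables gives $\bE[T_1^4]\le \frac{C}{N^2}(\bE[\psi^2])^2 + \frac{C}{N^3}\bE[\psi^4] = O(N^{-2})$, using $\int|x|^4 m_0(dx)<\infty$. Centring both slots turns $T_2$ into a degenerate $V$-statistic $\frac{1}{2N^2}\sum_{i,j}\tilde g(\zeta_i,\zeta_j)$ whose kernel satisfies $\bE[\tilde g(\zeta_i,\zeta_j)\mid \zeta_j]=0$ for $i\ne j$; splitting off the diagonal and expanding the fourth power, degeneracy annihilates every index configuration except those in which the indices pair up, leaving $O(N^4)$ surviving terms out of $N^8$, so $\bE[T_2^4]=O(N^{-4})$, better than needed. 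The moments of $\tilde g$ are finite because the quadratic growth of $\frac{\delta^2 U}{\delta m^2}$ is integrated against $m_0\in\cP_{12}(\bR^d)$.

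The remainder $R_3$ is the crux. Expanding $\Delta^N$ gives a triple sum $\frac{1}{N^3}\sum_{i,j,k}H_{ijk}$, and the hard point is that the kernel $\frac{\delta^3 U}{\delta m^3}(\mu_t)$ depends on \emph{all} the particles, so $R_3$ is not a genuine degenerate $U$-statistic; indeed the naive bound by absolute values only yields $O(1)$, since the total variation of $\Delta^N$ does not vanish, and the cancellation among the three copies of $\Delta^N$ is indispensable. The plan is to freeze the kernel, writing $\frac{\delta^3 U}{\delta m^3}(\mu_t) = \frac{\delta^3 U}{\delta m^3}(m_0) + \big[\frac{\delta^3 U}{\delta m^3}(\mu_t)-\frac{\delta^3 U}{\delta m^3}(m_0)\big]$. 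The frozen part is a centred third-order $V$-statistic with a deterministic kernel, whose fourth moment is $O(N^{-3})$ by the same index-coincidence count as for $T_2$. For the fluctuation part one must retain the multilinear cancellation in the three copies of $\Delta^N$ — crucially, it cannot be bounded through $W_2(\mu_t,m_0)$ without reintroducing dimension dependence — so I would control it by a conditioning / leave-one-particle-out argument relying only on the $L^\infty$-bound on $\partial^3_\mu U$, the cubic growth of $\frac{\delta^3 U}{\delta m^3}$, and the moment $\int|x|^{12} m_0(dx)$. This is exactly where $m_0\in\cP_{12}(\bR^d)$ is used (the fourth power of a cubic-growth kernel produces the weight $|x|^{12}$) and where the factor $1+\|\partial^3_\mu U\|^4_\infty$ is generated. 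Collecting $\bE[R_3^4]=O(N^{-2})$ with the bounds for $T_1,T_2$ via $(a+b+c)^4\le 27(a^4+b^4+c^4)$, and tracking the three derivative norms, yields the stated estimate.

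The step I expect to be the main obstacle is precisely the fluctuation part of $R_3$: handling the particle-dependence of the third-derivative kernel while preserving the cancellation that produces the $N^{-2}$ rate with no dimensional penalty, since any estimate that passes through a Wasserstein distance between $\mu_t$ and $m_0$ would spoil the dimension independence that is the whole point of the lemma.
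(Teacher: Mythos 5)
Your decomposition is genuinely different from the paper's and, as written, the key step cannot be completed under the hypothesis $U \in \cM_3$. The paper does \emph{not} Taylor-expand to second order: it uses only the exact first-order representation $U(m^N)-U(m_0)=\frac1N\sum_i\int_0^1\varphi^i_\lambda\,d\lambda$, with $\varphi^i_\lambda$ the centred first derivative evaluated along the interpolation path, and then spends the remaining two orders of differentiability ($\frac{\delta^2 U}{\delta m^2}$ and $\frac{\delta^3 U}{\delta m^3}$) on the leave-particles-out argument that decouples the kernel from the empirical measure: removing $3$ (resp.\ $4$) particles from the measure argument costs one (resp.\ two) further functional derivatives, after which the leave-out kernels are conditionally centred and the off-diagonal sums vanish. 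Your $T_1$ and $T_2$ estimates are fine (indeed $\bE[T_2^4]=O(N^{-4})$, and the frozen part of $R_3$ is even smaller than your $O(N^{-3})$ claim), but you have pushed the expansion one order too far: the remainder kernel $\frac{\delta^3 U}{\delta m^3}(\mu_t)$ sits at the \emph{maximal} available derivative order. The ``conditioning / leave-one-particle-out argument'' you invoke for its fluctuation requires quantifying $\frac{\delta^3U}{\delta m^3}(\mu_t)-\frac{\delta^3U}{\delta m^3}(\mu_t^{-(\cdots)})$, which by the very definition of linear functional derivatives is an integral of $\frac{\delta^4 U}{\delta m^4}$ --- not assumed to exist. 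Without it you are left with exactly the two dead ends you yourself identify: the crude absolute-value bound gives $O(1)$, and the $W_2$-Lipschitz bound on $\partial^3_\mu U$ reintroduces the dimension-dependent rate of $\bE[W_2(m^N,m_0)^4]$.

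The fix is to retreat to the first-order representation (the paper's route). The general budget is: if the remainder kernel is the $k$-th derivative along the interpolation path, the leave-out argument consumes derivatives of orders $k+1$ and $k+2$ (compare Theorem \ref{antithetic initial separation}, which starts from a second-order remainder and accordingly assumes $U\in\cM^L_4$); with $U\in\cM_3$ only $k=1$ is admissible. Everything else in your outline --- the centring, the index-coincidence counts, the role of $\cP_{12}$ in absorbing the fourth power of cubic-growth kernels, and the insistence on avoiding Wasserstein bounds --- matches the paper's strategy.
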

\begin{proof}
In this proof, $C$ denotes an absolute constant that does not depend on $U$, ${\zeta}_1, \ldots, {\zeta}_N$ and $m_0$, whose value may vary from line to line. By the definition of linear functional derivatives, we have
\begin{eqnarray}
U( m^N)- U(m_0) & = & \int_0^1 \int_{\bR^d}   \frac{\delta U}{\delta m}(\lambda m^N + (1- \lambda) m_0,v) \, (m^N-m_0) (dv) \, d \lambda \nonumber \\
& = & \frac{1}{N} \sum_{i=1}^N \int_0^1 \varphi^i_{\lambda} \, d \lambda, \nonumber 
\end{eqnarray}
where, for $i \in \{1, \ldots, N\}$ and $\lambda \in [0,1]$,
\begin{equation}
\varphi^i_{\lambda}  =   \frac{\delta U}{\delta m}(\lambda m^N + (1- \lambda) m_0,{\zeta}_i)  -   \widetilde{\bE} \bigg[ \frac{\delta U}{\delta m}(\lambda m^N + (1- \lambda) m_0,\tilde{{\zeta}}) \bigg]. \label{eq: varphi i ell def} 
\end{equation}
By the bound on $\frac{\delta U}{\delta m}$ in Proposition \ref{theorem lions linear functional}, we know that for distinct $i, j \in \{1, \ldots, N\},$
\begin{equation} \bE \big[ (\varphi^i_{\lambda})^4 + (\varphi^i_{\lambda})^2 (\varphi^j_{\lambda})^2 + \varphi^i_{\lambda} (\varphi^j_{\lambda})^3 \big] \leq C  \| \pmu U \|^4_{\infty} \bE[|\zeta_1|^4].   \label{ square integ phi } \end{equation}
We have the estimate 
\begin{eqnarray} \bE \big[ \big| U( m^N)- U(m_0) \big|^4 \big] & \leq & \frac{1}{N^4} \int_0^1  \bE \bigg[ \bigg(  \sum_{i=1}^N \varphi^i_{\lambda} \bigg)^4 \bigg] \, d \lambda \nonumber \\
& \leq &  C \bigg( \frac{1}{N^2} \| \pmu U \|^4_{\infty} \bE[|\zeta_1|^4] \nonumber \\
&& + \frac{1}{N^4} \int_0^1 \bE \bigg[   \sum_{i_1, i_2, i_3 \text{ distinct }}
\varphi^{i_1}_{\lambda} \varphi^{i_2}_{\lambda}
(\varphi^{i_3}_{\lambda})^2 + \sum_{\substack{i_1, i_2, i_3, i_4 \\ \text{ distinct }}}  \varphi^{i_1}_{\lambda} \varphi^{i_2}_{\lambda} \varphi^{i_3}_{\lambda} \varphi^{i_4}_{\lambda} \bigg] \, d \lambda \bigg) . \nonumber \\
&& \label{eq: S2 overall bound} \end{eqnarray}
For any distinct $i_1, i_2, i_3$, we define $m^{N,-(i_1,i_2,i_3)}:= \frac{1}{N-3} \sum_{\ell \neq i_1,i_2,i_3} \delta_{{\zeta}_{\ell}}$, which implies that 
$$ m^N- m^{N,-(i_1,i_2,i_3)}= \frac{1}{N} (\delta_{{\zeta}_{i_1}} + \delta_{{\zeta}_{i_2}} + \delta_{{\zeta}_{i_3}})  - \frac{3}{N(N-3)} \sum_{\ell \neq i_1, i_2,i_3} \delta_{{\zeta}_{\ell}} .$$ 
By the definition of second-order linear functional derivatives, we observe that
\begin{eqnarray}
& & \frac{\delta U}{\delta m}(\lambda m^N + (1- \lambda) m_0,{\zeta}_i) - \frac{\delta U}{\delta m}(\lambda m^{N,-(i_1,i_2,i_3)} + (1- \lambda) m_0,{\zeta}_i) \nonumber \\
& = & \int_0^1 \int_{\bR^d} \frac{\delta^2 U}{\delta m^2} \Big( s \lambda m^N + (1-s) \lambda m^{N,-(i_1,i_2,i_3)} + (1- \lambda) m_0 , {\zeta}_i, v \Big) \, (m^N- m^{N,-(i_1,i_2,i_3)})(dv) \, ds \nonumber \\
& = & \int_0^1 \frac{1}{N} \bigg[ \sum_{\ell=i_1, i_2,i_3} \frac{\delta^2 U}{\delta m^2} \Big( s \lambda m^N + (1-s) \lambda m^{N,-(i_1,i_2,i_3)} + (1- \lambda) m_0 , {\zeta}_i, {\zeta}_{\ell} \Big) \nonumber \\
&& - \frac{3}{N-3} \sum_{\ell \neq i_1, i_2,i_3} \frac{\delta^2 U}{\delta m^2} \Big( s \lambda m^N + (1-s) \lambda m^{N,-(i_1,i_2,i_3)} + (1- \lambda) m_0 , {\zeta}_i, {\zeta}_{\ell} \Big) \bigg] \, ds. \label{eq: i3 expansion} 
\end{eqnarray} 
By the bound on $\frac{\delta^2 U}{\delta m^2}$ in Proposition \ref{theorem lions linear functional},
\begin{eqnarray}
&& \bE \bigg| \frac{\delta U}{\delta m}(\lambda m^N + (1- \lambda) m_0,{\zeta}_i) - \frac{\delta U}{\delta m}(\lambda m^{N,-(i_1,i_2,i_3)} + (1- \lambda) m_0,{\zeta}_i)  \bigg|^4 \leq \frac{C}{N^4} \| \ptwomu U \|^4_{\infty} \bE[|\zeta_1|^8] .\nonumber  \end{eqnarray} 
Similarly, by applying the same argument to the second term in \eqref{eq: varphi i ell def}, we obtain that
\begin{eqnarray}
&&  \bE \bigg| \widetilde{\bE} \bigg[ \frac{\delta U}{\delta m}(\lambda m^N + (1- \lambda) m_0,\tilde{{\zeta}}) \bigg] - \widetilde{\bE} \bigg[ \frac{\delta U}{\delta m}(\lambda m^{N,-(i_1,i_2,i_3)} + (1- \lambda) m_0,\tilde{{\zeta}}) \bigg] \bigg|^4 \leq \frac{C}{N^4} \| \ptwomu U \|^4_{\infty} \bE[|\zeta_1|^8] ,  \nonumber 
\end{eqnarray} 
which implies that
\begin{equation}  \bE | \varphi^i_{\lambda} - \varphi^{i,-(i_1, i_2,i_3)}_{\lambda} |^4 \leq \frac{C}{N^4} \| \ptwomu U \|^4_{\infty} \bE[|\zeta_1|^8], \label{eq: difference phi i lambda bound} \end{equation} 
where
\begin{equation}
\varphi^{i,-(i_1, i_2,i_3)}_{\lambda}  =  \frac{\delta U}{\delta m}(\lambda m^{N,-(i_1,i_2,i_3)} + (1- \lambda) m_0,{\zeta}_i) - \widetilde{\bE} \bigg[ \frac{\delta U}{\delta m}(\lambda m^{N,-(i_1,i_2,i_3)} + (1- \lambda) m_0,\tilde{{\zeta}})  \bigg]. \label{eq: removing three particles} 
\end{equation}
Finally, by writing $ \varphi^i_{\lambda} = ( \varphi^i_{\lambda} -  \varphi^{i,-(i_1, i_2,i_3)}_{\lambda} ) + \varphi^{i,-(i_1, i_2,i_3)}_{\lambda}  $ and applying the generalised H\"{o}lder's inequality  to \eqref{ square integ phi } and \eqref{eq: difference phi i lambda bound}, 
\begin{eqnarray}
&& \sum_{i_1, i_2, i_3 \text{ distinct }} \bE \Big[ 
\varphi^{i_1}_{\lambda} \varphi^{i_2}_{\lambda}
(\varphi^{i_3}_{\lambda})^2 \Big] \nonumber \\
& \leq &  \sum_{i_1, i_2, i_3 \text{ distinct }} \bigg[  \frac{C}{N} (1+  \| \pmu U \|^4_{\infty}) (1+  \| \ptwomu U \|^4_{\infty}) \bE[|\zeta_1|^8] + \bE \big[ \varphi^{i_1,-(i_1, i_2,i_3)}_{\lambda} \varphi^{i_2,-(i_1, i_2,i_3)}_{\lambda} \big( \varphi^{i_3,-(i_1, i_2,i_3)}_{\lambda} \big)^2 \big]  \bigg] \nonumber \\
& \leq & CN^2 (1+  \| \pmu U \|^4_{\infty}) (1+  \| \ptwomu U \|^4_{\infty}) \bE[|\zeta_1|^8] + \sum_{\substack{i_1, i_2, i_3 \\ \text{ distinct }}} \bE \big[ \varphi^{i_1,-(i_1, i_2,i_3)}_{\lambda} \varphi^{i_2,-(i_1, i_2,i_3)}_{\lambda} \big( \varphi^{i_3,-(i_1, i_2,i_3)}_{\lambda} \big)^2 \big].  \nonumber \\
&& \label{eq: three terms fourth order holder's argument} 
\end{eqnarray} 
Let $\cF^{-i}$ be the $\sigma$-algebra generated by $\zeta_1, \ldots, \zeta_N$ except $\zeta_i$. Since ${\zeta}_1, \ldots, {\zeta}_N$ are independent, for any distinct $i_1, i_2,i_3$, 
\begin{equation}
   \bE \big[ \varphi^{i_1,-(i_1, i_2,i_3)}_{\lambda} \varphi^{i_2,-(i_1, i_2,i_3)}_{\lambda} \big( \varphi^{i_3,-(i_1, i_2,i_3)}_{\lambda} \big)^2 \big] =    \bE \big[ \varphi^{i_2,-(i_1, i_2,i_3)}_{\lambda} \big( \varphi^{i_3,-(i_1, i_2,i_3)}_{\lambda} \big)^2  \bE \big[  \varphi^{i_1,-(i_1, i_2,i_3)}_{\lambda} \big| \cF^{-i_1} \big] \big] =0, \label{eq:indep conditioning} 
\end{equation}
which implies that
\begin{equation} \sum_{i_1, i_2, i_3 \text{ distinct }} \bE \Big[ 
\varphi^{i_1}_{\lambda} \varphi^{i_2}_{\lambda}
(\varphi^{i_3}_{\lambda})^2 \Big] \leq  CN^2 (1+  \| \pmu U \|^4_{\infty}) (1+  \| \ptwomu U \|^4_{\infty}) \bE[|\zeta_1|^8]. \label{i3 overall bound} \end{equation} 
Next, we define analogously the notation $\varphi^{i,-(i_1, i_2,i_3,i_4)}$ as \eqref{eq: removing three particles}. As above,
by applying the generalised H\"{o}lder's inequality  to \eqref{ square integ phi } and \eqref{eq: difference phi i lambda bound}, followed by a similar reasoning as \eqref{eq:indep conditioning}, we have 
\begin{eqnarray}
&& \sum_{\substack{i_1, i_2, i_3,i_4 \\ \text{ distinct }}} \bE \Big[ 
\varphi^{i_1}_{\lambda} \varphi^{i_2}_{\lambda} \varphi^{i_3}_{\lambda} \varphi^{i_4}_{\lambda}
 \Big] \nonumber \\
& \leq &  \sum_{\substack{i_1, i_2, i_3,i_4 \\ \text{ distinct }}} \Bigg[  \frac{C}{N^2} (1+  \| \pmu U \|^4_{\infty}) (1+  \| \ptwomu U \|^4_{\infty}) \bE[|\zeta_1|^8] + \bE \bigg[ \sum_{j=1}^4 \big( \varphi^{i_j}_{\lambda} - \varphi^{i_j,-(i_1, i_2,i_3,i_4)}_{\lambda}) \prod_{\substack{k=1 \\ k \neq j}}^4 \varphi^{i_k,-(i_1, i_2,i_3,i_4)}_{\lambda} \bigg] \nonumber \\
&& + \bE \big[ \varphi^{i_1,-(i_1, i_2,i_3,i_4)}_{\lambda} \varphi^{i_2,-(i_1, i_2,i_3,i_4)}_{\lambda}  \varphi^{i_3,-(i_1, i_2,i_3,i_4)}_{\lambda} \varphi^{i_4,-(i_1, i_2,i_3,i_4)}_{\lambda} \big] \Bigg] \nonumber \\
& \leq & CN^2 (1+  \| \pmu U \|^4_{\infty}) (1+  \| \ptwomu U \|^4_{\infty}) \bE[|\zeta_1|^8] + \sum_{\substack{i_1, i_2, i_3,i_4 \\ \text{ distinct }}} \bE \bigg[\sum_{j=1}^4 \big( \varphi^{i_j}_{\lambda} - \varphi^{i_j,-(i_1, i_2,i_3,i_4)}_{\lambda}) \prod_{\substack{k=1 \\ k \neq j}}^4 \varphi^{i_k,-(i_1, i_2,i_3,i_4)}_{\lambda}  \bigg].  \nonumber \\
&& \label{eq: four terms fourth order holder's argument} 
\end{eqnarray} 
Note that \eqref{eq: difference phi i lambda bound} only gives a growth in the order of $O(N^3)$ for the final term in \eqref{eq: four terms fourth order holder's argument}, therefore it is insufficient. 

By \eqref{eq: i3 expansion} followed by an application of the definition of third order linear functional derivatives, we have
\begin{eqnarray}
&&  \frac{\delta U}{\delta m}(\lambda m^N + (1- \lambda) m_0,{\zeta}_i) -   \frac{\delta U}{\delta m}(\lambda m^{N,-(i_1,i_2,i_3,i_4)} + (1- \lambda) m_0,{\zeta}_i) \nonumber \\
& = &  \frac{1}{N} \bigg[ \sum_{\ell=i_1, i_2,i_3,i_4} \frac{\delta^2 U}{\delta m^2} \Big(  \lambda m^{N,-(i_1,i_2,i_3,i_4)} + (1- \lambda) m_0 , {\zeta}_i, {\zeta}_{\ell} \Big) \nonumber \\
&& - \frac{4}{N-4} \sum_{\ell \neq i_1, i_2,i_3,i_4} \frac{\delta^2 U}{\delta m^2} \Big(  \lambda m^{N,-(i_1,i_2,i_3,i_4)} + (1- \lambda) m_0 , {\zeta}_i, {\zeta}_{\ell} \Big) \bigg]  + \varepsilon^{i,-(i_1, i_2, i_3, i_4)}_N, \label{ fourth order varepsilon third order linear functional derivative}
\end{eqnarray} 
where
\begin{eqnarray}
&& \varepsilon^{i,-(i_1, i_2, i_3, i_4)}_N \nonumber \\
& = & \int_0^1 \frac{s \lambda}{N^2} \Bigg[ \sum_{\ell=i_1, i_2,i_3,i_4}  \int_0^1  \bigg[ \sum_{\ell'=i_1, i_2,i_3,i_4} \frac{\delta^3 U}{\delta m^3} \Big( ts  \lambda m^N + (1-ts) \lambda  m^{N,-(i_1,i_2,i_3,i_4)} + (1- \lambda) m_0 , \nonumber \\
&& {\zeta}_i, {\zeta}_{\ell}, {\zeta}_{\ell'} \Big) - \frac{4}{N-4} \sum_{\ell' \neq i_1, i_2,i_3,i_4} \frac{\delta^3 U}{\delta m^3} \Big( ts  \lambda m^N + (1-ts) \lambda  m^{N,-(i_1,i_2,i_3,i_4)} + (1- \lambda) m_0 ,  {\zeta}_i, {\zeta}_{\ell}, {\zeta}_{\ell'} \Big) \bigg] \, dt   \nonumber \\
&& - \frac{4}{N-4} \sum_{\ell \neq i_1, i_2,i_3,i_4}  \int_0^1 \bigg[  \sum_{\ell'=i_1, i_2,i_3,i_4} \frac{\delta^3 U}{\delta m^3} \Big( ts  \lambda m^N + (1-ts) \lambda  m^{N,-(i_1,i_2,i_3,i_4)} + (1- \lambda) m_0 , \nonumber \\
&& {\zeta}_i, {\zeta}_{\ell}, {\zeta}_{\ell'} \Big)  - \frac{4}{N-4} \sum_{\ell' \neq i_1, i_2,i_3,i_4} \frac{\delta^3 U}{\delta m^3} \Big( ts  \lambda m^N + (1-ts) \lambda  m^{N,-(i_1,i_2,i_3,i_4)} + (1- \lambda) m_0 ,  {\zeta}_i, {\zeta}_{\ell}, {\zeta}_{\ell'} \Big) \bigg] \, dt \Bigg] \,ds,  \nonumber 
\end{eqnarray}
which implies that
$$ \bE|\varepsilon^{i,-(i_1, i_2, i_3, i_4)}_N|^4 \leq  \frac{C}{N^8} \| \pthreemu U \|^4_{\infty} \bE[|\zeta_1|^{12}], $$
by the bound on $\frac{\delta^3 U}{\delta m^3}$ in Proposition \ref{theorem lions linear functional}. Repeating the same argument to the other term in \eqref{eq: varphi i ell def} gives
\begin{eqnarray}
&& \varphi^{i}_{\lambda} - \varphi^{i,-(i_1, i_2,i_3,i_4)}_{\lambda} \nonumber \\
& = &  \int_{\bR^d} \frac{\delta^2 U}{\delta m^2} \Big(  \lambda m^{N,-(i_1,i_2,i_3,i_4)} + (1- \lambda) m_0 , {\zeta}_i, v \Big) \, (m^N- m^{N,-(i_1,i_2,i_3,i_4)})(dv)  \nonumber \\
&& - \tilde{\bE} \bigg[  \int_{\bR^d} \frac{\delta^2 U}{\delta m^2} \Big(  \lambda m^{N,-(i_1,i_2,i_3,i_4)} + (1- \lambda) m_0 , \tilde{\zeta}, v \Big) \, (m^N- m^{N,-(i_1,i_2,i_3,i_4)})(dv)  \bigg] + \tilde{\varepsilon}^{i,-(i_1, i_2, i_3, i_4)}_N, \nonumber 
\end{eqnarray}
where
\begin{equation} \bE|\tilde{\varepsilon}^{i,-(i_1, i_2, i_3, i_4)}_N|^4 \leq  \frac{C}{N^8} \| \pthreemu U \|^4_{\infty} \bE[|\zeta_1|^{12}].  \label{eps fourth moment bound}  \end{equation}
Note that we can write the difference $ \varphi^{i_1}_{\lambda} - \varphi^{i_1,-(i_1, i_2,i_3,i_4)}_{\lambda} - \tilde{\varepsilon}^{i_1,-(i_1, i_2, i_3, i_4)}_N $ as
$$ \varphi^{i_1}_{\lambda} - \varphi^{i_1,-(i_1, i_2,i_3,i_4)}_{\lambda} - \tilde{\varepsilon}^{i_1,-(i_1, i_2, i_3, i_4)}_N = \sum_{j=2}^4 F_j(\big( \zeta_r \big)_{r \neq i_1, \ldots, i_4}, \zeta_{i_1}, \zeta_{i_j}),$$
for some measurable functions $F_2, F_3, F_4:(\bR^d)^{N-2} \to \bR$. Therefore,
\begin{eqnarray}
&& \bE \bigg[\Big( \varphi^{i_1}_{\lambda} - \varphi^{i_1,-(i_1, i_2,i_3,i_4)}_{\lambda} - \tilde{\varepsilon}^{i_1,-(i_1, i_2, i_3, i_4)}_N \Big) \varphi^{i_2,-(i_1, i_2,i_3,i_4)}_{\lambda} \varphi^{i_3,-(i_1, i_2,i_3,i_4)}_{\lambda} \varphi^{i_4,-(i_1, i_2,i_3,i_4)}_{\lambda} \bigg] \nonumber \\
& = & \bE \bigg[\Big(\sum_{j \in \{3, 4 \}} F_j(\big( \zeta_r \big)_{r \neq i_1, \ldots, i_4}, \zeta_{i_1}, \zeta_{i_j}) \Big)  \varphi^{i_3,-(i_1, i_2,i_3,i_4)}_{\lambda} \varphi^{i_4,-(i_1, i_2,i_3,i_4)}_{\lambda} \bE \Big[ \varphi^{i_2,-(i_1, i_2,i_3,i_4)}_{\lambda} \Big| \cF^{-i_2} \Big] \bigg] \nonumber \\
& & + \bE \bigg[ F_2(\big( \zeta_r \big)_{r \neq i_1, \ldots, i_4}, \zeta_{i_1}, \zeta_{i_2}) \varphi^{i_2,-(i_1, i_2,i_3,i_4)}_{\lambda} \varphi^{i_4,-(i_1, i_2,i_3,i_4)}_{\lambda}  \bE \Big[ \varphi^{i_3,-(i_1, i_2,i_3,i_4)}_{\lambda} \Big| \cF^{-i_3} \Big] \bigg]=0. \nonumber 
\end{eqnarray}
Applying the generalised H\"{o}lder's inequality to \eqref{eps fourth moment bound} and \eqref{ square integ phi } gives
\begin{eqnarray} && \bE \bigg[\Big( \varphi^{i_1}_{\lambda} - \varphi^{i_1,-(i_1, i_2,i_3,i_4)}_{\lambda}\Big) \varphi^{i_2,-(i_1, i_2,i_3,i_4)}_{\lambda} \varphi^{i_3,-(i_1, i_2,i_3,i_4)}_{\lambda} \varphi^{i_4,-(i_1, i_2,i_3,i_4)}_{\lambda} \bigg] \nonumber \\
& \leq & \frac{C}{N^2} \| \pthreemu U \|_{\infty} \big( \bE[|\zeta_1|^{12}] \big)^{1/4}  \| \pmu U \|^3_{\infty} \big( \bE[|\zeta_1|^{4}] \big)^{3/4} \leq  \frac{C}{N^2} \Big( 1+\| \pmu U \|^4_{\infty} \Big) \Big( 1+\| \pthreemu U \|^4_{\infty} \Big) (1+ \bE[|\zeta_1|^{12}]). \nonumber  \end{eqnarray}
By the same reasoning, we can show that
\begin{equation}
\sum_{\substack{i_1, i_2, i_3,i_4 \\ \text{ distinct }}} \bE \bigg[\sum_{j=1}^4 \big( \varphi^{i_j}_{\lambda} - \varphi^{i_j,-(i_1, i_2,i_3,i_4)}_{\lambda}) \prod_{\substack{k=1 \\ k \neq j}}^4 \varphi^{i_k,-(i_1, i_2,i_3,i_4)}_{\lambda}  \bigg]  \leq C N^2 \Big( 1+\| \pmu U \|^4_{\infty} \Big) \Big( 1+\| \pthreemu U \|^4_{\infty} \Big) (1+ \bE[|\zeta_1|^{12}]). \label{eq: i4 final bound}
\end{equation}
We conclude the result by combining  \eqref{eq: S2 overall bound}, \eqref{i3 overall bound}, \eqref{eq: four terms fourth order holder's argument} and \eqref{eq: i4 final bound}.
\end{proof}


\begin{theorem}[Antithetic error on i.i.d. random variables]  \label{antithetic initial separation}  Suppose that $\mu \in \cP_8(\bR^d)$ and $ U \in \cM^L_4(\cP_2(\bR^d))$.
Then there exists a constant $C>0$ such that 
$$ \bE \big| U (\twonlaw[]) -  \frac{1}{2} \big( U (\twonlawone[]) + U (\twonlawtwo[]) \big) \big|^2 \leq \frac{C}{N^2}. $$
\end{theorem} 

\begin{proof}[Proof of theorem \ref{antithetic initial separation}]
For simplicity of notations, let 
$$\mu_{2N}:= \twonlaw[], \quad \mu_{2N,(1)}:= \twonlawone[], \quad \mu_{2N,(2)}:= \twonlawtwo[].$$
For every $t \in [0,1],$ let
$$ m^{2N}_t:= (1-t) \mu + t \mu_{2N}, \quad m^{2N,(1)}_t:= (1-t) \mu + t \mu_{2N,(1)}, \quad  m^{2N,(2)}_t:= (1-t) \mu + t \mu_{2N,(2)}. $$ 
We define 
\begin{align*}
[0,1] \ni t \mapsto f(t) = U \big((1-t)\mu + t \mu_{2N} \big) =  U \big(\mu + t (\mu_{2N}-\mu) \big) \in \bR
\end{align*}
and apply Taylor-Lagrange formula to $f$ up to order $2$, namely
\begin{align*}
f(1)-f(0) 
&=  f^{'}(0) + \int_0^1(1-t) f^{(2)}(t) \, d t.
\end{align*}
This yields
\begin{align}
U(\mu_{2N}) - U(\mu) = 
{\int_{\bR^{d}}\frac{\delta U}{\delta m}(\mu)(\mathbf{y})\,(\mu_{2N}-\mu) (d\mathbf{y})}
+  \int_0^1(1-t) \bigg[ {\int_{\bR^{2d}}\frac{\delta^{2} U}{\delta m^{2}}(m^{2N}_t)(\mathbf{y})\,(\mu_{2N}-\mu)^{\otimes 2} (d \mathbf{y})}  \bigg] \, dt. \label{antithetic taylor 2N}
\end{align}
Similarly, 
\begin{eqnarray}
U(\mu_{2N,(1)}) - U(\mu) & = & {\int_{\bR^{d}}\frac{\delta U}{\delta m}(\mu)(\mathbf{y})\,(\mu_{2N,(1)}-\mu) (d\mathbf{y})} \nonumber \\
&& +  \int_0^1(1-t) \bigg[ {\int_{\bR^{2d}}\frac{\delta^{2} U}{\delta m^{2}}(m^{2N,(1)}_t)(\mathbf{y})\,(\mu_{2N,(1)}-\mu)^{\otimes 2} (d \mathbf{y})}  \bigg] \, dt \label{antithetic taylor 2N,(1)}
\end{eqnarray}
and
\begin{eqnarray}
U(\mu_{2N,(2)}) - U(\mu) & = & {\int_{\bR^{d}}\frac{\delta U}{\delta m}(\mu)(\mathbf{y})\,(\mu_{2N,(2)}-\mu) (d\mathbf{y})} \nonumber \\
&& +  \int_0^1(1-t) \bigg[ {\int_{\bR^{2d}}\frac{\delta^{2} U}{\delta m^{2}}(m^{2N,(2)}_t)(\mathbf{y})\,(\mu_{2N,(2)}-\mu)^{\otimes 2} (d \mathbf{y})}  \bigg] \, dt. \label{antithetic taylor 2N,(2)}
\end{eqnarray}
Computing the difference of \eqref{antithetic taylor 2N} with the arithmetic average of \eqref{antithetic taylor 2N,(1)} and \eqref{antithetic taylor 2N,(2)} gives
\begin{eqnarray}
U (\mu_{2N}) -  \frac{1}{2} \big( U (\mu_{2N,(1)}) + U (\mu_{2N,(2)}) \big) & = & \int_0^1(1-t) \bigg[ {\int_{\bR^{2d}}\frac{\delta^{2} U}{\delta m^{2}}(m^{2N}_t)(\mathbf{y})\,(\mu_{2N}-\mu)^{\otimes 2} (d \mathbf{y})}  \bigg] \, dt \nonumber \\
&& - \frac{1}{2} \int_0^1(1-t) \bigg[ {\int_{\bR^{2d}}\frac{\delta^{2} U}{\delta m^{2}}(m^{2N,(1)}_t)(\mathbf{y})\,(\mu_{2N,(1)}-\mu)^{\otimes 2} (d \mathbf{y})}  \bigg] \, dt \nonumber \\
&& - \frac{1}{2} \int_0^1(1-t) \bigg[ {\int_{\bR^{2d}}\frac{\delta^{2} U}{\delta m^{2}}(m^{2N,(2)}_t)(\mathbf{y})\,(\mu_{2N,(2)}-\mu)^{\otimes 2} (d \mathbf{y})}  \bigg] \, dt. \nonumber \\
&& \label{eq antithetic second order derivatives expansion} 
\end{eqnarray}
The rest of the proof is very similar to the proof of Lemma \ref{delarue theorem modified}. It suffices to consider only the first term in \eqref{eq antithetic second order derivatives expansion}. The other two terms can be handled in a similar way. We rewrite
\begin{eqnarray}
  &&   {\int_{\bR^{2d}}\frac{\delta^{2} U}{\delta m^{2}}(m^{2N}_t)(\mathbf{y})\,(\mu_{2N}-\mu)^{\otimes 2} (d \mathbf{y})} \nonumber \\ 
  & = & {\int_{\bR^{d}} \bigg[ \frac{1}{2N} \sum_{i=1}^{2N} \frac{\delta^{2} U}{\delta m^{2}}(m^{2N}_t)(\xi_i,y_2) - \int_{\bR^d} \frac{\delta^{2} U}{\delta m^{2}}(m^{2N}_t)(z,y_2) \, \mu(dz) \bigg]  \,(\mu_{2N}-\mu) (d y_2)} \nonumber \\
  & = & \frac{1}{(2N)^2} \sum_{i,j=1}^{2N}  \frac{\delta^{2} U}{\delta m^{2}}(m^{2N}_t)(\xi_i,\xi_j) - \frac{1}{2N} \sum_{j=1}^{2N} \int_{\bR^d} \frac{\delta^{2} U}{\delta m^{2}}(m^{2N}_t)(z,\xi_j) \, \mu(dz)  \nonumber \\
  && - \frac{1}{2N} \sum_{i=1}^{2N} \int_{\bR^d} \frac{\delta^{2} U}{\delta m^{2}}(m^{2N}_t)(\xi_i,z) \, \mu(dz) + \int_{\bR^d} \int_{\bR^d} \frac{\delta^{2} U}{\delta m^{2}}(m^{2N}_t)(z,z') \, \mu(dz)  \, \mu(dz') \nonumber \\
  & = & \frac{1}{(2N)^2} \sum_{i,j=1}^{2N} \varphi^{(i,j)}_t, \label{eq: second order antithetic difference estimation one term no time discretisation}    \end{eqnarray}
  where
\begin{eqnarray}
   \varphi^{(i,j)}_t  & := &   \frac{\delta^{2} U}{\delta m^{2}}(m^{2N}_t)(\xi_i,\xi_j) -  \int_{\bR^d} \frac{\delta^{2} U}{\delta m^{2}}(m^{2N}_t)(z,\xi_j) \, \mu(dz)  \nonumber \\
  && -  \int_{\bR^d} \frac{\delta^{2} U}{\delta m^{2}}(m^{2N}_t)(\xi_i,z) \, \mu(dz) +  \int_{\bR^d} \int_{\bR^d} \frac{\delta^{2} U}{\delta m^{2}}(m^{2N}_t)(z,z') \, \mu(dz)  \, \mu(dz')  . \nonumber    \end{eqnarray}
 Next, we observe that
\begin{eqnarray}
 && \bE \bigg|\frac{1}{(2N)^2} \sum_{i,j=1}^{2N}  \varphi^{(i,j)}_t \bigg|^2 \nonumber \\
& \lesssim & \frac{1}{N^2} + \frac{1}{N^4} \Bigg[ \sum_{ \substack{i_1, j_1, i_2,j_2 \in \{1, \ldots, 2N \} \\ \text{exactly two of } i_1, j_1, i_2,j_2 \text{ are identical}} } \bE \Big[\varphi^{(i_1,j_1)}_t \varphi^{(i_2,j_2)}_t \Big] +  \sum_{ \substack{i_1, j_1, i_2,j_2 \in \{1, \ldots, 2N \} \\  i_1, j_1, i_2,j_2 \text{ are distinct}} } \bE \Big[\varphi^{(i_1,j_1)}_t \varphi^{(i_2,j_2)}_t \Big] \Bigg]. \nonumber \\
&& \label{eq: interpolation in measures phi i j expansion} 
\end{eqnarray}
  We first consider the case  where exactly two of $i_1, i_2, j_1,j_2$ are identical. Without loss of generality, suppose that $i_1=i_2 $. 
   As in the proof of Lemma \ref{delarue theorem modified}, we define
\begin{eqnarray}
  &&  \varphi^{(i,j),-(i_1,j_1, j_2)}_t  \nonumber \\
  & := &  \frac{\delta^{2} U}{\delta m^{2}}(m^{2N,-(i_1,j_1, j_2)}_t)(\xi_i,\xi_j)   -  \int_{\bR^d} \frac{\delta^{2} U}{\delta m^{2}}(m^{2N,-(i_1,j_1, j_2)}_t)(z,\xi_j) \, \mu(dz)  \nonumber \\
  && -   \int_{\bR^d} \frac{\delta^{2} U}{\delta m^{2}}(m^{2N,-(i_1,j_1, j_2)}_t)(\xi_i,z) \, \mu(dz)   +  \int_{\bR^d} \int_{\bR^d} \frac{\delta^{2} U}{\delta m^{2}}(m^{2N,-(i_1,j_1, j_2)}_t)(z,z') \, \mu(dz)  \, \mu(dz')  ,  \nonumber \\
  && \label{eq: varphi expressions antithetic}    \end{eqnarray}
  where
  $$ m^{2N,-(i_1,j_1, j_2)}_t:= (1-t) \mu + t \bigg[ \frac{1}{2N-3} \sum_{ \substack{1 \leq \ell  \leq 2N \\ \ell \not\in \{ i_1,j_1, j_2 \}}} \delta_{\xi_{\ell}} \bigg].$$ 
   By the same argument as in the proof of Lemma \ref{delarue theorem modified}, along with the bound on $\frac{\delta^3 U}{\delta m^3}$ in \eqref{eq: first order lions linear functional def} (see \eqref{eq: i3 expansion} for details), we have
  $$ \bE |\varphi^{(i,j)}_t - \varphi^{(i,j),-(i_1,j_1, j_2)}_t|^2 \lesssim \frac{1}{N^2}. $$ 
  Then, we write
\begin{eqnarray} 
\bE \Big[\varphi^{(i_1,j_1)}_t \varphi^{(i_1,j_2)}_t \Big]
& = & \bE \Big[(\varphi^{(i_1,j_1)}_t - \varphi^{(i_1,j_1),-(i_1,j_1, j_2)}_t) (\varphi^{(i_1,j_2)}_t- \varphi^{(i_1,j_2),-(i_1,j_1, j_2)}_t) \Big] \nonumber \\
&& + \bE\Big[(\varphi^{(i_1,j_1)}_t - \varphi^{(i_1,j_1),-(i_1,j_1, j_2)}_t) \varphi^{(i_1,j_2),-(i_1,j_1, j_2)}_t \Big] \nonumber \\ 
&& +\bE \Big[  \varphi^{(i_1,j_1),-(i_1,j_1, j_2)}_t (\varphi^{(i_1,j_2)}_t- \varphi^{(i_1,j_2),-(i_1,j_1, j_2)}_t) \Big] \nonumber \\
&& + \bE \Big[  \varphi^{(i_1,j_1),-(i_1,j_1, j_2)}_t \varphi^{(i_1,j_2),-(i_1,j_1, j_2)}_t \Big]. \nonumber 
\end{eqnarray}
By the Cauchy-Schwarz inequality and the bound on $\frac{\delta^2 U}{\delta m^2}$ in \eqref{eq: first order lions linear functional def}, the first three terms converge to $0$ in the order $O(1/N)$. Let $\cF^{-i}$ be the $\sigma$-algebra generated by $\xi_1, \ldots, \xi_N$ except $\xi_i$. Then 
$$\bE \Big[  \varphi^{(i_1,j_1),-(i_1,j_1, j_2)}_t \varphi^{(i_1,j_2),-(i_1,j_1, j_2)}_t \Big] =   \bE \Big[  \varphi^{(i_1,j_1),-(i_1,j_1, j_2)}_t \bE \Big[ \varphi^{(i_1,j_2),-(i_1,j_1, j_2)}_t \Big| \cF^{-j_2}\Big] \Big] =0. $$ 
Therefore,
\begin{equation} \frac{1}{N^4} \sum_{ \substack{i_1, j_1, i_2,j_2 \in \{1, \ldots, 2N \} \\ \text{exactly two of } i_1, j_1, i_2,j_2 \text{ are identical}} } \bE \Big[\varphi^{(i_1,j_1)}_t \varphi^{(i_2,j_2)}_t \Big] \lesssim \frac{1}{N^2}. \label{eq: antithetic interpolation first part }
\end{equation}
Finally, we consider the case where $i_1, j_1,i_2, j_2$ are mutually distinct.  We define $\varphi^{(i,j),-(i_1,j_1, i_2,j_2)}_t$ analogously, as the definition of $\varphi^{(i,j),-(i_1,j_1, j_2)}_t$ in \eqref{eq: varphi expressions antithetic}. As above, we write 
\begin{eqnarray} 
\bE \Big[\varphi^{(i_1,j_1)}_t \varphi^{(i_2,j_2)}_t \Big]
& = & \bE \Big[(\varphi^{(i_1,j_1)}_t - \varphi^{(i_1,j_1),-(i_1,j_1, i_2,j_2)}_t) (\varphi^{(i_2,j_2)}_t- \varphi^{(i_2,j_2),-(i_1,j_1, i_2,j_2)}_t) \Big] \nonumber \\
&& + \bE\Big[(\varphi^{(i_1,j_1)}_t - \varphi^{(i_1,j_1),-(i_1,j_1, i_2,j_2)}_t) \varphi^{(i_2,j_2),-(i_1,j_1, i_2,j_2)}_t \Big] \nonumber \\ 
&& +\bE \Big[  \varphi^{(i_1,j_1),-(i_1,j_1, i_2,j_2)}_t (\varphi^{(i_2,j_2)}_t- \varphi^{(i_2,j_2),-(i_1,j_1, i_2,j_2)}_t) \Big] \nonumber \\
&& + \bE \Big[  \varphi^{(i_1,j_1),-(i_1,j_1, i_2,j_2)}_t \varphi^{(i_2,j_2),-(i_1,j_1, i_2,j_2)}_t \Big]. \nonumber 
\end{eqnarray}
As before, we have
$$ \bE |\varphi^{(i,j)}_t - \varphi^{(i,j),-(i_1,j_1, i_2,j_2)}_t|^2 \lesssim \frac{1}{N^2} $$
and hence
\begin{equation} \bE \Big|(\varphi^{(i_1,j_1)}_t - \varphi^{(i_1,j_1),-(i_1,j_1, i_2,j_2)}_t) (\varphi^{(i_2,j_2)}_t- \varphi^{(i_2,j_2),-(i_1,j_1, i_2,j_2)}_t) \Big| \lesssim \frac{1}{N^2}, \label{eq: last case part i i1 j1 i2 j2} \end{equation}
by the Cauchy-Schwarz inequality. By the same argument as in the proof of Lemma \ref{delarue theorem modified} through considering the fourth order linear functional derivative of $U$, along with the bound on $\frac{\delta^4 U}{\delta m^4}$ in \eqref{eq: first order lions linear functional def} (see \eqref{ fourth order varepsilon third order linear functional derivative} and \eqref{eps fourth moment bound} for details), we obtain that
\begin{eqnarray}
&& \varphi^{(i_1,j_1)}_{t} - \varphi^{(i_1,j_1),-(i_1, j_1,i_2,j_2)}_{t} \nonumber \\
&=&    F_1(\big( \xi_r \big)_{r \neq i_1, j_1,i_2,j_2}, \xi_{i_1}, \xi_{j_1}, \xi_{i_2}) + F_2(\big( \xi_r \big)_{r \neq i_1, j_1,i_2,j_2}, \xi_{i_1}, \xi_{j_1}, \xi_{j_2}) + \tilde{\varepsilon}^{(i_1,j_1),-(i_1, j_1, i_2,j_2)}_N, \nonumber 
\end{eqnarray}for some measurable functions $F_1, F_2:(\bR^d)^{2N-1} \to \bR$, where
$$ \bE \Big|\tilde{\varepsilon}^{(i_1,j_1),-(i_1, j_1, i_2,j_2)}_N \Big|^2 \lesssim \frac{1}{N^4}. $$
By a similar conditioning argument as the proof of Lemma \ref{delarue theorem modified}, 
\begin{eqnarray}
   &&  \bE \Big[ \Big( \varphi^{(i_1,j_1)}_{t} - \varphi^{(i_1,j_1),-(i_1, j_1,i_2,j_2)}_{t} -\tilde{\varepsilon}^{(i_1,j_1),-(i_1, j_1, i_2,j_2)}_N \Big) \varphi^{(i_2,j_2),-(i_1,j_1, i_2,j_2)}_t  \Big] \nonumber \\
    & = & \bE \Big[ F_1(\big( \xi_r \big)_{r \neq i_1, j_1,i_2,j_2}, \xi_{i_1}, \xi_{j_1}, \xi_{i_2})  \bE \Big[ \varphi^{(i_2,j_2),-(i_1,j_1, i_2,j_2)}_t \Big| \cF^{-j_2}  \Big] \Big] \nonumber \\
    && + \bE \Big[ F_2(\big( \xi_r \big)_{r \neq i_1, j_1,i_2,j_2}, \xi_{i_1}, \xi_{j_1}, \xi_{j_2})  \bE \Big[ \varphi^{(i_2,j_2),-(i_1,j_1, i_2,j_2)}_t  \Big| \cF^{-i_2}  \Big] \Big] =0, \nonumber 
\end{eqnarray}
which implies, by the Cauchy-Schwarz inequality and the bound on $\frac{\delta^2 U}{\delta m^2}$ in \eqref{eq: first order lions linear functional def}, that
\begin{equation}
\bE \Big|\Big( \varphi^{(i_1,j_1)}_{t} - \varphi^{(i_1,j_1),-(i_1, j_1,i_2,j_2)}_{t} \Big) \varphi^{(i_2,j_2),-(i_1,j_1, i_2,j_2)}_t  \Big| \lesssim \frac{1}{N^2} . \label{eq: last case part ii i1 j1 i2 j2}
\end{equation}
Similarly, 
\begin{equation}
\bE \Big|  \varphi^{(i_1,j_1),-(i_1,j_1, i_2,j_2)}_t (\varphi^{(i_2,j_2)}_t- \varphi^{(i_2,j_2),-(i_1,j_1, i_2,j_2)}_t) \Big| \lesssim \frac{1}{N^2} . \label{eq: last case part iii i1 j1 i2 j2}
\end{equation}
By the same conditioning argument, 
\begin{equation}
\bE \Big[  \varphi^{(i_1,j_1),-(i_1,j_1, i_2,j_2)}_t \varphi^{(i_2,j_2),-(i_1,j_1, i_2,j_2)}_t \Big] = \bE \Big[  \varphi^{(i_1,j_1),-(i_1,j_1, i_2,j_2)}_t \bE \Big[  \varphi^{(i_2,j_2),-(i_1,j_1, i_2,j_2)}_t \Big| \cF^{-i_2} \Big] \Big] =0. 
\label{eq: last case part iv i1 j1 i2 j2}
\end{equation}
A combination of \eqref{eq: last case part i i1 j1 i2 j2}, \eqref{eq: last case part ii i1 j1 i2 j2}, \eqref{eq: last case part iii i1 j1 i2 j2} and \eqref{eq: last case part iv i1 j1 i2 j2} implies that
\begin{equation}
    \frac{1}{N^4} \sum_{ \substack{i_1, j_1, i_2,j_2 \in \{1, \ldots, 2N \} \\  i_1, j_1, i_2,j_2 \text{ are distinct}} } \bE \Big[\varphi^{(i_1,j_1)}_t \varphi^{(i_2,j_2)}_t \Big] \lesssim \frac{1}{N^2}.   \label{eq: antithetic interpolation second part }
\end{equation}
Finally, a combination of \eqref{eq: second order antithetic difference estimation one term no time discretisation},  \eqref{eq: interpolation in measures phi i j expansion},  \eqref{eq: antithetic interpolation first part } and \eqref{eq: antithetic interpolation second part } implies that
$$ \bE \bigg| {\int_{\bR^{2d}}\frac{\delta^{2} U}{\delta m^{2}}(m^{2N}_t)(\mathbf{y})\,(\mu_{2N}-\mu)^{\otimes 2} (d \mathbf{y})} \bigg|^2 \lesssim \frac{1}{N^2}. $$
    \end{proof}

\section{Dimension-independent rate of uniform strong  propagation of chaos}

    We now introduce a mean-field coupling of the particle system \eqref{eq:particlesystem} by
\begin{equation} \label{eq:coupling eqn} 
\begin{cases} 
      dX^i_t = \xi_i + \int_0^t b(X^i_s, \mclaw[s]) \,ds + \int_0^t \sigma (X^i_s, \mclaw[s]) \,dW^i_s, \quad 1 \leq i \leq N,  \quad  t \in [0, T],\\
      \\
       \couplaw[s] := \frac{1}{N} \sum_{i=1}^N \delta_{X^{i}_s}. 
   \end{cases}
\end{equation}
The following two assumptions are adopted in most results. We assume that 
\[ \begin{cases} 
      b \text{ and } \sigma  \text{ are Lipschitz continuous with respect to the Euclidean norm  and the } W_2 \text{ norm}, & \\
      &  \\
      \Phi \text{ is Lipschitz continuous with respect to the }  W_2 \text{ norm,} &
   \end{cases} \label{eq:Lip} \tag{Lip}
\]
and that the initial law $\nu$ satisfies
\[ \int_{\bR^d} |x|^{12} \, \nu(dx) < +\infty. \quad \label{eq:Int} \tag{Int}
\]
Note that \eqref{eq:Lip} guarantees strong existence and uniqueness of \eqref{eq:MVSDE} and  \eqref{eq:particlesystem}. The following proposition is essential to the proofs of Theorem \ref{strong error} and Theorem \ref{variance antithetic non-discretised}. \begin{proposition} \label{important proposition lemma strong error} Suppose that $b$ and $\sigma$ admit linear growth in the spatial and measure components. Suppose that $\varphi \in \cM_3( \bR^d \times \cP_2(\bR^d))$. Assume \eqref{eq:Int}. Then
$$ \frac{1}{N} \sum_{i=1}^{N} \sup_{t \in [0,T]} \bE \Big| \varphi(X^i_t, \couplaw[t]) - \varphi(X^i_t, \mclaw[t])  \Big|^4 \leq \frac{C}{N^2},$$ 
for some constant $C>0$. 
\end{proposition}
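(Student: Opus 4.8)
The plan is to exploit a structural feature of the coupling system \eqref{eq:coupling eqn}: each $X^i$ is driven by the \emph{common deterministic} flow of measures $\mclaw[s]$ but by the \emph{independent} data $(\xi_i,W^i)$. Hence $X^1_t,\dots,X^N_t$ are i.i.d. copies of the McKean--Vlasov solution $X_t$, and $\couplaw[t]$ is exactly the empirical measure of $N$ i.i.d. samples from $m_0:=\mclaw[t]$. This reduces the statement to Lemma \ref{delarue theorem modified} applied conditionally, the only genuine complication being that the pinned point $X^i_t$ is itself one of the atoms of $\couplaw[t]$.

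First I would remove that self-interaction by a leave-one-out decomposition. Fix $i$ and $t$, set $m^{N-1,-i}_t:=\frac{1}{N-1}\sum_{j\neq i}\delta_{X^j_t}$, and write
\begin{equation*}
\varphi(X^i_t, \couplaw[t]) - \varphi(X^i_t, \mclaw[t]) = A_i + B_i,
\end{equation*}
with $A_i:=\varphi(X^i_t,\couplaw[t])-\varphi(X^i_t,m^{N-1,-i}_t)$ and $B_i:=\varphi(X^i_t,m^{N-1,-i}_t)-\varphi(X^i_t,\mclaw[t])$. Since $\couplaw[t]-m^{N-1,-i}_t=\frac{1}{N}(\delta_{X^i_t}-m^{N-1,-i}_t)$, the fundamental theorem of calculus for the linear functional derivative together with the linear-growth bound on $\frac{\delta\varphi}{\delta m}$ from Proposition \ref{theorem lions linear functional} gives $|A_i|\leq \frac{C}{N}\|\partial_\mu\varphi\|_\infty\big(1+|X^i_t|+\frac{1}{N-1}\sum_{j\neq i}|X^j_t|\big)$; raising to the fourth power, taking expectations and using Jensen on the empirical average shows $\bE|A_i|^4=O(N^{-4})$, which is negligible relative to the target.

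For the main term $B_i$ I would condition on $X^i_t$. By independence of the particles, the samples $(X^j_t)_{j\neq i}$ remain i.i.d. from $m_0=\mclaw[t]$ and are independent of $X^i_t$; so conditionally on $X^i_t=x$, $B_i$ equals $U(m^{N-1,-i}_t)-U(m_0)$ for the functional $U:=\varphi(x,\cdot)\in\cM_3(\cP_2(\bR^d))$ built from $N-1$ i.i.d. samples. Applying Lemma \ref{delarue theorem modified} — whose constant is, crucially, uniform in both $U$ and the limiting law — and using $\|\partial^k_\mu U\|_\infty\leq\|\partial^k_\mu\varphi\|_\infty$ uniformly in $x$ (this is what $\varphi\in\cM_3(\bR^d\times\cP_2(\bR^d))$ provides), I obtain $\bE[|B_i|^4\mid X^i_t]\leq \frac{C}{(N-1)^2}\prod_{k=1}^3(1+\|\partial^k_\mu\varphi\|^4_\infty)\big(1+\int|y|^{12}\,\mclaw[t](dy)\big)$; taking the outer expectation gives $\bE|B_i|^4\leq \frac{C}{N^2}\big(1+\int|y|^{12}\,\mclaw[t](dy)\big)$.

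It remains to control the twelfth moment uniformly in time. Under \eqref{eq:Lip} and \eqref{eq:Int} the coefficients have linear growth and $\nu\in\cP_{12}(\bR^d)$, so a standard Gr\"{o}nwall/Burkholder--Davis--Gundy argument yields $\sup_{t\in[0,T]}\int|y|^{12}\,\mclaw[t](dy)=\sup_{t\in[0,T]}\bE|X_t|^{12}<\infty$, which simultaneously bounds the fourth moments appearing in $A_i$. Combining the two estimates via $|a+b|^4\leq 8(|a|^4+|b|^4)$ gives $\bE|\varphi(X^i_t,\couplaw[t])-\varphi(X^i_t,\mclaw[t])|^4\leq \frac{C}{N^2}$ uniformly in $i$ and $t$; taking $\sup_t$ and averaging over $i$ yields the claim. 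The only real obstacle is the dependence of the pinned atom $X^i_t$ on $\couplaw[t]$, and once the leave-one-out split confines it to the $O(N^{-4})$ term $A_i$, the conditioning argument delegates all the work to the already-established i.i.d. estimate of Lemma \ref{delarue theorem modified}.
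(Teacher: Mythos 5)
Your proposal is correct and follows essentially the same route as the paper: a leave-one-out split isolating the atom $\delta_{X^i_t}$, followed by conditioning on $X^i_t$ and invoking Lemma \ref{delarue theorem modified} (whose constant is uniform in the functional and the law, precisely for this purpose) for the remaining $N-1$ i.i.d. particles, with the twelfth moment controlled by a Gr\"{o}nwall argument. The only cosmetic difference is that the paper bounds the self-interaction term via Lemma \ref{W2 lemma} and $W_2$-Lipschitz continuity of $\varphi$ (yielding $O(N^{-2})$), whereas you expand in the linear functional derivative to get the sharper but equally sufficient $O(N^{-4})$.
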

\begin{proof}
\begin{eqnarray}
&& \frac{1}{N} \sum_{i=1}^{N} \sup_{t \in [0,T]} \bE \bigg[ \bigg|\varphi \Big( X^{i}_t,\frac{1}{N} \sum_{j=1}^{N} \delta_{X^j_t} \Big) -  \varphi(X^{i}_t, \mclaw[t])  \bigg|^4 \bigg] \nonumber \\
& = &  \frac{1}{N} \sum_{i=1}^{N} \sup_{t \in [0,T]} \bE \bigg[ \bE \bigg[ \bigg|\varphi \Big( \eta,\frac{1}{N} \delta_{\eta} + \frac{N-1}{N} \cdot \frac{1}{N-1} \sum_{\substack{ 1 \leq j \leq N \\ j \neq i}} \delta_{X^j_t} \Big) -  \varphi(\eta, \mclaw[t])  \bigg|^4 \bigg] \bigg|_{\eta= X^{i}_t} \bigg]  \nonumber \\
& \leq & \frac{8}{N} \sum_{i=1}^{N} \sup_{t \in [0,T]} \bE \bigg[ \bE \bigg[ \bigg|\varphi \Big( \eta,\frac{1}{N} \delta_{\eta} + \frac{N-1}{N} \cdot \frac{1}{N-1} \sum_{\substack{ 1 \leq j \leq N \\ j \neq i}} \delta_{X^j_t} \Big) \nonumber \\
&& -  \varphi \Big(\eta, \frac{1}{N-1} \sum_{\substack{ 1 \leq j \leq N \\ j \neq i}} \delta_{X^j_t} \Big)  \bigg|^4 \bigg] \bigg|_{\eta= X^{i}_t} \bigg]  \nonumber \\
&& + \frac{8}{N} \sum_{i=1}^{N} \sup_{t \in [0,T]} \bE \bigg[ \bE \bigg[ \bigg|\varphi \Big( \eta, \frac{1}{N-1} \sum_{\substack{ 1 \leq j \leq N \\ j \neq i}} \delta_{X^j_t} \Big) -  \varphi(\eta, \mclaw[t])  \bigg|^4 \bigg] \bigg|_{\eta= X^{i}_t} \bigg]  \nonumber \\
& =: & \Pi_1 + \Pi_2. \nonumber 
\end{eqnarray} 
By Lemma \ref{W2 lemma}, using the same type of estimate as \eqref{X Yn bound}, we have
\begin{equation}
\Pi_1  \leq   \frac{8}{N} \sum_{i=1}^{N} \sup_{t \in [0,T]} \bE \bigg[ \frac{4}{N^2} \bigg(|X^i_t|^2 + \frac{1}{N-1} \sum_{\substack{ 1 \leq j \leq N \\ j \neq i}}   |X^j_t|^2 \bigg)^2 \bigg] \lesssim \frac{1}{N^2} . \label{Pi 1 bound} \end{equation}
By the assumption on $\varphi, $ we observe that for any $\eta \in \bR^d$, the uniform bounds on $\pmu \varphi(\eta, \cdot)$, $\ptwomu \varphi (\eta, \cdot)$  and $\pthreemu \varphi (\eta, \cdot)$ do not depend on $\eta$. Finally, since $b$ and $\sigma$ are of linear growth in the spatial and measure components and $\bE[|\xi|^{12}]< +\infty$, we have $\sup_{t \in [0,T]} \bE[|X_t|^{12}]< +\infty$. Therefore, Lemma \ref{delarue theorem modified} implies that 
\begin{equation} \Pi_2 \lesssim \frac{1}{(N-1)^2} \prod_{i=1}^3 \Big( 1+ \sup_{\eta \in \bR^d} \| \partial^i_{\mu} \varphi(\eta, \cdot) \|^4_{\infty}  \Big) \bigg( 1+ \sup_{t \in [0,T]} \int_{\bR^d} |y|^{12} \, \mclaw[t] (dy) \bigg).  \label{Pi 2 bound} \end{equation} 
A combination of \eqref{Pi 1 bound} and \eqref{Pi 2 bound} yields the result. 
\end{proof}
Note that Proposition \ref{important proposition lemma strong error} allows us to completely bypass the consideration of the Wasserstein distance between empirical measures and their limiting law. Assuming \eqref{eq:Lip} and \eqref{eq:Int}, Theorem 10.2.7 in \cite{rachev1998mass} gives us a rate of convergence of 
    \begin{equation}  \bE \Big[ \sup_{t \in [0,T]} W_2 \big( \mclaw[t], \couplaw[t] \big)^2 \Big] \leq \frac{C}{N^{2/(d+8)}}.  \label{eq: decom W2 first term} \end{equation}

The following result gives a uniform rate of strong propagation of chaos between the particle system \eqref{eq:particlesystem} and its coupled mean-field limit \eqref{eq:coupling eqn}, under the assumption that $b$ and $\sigma$ are sufficiently smooth in the sense of L-derivatives. This is a different set of sufficient conditions compared to the existing results in the literature with the same rate, such as Lemma 5.1 in \cite{delarue2019master}, Theorem 1 in \cite{jabin2018quantitative} and \cite{jabir2019rate}.

Let $\cC_T:= C([0,T],\bR^d)$ be the space of continuous functions from $[0,T]$ to $\bR^d$ equipped with the supremum norm and $W_{\cC_T,2}$ be the 2-Wasserstein metric on $\cC_T $. 
\begin{theorem}[Uniform strong propagation of chaos] \label{strong error}
Assume \eqref{eq:Int}. Suppose that $b, \sigma \in \cM_3( \bR^d \times \cP_2(\bR^d))$. Then
$$ \bE \Big[ W_{\cC_T,2} \big( \nlaw[], \couplaw[] \big)^4 \Big] \leq \bE \bigg[ \frac{1}{N} \sum_{i=1}^N  \bigg( \sup_{t \in [0,T]} \big| X^i_{t} - Y^{i,N}_{t} \big|^4 \bigg) \bigg]  \leq \frac{C}{N^2} , $$ 
for some constant $C>0$.
\end{theorem}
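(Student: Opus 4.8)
The plan is to establish the two inequalities separately, the second being the substantial one. For the first inequality I would use the natural synchronous coupling that pairs $Y^{i,N}$ with $X^i$ through the shared noise $W^i$ and initial datum $\xi_i$. This empirical coupling of $\nlaw[]$ and $\couplaw[]$ on the path space $\cC_T$ yields $W_{\cC_T,2}(\nlaw[],\couplaw[])^2 \le \frac{1}{N}\sum_{i=1}^N \sup_{t\in[0,T]}|X^i_t - Y^{i,N}_t|^2$, and squaring together with the Cauchy--Schwarz inequality $\big(\tfrac1N\sum_i a_i\big)^2 \le \tfrac1N\sum_i a_i^2$ (applied to $a_i = \sup_t|X^i_t-Y^{i,N}_t|^2$) produces the first bound after taking expectations.

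For the second inequality, set $\delta^i_t := X^i_t - Y^{i,N}_t$, which solves the SDE whose drift increment is $b(X^i_s,\mclaw[s]) - b(Y^{i,N}_s,\nlaw[s])$ and whose diffusion increment is the analogous difference of $\sigma$. I would decompose each increment by inserting the coupled empirical measure $\couplaw[s]$:
$$b(X^i_s,\mclaw[s]) - b(Y^{i,N}_s,\nlaw[s]) = \underbrace{\big[b(X^i_s,\mclaw[s]) - b(X^i_s,\couplaw[s])\big]}_{\text{(I)}} + \underbrace{\big[b(X^i_s,\couplaw[s]) - b(Y^{i,N}_s,\couplaw[s])\big]}_{\text{(II)}} + \underbrace{\big[b(Y^{i,N}_s,\couplaw[s]) - b(Y^{i,N}_s,\nlaw[s])\big]}_{\text{(III)}},$$
and likewise for $\sigma$. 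Since $\cM_3$-regularity (boundedness of the first $L$-derivative) makes $b,\sigma$ Lipschitz in both arguments, term (II) is $\lesssim |\delta^i_s|$ and term (III) is $\lesssim W_2(\couplaw[s],\nlaw[s]) \le \big(\tfrac1N\sum_j|\delta^j_s|^2\big)^{1/2}$; term (I) is exactly the quantity controlled by Proposition \ref{important proposition lemma strong error} applied with $\varphi=b$ and $\varphi=\sigma$, giving $\frac1N\sum_i \bE|\text{(I)}_s|^4 \lesssim N^{-2}$ uniformly in $s$.

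Applying the Burkholder--Davis--Gundy inequality to the stochastic integral and Hölder's inequality to the Lebesgue integral, then taking the supremum in time and averaging over $i$, I would obtain for $u(t) := \frac1N\sum_{i=1}^N \bE\big[\sup_{s\le t}|\delta^i_s|^4\big]$ an estimate of the form
$$u(t) \lesssim \int_0^t \frac1N\sum_{i=1}^N \Big( \bE|\text{(I)}_s|^4 + \bE|\text{(II)}_s|^4 + \bE|\text{(III)}_s|^4 \Big)\,ds \lesssim \int_0^t \Big( \frac{1}{N^2} + u(s) \Big)\,ds,$$
where the (III)-contribution is absorbed using $\frac1N\sum_i \bE\big[\tfrac1N\sum_j|\delta^j_s|^4\big] = \frac1N\sum_j\bE|\delta^j_s|^4 \le u(s)$. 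Grönwall's lemma then yields $u(T) \lesssim N^{-2}$, which is the claim.

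The main obstacle is term (I). A naive estimate relying only on $W_2$-Lipschitz continuity of $b$ would bound it by $W_2(\mclaw[s],\couplaw[s])$, whose fourth moment decays at the dimension-dependent rate $N^{-2/(d+8)}$ of \eqref{eq: decom W2 first term}; the dimension-independent $N^{-2}$ rate is recovered only by exploiting the higher smoothness of $b,\sigma$ via Proposition \ref{important proposition lemma strong error}, itself resting on the fluctuation cancellation from the linear-functional-derivative expansion and the conditional-independence argument of Lemma \ref{delarue theorem modified}. A secondary point is uniformity of the constant in $N$, which needs $\sup_{t\in[0,T]}\bE|X_t|^{12}<\infty$; this follows from \eqref{eq:Int} together with the Lipschitz property implied by $b,\sigma\in\cM_3$.
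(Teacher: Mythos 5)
Your proposal is correct and takes essentially the same route as the paper's proof: both reduce the drift/diffusion increments via Lipschitz continuity and insertion of the coupled empirical measure $\couplaw[s]$ to (a) particle-difference terms absorbed by Gr\"onwall, (b) a $W_2(\couplaw[s],\nlaw[s])$ term bounded by the averaged particle differences, and (c) the fluctuation term $b(X^i_s,\mclaw[s])-b(X^i_s,\couplaw[s])$ controlled in $L^4$ by Proposition \ref{important proposition lemma strong error}, which is exactly where the dimension-independent $N^{-2}$ rate enters. The only difference is cosmetic ordering of the three-way splitting of the increment.
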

\begin{proof}
By the H\"{o}lder and Buckholder-Davis-Gundy inequalities, estimating the $L^4$ difference between \eqref{eq:particlesystem} and \eqref{eq:coupling eqn}  gives
   \begin{eqnarray}  \bE \Big[ \sup_{s \in [0,t]} \big| X^i_{s} - Y^{i,N}_{s} \big|^4 \Big]  & \leq & 
     C \bigg( \int_0^{t}   \bE |b(X^i_s, \mclaw[s]) - b(Y^{i,N}_s, \nlaw[s]) |^4 \,ds \nonumber \\
     && + \int_0^{t} \bE \|\sigma(X^i_s, \mclaw[s]) - \sigma(Y^{i,N}_s, \nlaw[s]) \|^4 \,ds \bigg), \label{X Yn bound} 
     \end{eqnarray}
    for every $t \in [0,T]$. 
      By Lipschitz continuity of $b$ and $\sigma$,
   \begin{eqnarray}  \bE \Big[ \sup_{s \in [0,t]} \big| X^i_{s} - Y^{i,N}_{s} \big|^4 \Big]  & \leq & 
     C \bigg( \int_0^{t}  \bE \Big[ \sup_{u \in [0,s]} \big| X^i_{u} - Y^{i,N}_{u} \big|^4 \Big]  \,ds  + \int_0^{t}   \bE |b(X^i_s, \mclaw[s]) - b(X^i_s, \nlaw[s]) |^4 \,ds  \nonumber \\
     && + \int_0^{t} \bE \|\sigma(X^i_s, \mclaw[s]) - \sigma(X^i_s, \nlaw[s]) \|^4 \,ds \bigg), \nonumber  
     \end{eqnarray}
    for every $t \in [0,T]$, which gives, upon taking average over $i$,
    \begin{eqnarray}  \frac{1}{N} \sum_{i=1}^N \bE \Big[ \sup_{s \in [0,t]} \big| X^i_{s} - Y^{i,N}_{s} \big|^4 \Big]   & \leq & 
     C \bigg( \int_0^{t}   \frac{1}{N} \sum_{i=1}^N  \bE \Big[ \sup_{u \in [0,s]} \big| X^i_{u} - Y^{i,N}_{u} \big|^4 \Big]  \,ds  \nonumber \\
     && + \int_0^{t}   \frac{1}{N} \sum_{i=1}^N  \bE |b(X^i_s, \mclaw[s]) - b(X^i_s, \nlaw[s]) |^4 \,ds  \nonumber \\
     && + \int_0^{t}  \frac{1}{N} \sum_{i=1}^N  \bE \|\sigma(X^i_s, \mclaw[s]) - \sigma(X^i_s, \nlaw[s]) \|^4 \,ds \bigg). \label{eq: lipschitz est particles couplings initial} 
     \end{eqnarray}
        Also, the empirical measure of the particles can be replaced by the empirical measure of the coupled system by the bound
    \begin{eqnarray}  \bE \big[ W_2 ( \couplaw[s], \nlaw[s])^4 \big]  \leq  \bigg[ \bigg( \frac{1}{N} \sum_{i=1}^N  \bE  \big| Y^{i,N}_s - X^i_s \big|^2 \bigg)^2 \bigg] \leq \frac{1}{N} \sum_{i=1}^N  \bE \Big[ \sup_{u \in [0,s]} \big| X^i_{u} - Y^{i,N}_{u} \big|^4 \Big].  \label{eq: decom W2 second term}
       \end{eqnarray} 
A combination of \eqref{eq: lipschitz est particles couplings initial} and \eqref{eq: decom W2 second term} gives
 \begin{eqnarray}  
 \frac{1}{N} \sum_{i=1}^N \bE \Big[ \sup_{s \in [0,t]} \big| X^i_{s} - Y^{i,N}_{s} \big|^4 \Big]  & \leq & 
     C \bigg( \int_0^{t} \frac{1}{N} \sum_{i=1}^N  \bE \Big[ \sup_{u \in [0,s]} \big| X^i_{u} - Y^{i,N}_{u} \big|^4 \Big]  \,ds   \nonumber \\
     && + \int_0^{t} \frac{1}{N} \sum_{i=1}^N \sup_{u \in [0,s]}  \bE |b(X^i_u, \mclaw[u]) - b(X^i_u, \couplaw[u]) |^4 \,ds  \nonumber \\
     && + \int_0^{t} \frac{1}{N} \sum_{i=1}^N \sup_{u \in [0,s]}  \bE \|\sigma(X^i_u, \mclaw[u]) - \sigma(X^i_u, \couplaw[u]) \|^4 \,ds \bigg). \nonumber  
     \end{eqnarray}
     Therefore, by Proposition \ref{important proposition lemma strong error} and Gronwall's inequality, we have
     $$ \frac{1}{N} \sum_{i=1}^N \bE \Big[ \sup_{s \in [0,T]} \big| X^i_{s} - Y^{i,N}_{s} \big|^4 \Big] \leq  \frac{C}{N^2}, $$
     for every $t \in [0,T]$. 
\end{proof}

\section{Antithetic MLMC without time discretisation}  \label{theoretical MLMC} 

    The main aim of this section is to prove the complexity of the antithetic MLMC estimator, via the following theorem, which states that the variance of the antithetic difference in \eqref{eq AMLMC} converges in $N$ in the rate $O(1/N^2)$. In the proof, Proposition \ref{important proposition lemma strong error} and Theorem \ref{strong error} provide us with the necessary estimates when we revert to the mean-field limit. 
\begin{theorem}[Variance of antithetic difference] \label{variance antithetic non-discretised} 
Assume \eqref{eq:Int}. Suppose that $b, \sigma \in \cM_4 \big( \bR^d  \times \mathcal{P}_2(\bR^d) \big) $ and $\Phi \in \cM_4 \big(  \mathcal{P}_2(\bR^d) \big) $.  Then 
$$\text{Var} \Big[ \Phi (\twonlaw[T]) -  \frac{1}{2} \big( \Phi (\twonlawone[T]) + \Phi (\twonlawtwo[T]) \big) \Big] \leq   \frac{C}{N^2}, $$ 
where $C$ is a constant that depends on $\Phi$, $b$, $\sigma$ and $T$, but does not depend on $N$.
\end{theorem}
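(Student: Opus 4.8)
The plan is to revert to the mean-field coupling \eqref{eq:coupling eqn} so as to replace the interacting particles by genuinely i.i.d.\ samples, for which Theorem \ref{antithetic initial separation} already gives the desired rate. Recall that $X^1,\dots,X^{2N}$ in \eqref{eq:coupling eqn} are driven by the same data $(\xi_i,W^i)$ as the particles but interact only through the true law, so $X^1_T,\dots,X^{2N}_T$ are i.i.d.\ with common law $\mclaw[T]$. Setting
\[
\nu^{2N}:=\tfrac1{2N}\sum_{i=1}^{2N}\delta_{X^i_T},\qquad \nu^{2N,(1)}:=\tfrac1N\sum_{i=1}^{N}\delta_{X^i_T},\qquad \nu^{2N,(2)}:=\tfrac1N\sum_{i=N+1}^{2N}\delta_{X^i_T},
\]
the combination $\Phi(\nu^{2N})-\tfrac12\big(\Phi(\nu^{2N,(1)})+\Phi(\nu^{2N,(2)})\big)$ is exactly the i.i.d.\ antithetic difference. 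Since $\mclaw[T]\in\cP_8$ (indeed $\cP_{12}$, by \eqref{eq:Int} and the moment bound $\sup_t\bE|X_t|^{12}<\infty$) and membership $\Phi\in\cM_4$ supplies the linear-functional-derivative bounds required by Theorem \ref{antithetic initial separation} (applied with base law $\mclaw[T]$ in place of $\nu$), its second moment is $\lesssim 1/N^2$. It then remains to bound the reversion error
\[
R:=\Big[\Phi(\twonlaw[T])-\Phi(\nu^{2N})\Big]-\tfrac12\Big[\Phi(\twonlawone[T])-\Phi(\nu^{2N,(1)})\Big]-\tfrac12\Big[\Phi(\twonlawtwo[T])-\Phi(\nu^{2N,(2)})\Big]
\]
and show $\bE|R|^2\lesssim 1/N^2$.

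Next I would expand each bracket in $R$ to first order along the linear functional derivative, integrating $\tfrac{\delta\Phi}{\delta m}$ against the measure increment $\twonlaw[T]-\nu^{2N}=\tfrac1{2N}\sum_i(\delta_{Y^{i,2N}_T}-\delta_{X^i_T})$ (and its sub-system analogues), and then expand $\tfrac{\delta\Phi}{\delta m}$ once more about the deterministic law $\mclaw[T]$. The zeroth-order contributions carry the antithetic weights $1-\tfrac12-\tfrac12=0$ and cancel. Every remaining term except the leading first-order piece is a product of at least two ``small'' measure increments: one factor of strong-error type $\tfrac1{2N}\sum_i|Y^{i,2N}_T-X^i_T|$, which by Theorem \ref{strong error} and Jensen's inequality is $O(1/\sqrt N)$ in $L^4$, multiplied by another increment of the same or of i.i.d.-fluctuation type; by Cauchy--Schwarz these are $O(1/N)$ in $L^2$, as required. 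Thus all second- and higher-order contributions to $R$ are controlled directly by Theorem \ref{strong error} and Proposition \ref{important proposition lemma strong error}, and the conditioning/removal-of-particles machinery is only invoked through the black-box application of Theorem \ref{antithetic initial separation} to the i.i.d.\ part — the reversion to the independent $X^j_T$ being precisely what makes that identity available.

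The crux is the leading first-order term, namely $\tfrac1{2N}\sum_i\big[g(Y^{i,2N}_T)-g(X^i_T)\big]$ combined antithetically, where $g:=\tfrac{\delta\Phi}{\delta m}(\mclaw[T],\cdot)$. The common mean-field contributions $g(X^i_T)$ cancel under the weights $1-\tfrac12-\tfrac12=0$, leaving
\[
\tfrac1{2N}\sum_{i=1}^{N}\Big[g(Y^{i,2N}_T)-g(Y^{i,2N,(1)}_T)\Big]+\tfrac1{2N}\sum_{i=N+1}^{2N}\Big[g(Y^{i,2N}_T)-g(Y^{i,2N,(2)}_T)\Big],
\]
a sum of discrepancies between each full $2N$-system particle and its $N$-sub-system twin (both driven by the same $(\xi_i,W^i)$). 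This is the main obstacle. The naive estimate via Lipschitz continuity of $g$ and Theorem \ref{strong error} only yields $O(1/\sqrt N)$ in $L^2$, since $\bE|Y^{i,2N}_T-Y^{i,2N,(1)}_T|^2\lesssim 1/N$ and the $2N$ summands do not average as independent mean-zero variables.

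To recover the missing factor one must exploit the antithetic cancellation one layer deeper. I would linearise the particle flow in its measure argument: to leading order $Y^{i,2N}_T-Y^{i,2N,(1)}_T$ is a linear functional of the empirical fluctuation $\twonlaw[r]-\twonlawone[r]$ along $r\in[0,T]$, whose dominant part is the \emph{centred} half-sample difference $\tfrac1N\sum_{j=N+1}^{2N}\delta_{X^j_r}-\tfrac1N\sum_{j=1}^{N}\delta_{X^j_r}$ (the measure-level antithetic difference, which vanishes in mean). Substituting this back turns the first-order term into a degenerate double sum $\tfrac1{N^2}\sum_{i,j}\pm K(X^i_T,X^j_T)$ over the independent samples $X^j_T$; its conditional-expectation structure is null, so the same conditioning argument as in Lemma \ref{delarue theorem modified} gives $L^2$-norm $O(1/N)$. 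I expect the bookkeeping of this linearisation — bounding the flow derivative in measure, replacing the $Y$-empirical fluctuations by $X$-empirical ones via Proposition \ref{important proposition lemma strong error} and Theorem \ref{strong error}, and estimating the expansion remainders — to be the most delicate and lengthy part of the argument, while the second-order and remainder terms follow routinely from the techniques already developed for Theorem \ref{antithetic initial separation}.
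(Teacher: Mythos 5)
Your decomposition into an i.i.d.\ antithetic part (handled by Theorem \ref{antithetic initial separation} applied at the base law $\mclaw[T]$) plus a reversion error $R$ is a genuinely different architecture from the paper's, and the treatment of the zeroth-order cancellation and of the second-order remainders via Theorem \ref{strong error}, Proposition \ref{important proposition lemma strong error} and Cauchy--Schwarz is sound. The problem is the crux term you yourself isolate, $\tfrac1{2N}\sum_{i}\big[g(Y^{i,2N}_T)-g(Y^{i,2N,(1)}_T)\big]$ (plus its $(2)$-counterpart): everything you say about it is a heuristic, and none of the results available in the paper delivers it. To execute your plan you would need (a) a \emph{first-order expansion} of $Y^{i,2N}_T-Y^{i,2N,(1)}_T$ as an explicit linear functional of the trajectory of empirical-measure fluctuations, with an $O(1/N)$ remainder in $L^2$ per particle --- Theorem \ref{strong error} only gives the \emph{size} $O(1/\sqrt N)$ of this difference, not its structure, and no flow-derivative or second-order coupling estimate is established anywhere; and (b) a proof that, after replacing the $Y$-fluctuations by centred $X$-fluctuations, the resulting double sum $\tfrac1{N^2}\sum_{i,j}\pm K(X^i,X^j)$ is small. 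Point (b) is more delicate than ``the same conditioning argument as in Lemma \ref{delarue theorem modified}'': the kernel is degenerate in $j$ (because the half-sample difference is centred) but \emph{not} in $i$; the $O(1/N)$ rate only survives because the antithetic combination also puts opposite signs on the $i\le N$ and $i>N$ blocks, so that \emph{both} marginal sums are centred. Your sketch does not record this sign bookkeeping, and it is exactly the place where a naive execution would lose a factor of $\sqrt N$.

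The paper sidesteps all of this by applying It\^o's formula to the master-equation solution $\cV$ of \eqref{eq pde measure} (Theorem \ref{eq:generalisationmainresult}), which yields the exact representation \eqref{eq representation}: the antithetic difference becomes $\mathscr A+\mathscr D+\mathscr S$, where $\mathscr A$ is the initial-condition antithetic difference (Theorem \ref{antithetic initial separation}), $\mathscr D$ carries an explicit $1/N$ prefactor, and the crux appears as the martingale $\mathscr S=\tfrac1{2N}\sum_i\int_0^T\big(\Sigma(s,Y^{i,2N}_s,\cdot)-\Sigma(s,Y^{i,2N,(1)}_s,\cdot)\big)\,dW^i_s$. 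It\^o's isometry then kills all cross terms for free, so the per-particle strong-error rate $O(1/\sqrt N)$ from Theorem \ref{strong error} and Proposition \ref{important proposition lemma strong error} already gives $\bE[\mathscr S^2]\lesssim N\cdot N^{-2}\cdot N^{-1}=N^{-2}$. In effect, the orthogonality across particles that you would have to manufacture by hand through the linearisation-plus-degeneracy argument is supplied automatically by the martingale structure; if you carried your programme through rigorously you would essentially be re-deriving the regularity of the lifted flow that $\cV$ and $\partial_\mu\cV$ encode. As written, the proposal has a genuine gap at its central step.
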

%

\begin{proof}[Proof of Theorem \ref{variance antithetic non-discretised}]
The main techniques in the proof depend on  the function $\cV : [0,T] \times \mathcal{P}_2 (\bR^d) \to \bR $, which is defined in \eqref{eq: defofflow} by 
\begin{equation*}  \cV( s, \rvlaw[\eta]) = \Phi \big( \rvlaw[{X^{s, \eta}_T}] \big). 
\end{equation*}
Another crucial ingredient in the proof is \eqref{eq representation}, which represents the difference  $ \Phi (\nlaw[T])  - \Phi (\mclaw[T]) $ as 
\begin{eqnarray} 
     \Phi (\nlaw[T])  - \Phi (\mclaw[T])  & =  &  \big( \cV(0,\nlaw[0]) - \cV(0,\nu) \big)  \nonumber \\
     && +  \int_0^{T} \frac{1}{2} \Bigg[\frac{1}{N^2} \sum_{i=1}^N  \text{Tr} \bigg( a \big(Y^{i,N}_s, \nlaw[s]  \big)    \partial^2_{\mu}  {\cV} \big( s, \nlaw[s]  \big)  (Y^{i,N}_s, Y^{i,N}_s) \bigg) \Bigg] \,ds \nonumber  \\
&& +\frac{1}{N}\sum_{i=1}^N   \int_0^{T}  \sigma( Y^{i,N}_s, \nlaw[s] )^T \partial_{\mu} {\cV} \big( s, \nlaw[s]  \big) ( Y^{i,N}_s) \cdot dW_s^i. \nonumber 
\end{eqnarray}
Hence,  
\begin{equation*} 
\begin{split}
    & \Phi (\twonlaw[T]) -  \frac{1}{2} \big( \Phi (\twonlawone[T]) + \Phi (\twonlawtwo[T]) \big) 
     =      \mathscr{A} + \mathscr{D} + \mathscr{S},
\end{split} 
\end{equation*}
where 
$$ \mathscr{A} := \cV(0,\twonlaw[0]) - \frac{1}{2} \big(\cV(0,\twonlawone[0]) + \cV(0,\twonlawtwo[0]) \big),  $$ 
\begin{eqnarray*}
\mathscr{D} & := & \int_0^{T} \frac{1}{2}  \Bigg[\frac{1}{(2N)^2} \sum_{i=1}^{2N}  \text{Tr} \bigg( a \big(Y^{i,2N}_s, \twonlaw[s]  \big)    \partial^2_{\mu}  {\cV} \big( s, \twonlaw[s]  \big)  (Y^{i,2N}_s, Y^{i,2N}_s) \bigg) \Bigg]  \\
&& -\frac{1}{2N^2}  \bigg[   \sum_{i=1}^{N}  \text{Tr} \bigg( a \big(Y^{i,2N,(1)}_s, \twonlawone[s]  \big)    \partial^2_{\mu}  {\cV} \big( s, \twonlawone[s]   \big)  (Y^{i,2N,(1)}_s, Y^{i,2N,(1)}_s) \bigg)  \\
&& + \sum_{i=N+1}^{2N}  \text{Tr} \bigg( a \big(Y^{i,2N,(2)}_s, \twonlawtwo[s]   \big)    \partial^2_{\mu}  {\cV} \big( s, \twonlawtwo[s]  \big)  (Y^{i,2N,(2)}_s, Y^{i,2N,(2)}_s) \bigg) \bigg] \, ds
\end{eqnarray*}
and
\begin{eqnarray*}
 \mathscr{S} & := & \sum_{i=1}^{2N}  \int_0^{T}  \frac{1}{2N} \partial_{\mu} {\cV} \big( s, \twonlaw[s] \big) (  Y^{i,2N}_s)^T  \sigma( Y^{i,2N}_s,  \twonlaw[s] )dW_s^i \\
&& -\frac{1}{2N} \bigg(\sum_{i=1}^{N} \int_0^T \partial_{\mu} {\cV} \big( \twonlawone[s] \big) (  Y^{i,2N,(1)}_s)^T  \sigma( Y^{i,2N,(1)}_s,  \twonlawone[s] )dW_s^i \\ 
&& + \sum_{i=N+1}^{2N}  \int_0^T \partial_{\mu} {\cV} \big( s, \twonlawtwo[s] \big) (  Y^{i,2N,(2)}_s)^T  \sigma( Y^{i,2N,(2)}_s, \twonlawtwo[s] )dW_s^i \bigg).
\end{eqnarray*}
By the assumptions on $b$,  $\sigma$ and $\Phi$, it follows from Theorem  \ref{eq:generalisationmainresult} that $ \cV \in \cM_4 \big(  [0,T] \times \mathcal{P}_2(\bR^d) \big)$. We can therefore see that 
\[
\bE[ \mathscr{D}^2 ] \lesssim 1/N^2.
\]
In particular, $ \cV(0,\cdot) \in \cM_4 \big(  \mathcal{P}_2(\bR^d) \big)$. Therefore, by  Theorem \ref{antithetic initial separation}, we obtain that
 \[
\bE[ \mathscr{A}^2 ] \lesssim 1/N^2.
\]
Hence, it remains to show that $\bE(\mathscr{S}^2)\lesssim 1/N^2$. Define $\Sigma(t,x,\mu):= \partial_{\mu} {\cV} \big( t, \mu \big) ( x )^T \sigma( x, \mu ).$ By the independence of the Brownian motions, we first rewrite $\bE[\mathscr{S}^2]$ as
\begin{eqnarray}
 \bE[\mathscr{S}^2]
 & = &\bE \bigg[ \bigg( \frac{1}{2N} \sum_{i=1}^{N} \int_0^{T} 
 \Sigma(s,Y^{i,2N}_s,\twonlaw[s])- \Sigma(s,Y^{i,2N,(1)}_s,\twonlawone[s])dW_s^i \bigg)^2 \bigg] \nonumber \\
&& +
\bE \bigg[ \bigg( \frac{1}{2N}  \sum_{i=N+1}^{2N} \int_0^{T}   \Sigma(s,Y^{i,2N}_s,\twonlaw[s])- \Sigma(s,Y^{i,2N,(2)}_s,\twonlawtwo[s])dW_s^i \bigg)^2 \bigg]. \nonumber 
\end{eqnarray}
Using the independence of the Brownian motions and It\^{o}'s isometry, 
\begin{equation*}
\begin{split}
 & \bE \bigg[ \bigg(\frac{1}{2N} \sum_{i=1}^{N}  \int_0^{T} 
 \Sigma(s,Y^{i,2N}_s,\twonlaw[s])- \Sigma(s,Y^{i,2N,(1)}_s,\twonlawone[s])dW_s^i \bigg)^2 \bigg] \\
& = \frac{1}{4N^2}  \sum_{i=1}^{N} \bE \bigg[ \bigg( \int_0^{T} 
 \Sigma(s,Y^{i,2N}_s,\twonlaw[s])- \Sigma(s,Y^{i,2N,(1)}_s,\twonlawone[s])dW_s^i \bigg)^2 \bigg]  \\
& = \frac{1}{4N^2}  \sum_{i=1}^{N}  \int_0^{T} 
\bE  \Big[ \Big\| \Sigma(s,Y^{i,2N}_s,\twonlaw[s])- \Sigma(s,Y^{i,2N,(1)}_s,\twonlawone[s]) \Big\|^2 \Big] \, ds.  
\end{split}
\end{equation*}
Note that $\cV \in \cM_4([0,T] \times \cP_2(\bR^d))$. Therefore, $\pmu \cV$ is Lipschitz continuous and uniformly bounded. Also, note that  $\sigma$ is Lipschitz continuous. By Theorem \ref{strong error}, 
\begin{eqnarray} 
&& \sup_{t \in [0,T]} \bE \big[ \big\| \Sigma(t,Y^{i,2N}_t, \twonlaw[t])- \Sigma ( t,X^{i}_t,\twocouplaw[t] ) \big \|^2 \big] \nonumber \\
& = &  \sup_{t \in [0,T]} \bE \big[ \big\| \pmu \cV(t, \twonlaw[t])(Y^{i,2N}_t)^T \sigma(Y^{i,2N}_t,\twonlaw[t]) -  \pmu \cV(t, \twocouplaw[t])(X^{i}_t)^T \sigma( X^{i}_t, \twocouplaw[t]) \big \|^2 \big]  \nonumber \\
& \lesssim & \sup_{t \in [0,T]} \bE \big[ \big\| \pmu \cV(t, \twonlaw[t])(Y^{i,2N}_t)^T \big( \sigma(Y^{i,2N}_t,\twonlaw[t]) -   \sigma( X^{i}_t, \twocouplaw[t]) \big) \big \|^2 \big]  \nonumber \\
&& +  \sup_{t \in [0,T]} \bE \big[ \big\| \big( \pmu \cV(t, \twonlaw[t])(Y^{i,2N}_t)^T -  \pmu \cV(t, \twocouplaw[t])(X^{i}_t)^T \big) \sigma( X^{i}_t, \twocouplaw[t]) \big \|^2 \big]  \nonumber \\
& \lesssim & \sup_{t \in [0,T]} \bE \big[ \big\| \pmu \cV(t, \twonlaw[t])(Y^{i,2N}_t)^T \big( \sigma(Y^{i,2N}_t,\twonlaw[t]) -   \sigma( X^{i}_t, \twocouplaw[t]) \big) \big \|^2 \big]  \nonumber \\
&& +  \sup_{t \in [0,T]} \Big( \bE \big[ \big\| \big( \pmu \cV(t, \twonlaw[t])(Y^{i,2N}_t) -  \pmu \cV(t, \twocouplaw[t])(X^{i}_t) \big\|^4 \big] \Big)^{1/2} \Big( \bE \big[ \big\| \sigma( X^{i}_t, \twocouplaw[t]) \big \|^4 \big] \Big)^{1/2}  \nonumber \\
& \lesssim & \sup_{t \in [0,T]} \bE \big[ \big|  Y^{i,2N}_t-  X^{i}_t |^2 \big] + \frac{1}{2N} \sum_{j=1}^{2N} \sup_{t \in [0,T]} \bE \big[ \big|  Y^{j,2N}_t-  X^{j}_t |^2 \big]  \nonumber \\
&& + \Big( \sup_{t \in [0,T]} \bE \big[ \big|  Y^{i,2N}_t-  X^{i}_t |^4 \big] \Big)^{1/2} + \Big( \frac{1}{2N} \sum_{j=1}^{2N} \sup_{t \in [0,T]} \bE \big[ \big|  Y^{j,2N}_t-  X^{j}_t |^4\big]  \Big)^{1/2} \lesssim \frac{1}{N}. 
\label{eq: main bound 1}
\end{eqnarray} 
Similarly, we can show that
\begin{equation} \label{eq: main bound 2}
 \sup_{t \in [0,T]} \bE \big[ \big\| \Sigma(t,Y^{i,2N,(1)}_t, \twonlawone[t])- \Sigma ( t,X^{i}_t,\couplaw[t] ) \big \|^2 \big] \lesssim \frac{1}{N}.
\end{equation} 
Next, we apply Proposition \ref{important proposition lemma strong error} to $\sigma$ and $ \pmu \cV(t. \cdot)(\cdot)$. (Note that the constant $C$ in Proposition \ref{important proposition lemma strong error} corresponding to $\varphi = \pmu \cV(t, \cdot)(\cdot)$ does not depend on time, since the first, second and third order derivatives in measure of this function are uniformly bounded in time.) By a similar calculation as \eqref{eq: main bound 1}, we obtain that

\begin{eqnarray} 
&& \sup_{t \in [0,T]} \bE \big[ \big\|\Sigma ( t,X^{i}_t, \twocouplaw[t] ) -  \Sigma(t,X^{i}_t, \mclaw[t])  \big\|^2 \big]\nonumber \\
& \lesssim &  \sup_{t \in [0,T]} \bE \big[ \big\| \pmu \cV(t, \twocouplaw[t])(X^{i}_t)^T \big( \sigma(X^{i}_t,\twocouplaw[t]) -   \sigma( X^{i}_t, \mclaw[t]) \big) \big \|^2 \big]  \nonumber \\
&& +  \sup_{t \in [0,T]} \Big( \bE \big[ \big\| \big( \pmu \cV(t, \twocouplaw[t])(X^{i}_t) -  \pmu \cV(t, \mclaw[t])(X^{i}_t) \big\|^4 \big] \Big)^{1/2} \Big( \bE \big[ \big\| \sigma( X^{i}_t, \mclaw[t]) \big \|^4 \big] \Big)^{1/2} \lesssim \frac{1}{N}. \nonumber \\ && \label{eq: main bound 3}
\end{eqnarray}  
Similarly,
\begin{equation} \label{eq: main bound 4}
\sup_{t \in [0,T]} \bE \big[ \big\|\Sigma ( t,X^{i}_t, \couplaw[t] ) -  \Sigma(t,X^{i}_t, \mclaw[t])  \big\|^2 \big] \lesssim \frac{1}{N}.
\end{equation}
A combination of \eqref{eq: main bound 1}, \eqref{eq: main bound 2}, \eqref{eq: main bound 3} and \eqref{eq: main bound 4} gives 
$$ \bE \bigg[ \bigg( \frac{1}{2N} \sum_{i=1}^{N} \int_0^{T} 
 \Sigma(s,Y^{i,2N}_s,\twonlaw[s])- \Sigma(s,Y^{i,2N,(1)}_s,\twonlawone[s])dW_s^i \bigg)^2 \bigg] \lesssim \frac{1}{N^2}.$$ 
 Similarly,
 $$ \bE \bigg[ \bigg( \frac{1}{2N}  \sum_{i=N+1}^{2N} \int_0^{T}   \Sigma(s,Y^{i,2N}_s,\twonlaw[s])- \Sigma(s,Y^{i,2N,(2)}_s,\twonlawtwo[s])dW_s^i \bigg)^2 \bigg] \lesssim \frac{1}{N^2}.$$ 
 Consequently, $\bE[\mathscr{S}^2] \lesssim \frac{1}{N^2}.$
\end{proof}
We now perform an analysis on the order of interactions of this algorithm by assuming that $b$ and $\sigma$ are of the forms \eqref{pth order b} and \eqref{pth order sigma} respectively. Recall that, by Theorem \ref{weak error Mckean particles},
 \[
| \bE[\Phi(\nlawell{T}{\ell})] - \Phi(\mclaw[T]) | \leq \frac{C}{N_{\ell}}. \label{eq:i} \tag{i}
\]
Moreover, by Theorem \ref{variance antithetic non-discretised}, we have
\[
\text{Var} \Big[ \Phi(\nlawelli{T}{\ell}{\theta}{\ell}) - \frac{1}{2} \Big( \Phi(\nlawoneelli{T}{\ell}{\theta}{\ell}) + \Phi(\nlawtwoelli{T}{\ell}{\theta}{\ell}) \Big) \Big] \leq \frac{C}{N^2_{\ell}}. \label{eq:ii} \tag{ii}
\]
By Definition \ref{def interactions complexity}, the order of interactions of the antithetic difference is bounded by
\[
\text{Order of interactions}  \Big[ \Phi(\nlawelli{T}{\ell}{\theta}{\ell}) - \frac{1}{2} \Big( \Phi(\nlawoneelli{T}{\ell}{\theta}{\ell}) + \Phi(\nlawtwoelli{T}{\ell}{\theta}{\ell}) \Big) \Big]  \leq CN_{\ell}^{p+1}. \label{eq:iii} \tag{iii}
\]

Properties \eqref{eq:i} to \eqref{eq:iii} allow us to conclude the order of interactions of the theoretical antithetic MLMC estimator. 
\begin{theorem}[Order of interactions of the  theoretical antithetic MLMC estimator \eqref{eq AMLMC}] \label{complexity no time discret} 
Assume \eqref{eq:Int}. Suppose that $b$ and $\sigma$ are of the forms \eqref{pth order b} and \eqref{pth order sigma} respectively. Furthermore, suppose that $b, \sigma \in \cM_4 \big( \bR^d  \times \mathcal{P}_2(\bR^d) \big) $ and $\Phi \in \cM_4 \big(  \mathcal{P}_2(\bR^d) \big) $.  Then there exist constants $C_1, C_2>0$ such that for any $\eps < e^{-1}$, there exist a value $L$ and a sequence $\{ M_{\ell} \}_{\ell=0}^L$ such that the mean-square error of $ {\cA}^{\text{A- MLMC}}$ $($given by \eqref{eq AMLMC}$)$ is bounded by
$$  \bE \big[ \big(  {\cA}^{\text{A- MLMC}} - \Phi(\mclaw[T]) \big)^2 \big]  \leq C_1 \eps^2 $$ 
and the order of interactions of $ {\cA}^{\text{A- MLMC}}$  is bounded by 
\[ \text{\emph{Order of interactions}} \, \big(  {\cA}^{\text{A- MLMC}} \big) \leq        \begin{cases} 
      C_2 \eps^{-2} (\log \eps)^2, & p=1, \\
      C_2 \eps^{-1-p}, & p>1.
   \end{cases}
\]
\end{theorem}
\begin{proof}
 The proof of this theorem is almost identical to the proof of Theorem 1 in \cite{cliffe2011multilevel} and is therefore omitted. Nonetheless, the proof for the complexity of the antithetic MLMC estimator with time discretisation (Theorem \ref{MLMC time discret complexity}) will be presented in detail for completeness. 
\end{proof}
\section{Antithetic MLMC with Euler time discretisation} \label{practical MLMC}
In this section, we construct an MLMC estimator in the same way as the previous section, but with time discretisation. We set 
$$ N_{\ell}:=2^{\ell}, \quad \quad h_{\ell}:= \frac{T}{N_{\ell}}, \quad \quad \ell \in \{0, \ldots, L \}.$$ 
We also set the two sub-particle systems to have the same number of particles. We define the pair of sub-particle systems to $\{Z^{i,2N,h}\}_{i=1}^{2N}$ as
\begin{equation*} 
\begin{split}
       & Z^{i,2N,(1),h}_t  =  \xi_i + \int_0^t  b \bigg(Z^{i,2N,(1),h}_{\eta(r)} , \etaeulertwonlawone[r] \bigg)  \,dr + \int_0^t  \sigma \bigg( Z^{i,2N,(1),h}_{\eta(r)},\etaeulertwonlawone[r] \bigg)    \,  dW^i_r, \quad 1 \leq i \leq N,  \\
    & Z^{i,2N,(2),h}_t  = \xi_i + \int_0^t  b \bigg( Z^{i,2N,(2),h}_{\eta(r)}, \etaeulertwonlawtwo[r] \bigg)  \,dr + \int_0^t  \sigma \bigg( Z^{i,2N,(2),h}_{\eta(r)},\etaeulertwonlawtwo[r]  \bigg)    \,  dW^i_r, \quad N+1 \leq i \leq 2N,
\end{split}
    \end{equation*}
    where
    $$ \eulertwonlawone[r]:= \frac{1}{N} \sum_{i=1}^N \delta_{Z^{i,2N,(1),h}_r} \quad \quad \text{ and } \quad \quad \eulertwonlawtwo[r]:= \frac{1}{N} \sum_{i=N+1}^{2N} \delta_{Z^{i,2N,(2),h}_r}. $$ 
    Therefore, we define the  MLMC estimator with time discretisation as 
    \begin{eqnarray}
        {\cA}^{\text{A- MLMC,$t$}}
       & := & \frac{1}{M_0} \sum_{\theta=1}^{M_0} \Phi(\eulernlawelli{T}{0}{\theta}{0}{h_0}) \nonumber \\
       && + \sum_{\ell=1}^L \bigg[ \frac{1}{M_{\ell}} \sum_{\theta=1}^{M_{\ell}} \Big[ \Phi(\eulernlawelli{T}{\ell}{\theta}{\ell}{h_{\ell}}) - \frac{1}{2} \Big( \Phi(\eulernlawoneelli{T}{\ell}{\theta}{\ell}{2h_{\ell}}) + \Phi(\eulernlawtwoelli{T}{\ell}{\theta}{\ell}{2h_{\ell}}) \Big) \Big] \bigg], \quad \quad \quad \label{eq: MLMC def }
    \end{eqnarray}
    where $\eulernlawelli{T}{\ell}{\theta}{\ell}{h_{\ell}}$, $\eulernlawoneelli{T}{\ell}{\theta}{\ell}{2h_{\ell}}$ and $ \eulernlawtwoelli{T}{\ell}{\theta}{\ell}{2h_{\ell}}$ are defined similarly as $\eulernlawellh{T}{\ell}{h_{\ell}}$, $\eulernlawoneellh{T}{\ell}{2h_{\ell}}$, and $\eulernlawtwoellh{T}{\ell}{2h_{\ell}}$ respectively, but correspond to the $\sum_{\ell=0}^L M_{\ell}$ independent {clouds} of particles indexed by $\ell \in \{0, \ldots, L \}$ and $\theta \in \{ 1, \ldots, M_{\ell} \}$. Each {cloud} (indexed by  $\ell$, $\theta$) has particles with  initial conditions $\xi_{i,\ell, \theta }$, $i \in \{1, \ldots, N_{\ell} \}$,  driven by Brownian motions $W^{i,\ell, \theta},$ $i \in \{1, \ldots, N_{\ell} \}$, where $\{ \xi_{i,\ell, \theta} \} $ and $\{ W^{i,\ell, \theta} \}$ are independent over $i$, $\ell$ and $\theta$. 
    
    To prove the analogue of Theorem \ref{variance antithetic non-discretised} with time discretisation, we need the following lemma that provides a strong error bound between the particle system \eqref{eq:particlesystem} and the Euler scheme \eqref{eq:Euler}. Since we require a higher-order approximation in time discretisation, we restrict ourselves to the case of constant diffusion, in order to avoid the complication of introducing the Milstein scheme of time discretisation. Note that, under \eqref{eq:Lip}, it follows by a standard Gronwall-type argument that
\begin{equation} \label{eq:boundedinL2}
      \sup_{N \in \bN} \sup_{u \in [0,T]} \bE \bigg[ \frac{1}{N} \sum_{i=1}^N |Y^{i,N}_{u}|^2 \bigg] < + \infty,  \quad  \sup_{N \in \bN} \sup_{u \in [0,T]} \bE \bigg[ \frac{1}{N} \sum_{i=1}^N |Z^{i,N,h}_{\eta(u)}|^2 \bigg] < + \infty,
\end{equation}
for some $C>0$.

    \begin{lemma} \label{YZ strong bound W2} 
     Suppose that $b \in \cM_2(\bR^d \times \cP_2(\bR^d))$ and $\sigma$ is constant. Then
    $$ \sup_{N \in \bN} \sup_{s \in [0,T]}\bE \big[ W_2(\nlaw[s], \eulerlaw[s])^2] \leq C h^2,$$
    for some constant $C$ that does not depend on $h$.
    \end{lemma}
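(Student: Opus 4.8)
The plan is to exploit the synchronous coupling already built into the two systems: the particles $Y^{i,N}$ in \eqref{eq:particlesystem} and the Euler particles $Z^{i,N,h}$ in \eqref{eq:Euler} share the initial data $\xi_i$ and the same Brownian motions $W^i$. Matching particle $i$ to particle $i$ therefore gives an admissible coupling of the two empirical measures, so that
$$ \bE\big[ W_2(\nlaw[s],\eulerlaw[s])^2\big] \le \frac{1}{N}\sum_{i=1}^N \bE\big| Y^{i,N}_s - Z^{i,N,h}_s\big|^2 =: g(s), $$
and it suffices to prove $g(s)\le Ch^2$ uniformly in $s\in[0,T]$ and $N\in\bN$. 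Writing $e^i_s:=Y^{i,N}_s-Z^{i,N,h}_s$ and using that $\sigma$ is constant, the stochastic integrals cancel and
$$ e^i_s = \int_0^s \Big[ b(Y^{i,N}_r,\nlaw[r]) - b(Z^{i,N,h}_{\eta(r)},\etaeulerlaw[r]) \Big] \, dr. $$
I would split the integrand into a Lipschitz part $b(Y^{i,N}_r,\nlaw[r]) - b(Z^{i,N,h}_r,\eulerlaw[r])$ and a time-discretisation part $b(Z^{i,N,h}_r,\eulerlaw[r]) - b(Z^{i,N,h}_{\eta(r)},\etaeulerlaw[r])$.

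For the Lipschitz part, the bounds on $\nabla_x b$ and $\pmu b$ coming from $b\in\cM_2$ give $|b(Y^{i,N}_r,\nlaw[r]) - b(Z^{i,N,h}_r,\eulerlaw[r])| \lesssim |e^i_r| + W_2(\nlaw[r],\eulerlaw[r])$, and the index-matching coupling gives $W_2(\nlaw[r],\eulerlaw[r])^2 \le \frac1N\sum_j |e^j_r|^2$. After squaring, applying Cauchy--Schwarz in $r$, and averaging over $i$, this term contributes $C\int_0^s g(r)\,dr$, which is precisely what Gronwall's inequality will absorb.

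The heart of the matter is the discretisation part. Here I would apply It\^o's formula to the process $r\mapsto b(Z^{i,N,h}_r,\eulerlaw[r])$, regarded as a smooth function of the $N$ particles; the measure-derivative terms $\pmu b$ and $\ptwomu b$ that arise are controlled precisely because $b\in\cM_2$. On each mesh interval $[\eta(r),r]$ this yields a finite-variation (generator) contribution and a martingale contribution. The generator contribution is uniformly bounded using \eqref{eq:boundedinL2} together with the uniform bounds on the derivatives of $b$; since $r-\eta(r)\le h$, integrating it once more in $r$ over $[0,s]$ bounds its square by $O(h^2)$. The martingale contribution has the form $\int_0^s\big(\int_{\eta(r)}^r(\cdots)\,dW_u\big)dr$, which naively is only $O(h)$ in $L^2$. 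The key step is a stochastic Fubini argument interchanging the $dr$ and $dW_u$ integrations, turning it into $\int_0^s (\bar\eta(u)-u)(\cdots)\,dW_u$, where $\bar\eta(u)$ is the right endpoint of the mesh interval containing $u$, so that $\bar\eta(u)-u\le h$; It\^o's isometry then produces the factor $(\bar\eta(u)-u)^2\le h^2$ under the time integral, upgrading the estimate to $O(h^2)$. The terms carrying the measure derivative come with an extra factor $1/N$ and are in fact smaller, which is what keeps the constant independent of $N$.

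I expect the stochastic Fubini step to be the main obstacle: one must justify the interchange for a measure-dependent integrand driven by all the $W^j$ simultaneously, and verify that every coefficient (built from $\nabla_x b$, $\pmu b$, $\ptwomu b$ evaluated along the Euler flow) satisfies the moment bounds needed \emph{uniformly in} $N$, which is exactly where \eqref{eq:boundedinL2} and the $\cM_2$ regularity are essential. Combining the two parts yields $g(s) \le C\int_0^s g(r)\,dr + Ch^2$, and Gronwall's inequality gives $g(s)\le Ch^2 e^{CT}$ uniformly in $s\in[0,T]$ and $N\in\bN$, which is the claim.
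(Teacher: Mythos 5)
Your proposal is correct and shares the paper's skeleton (synchronous coupling of $Y^{i,N}$ and $Z^{i,N,h}$ through the common $(\xi_i,W^i)$, reduction to $g(s)=\frac1N\sum_i\bE|Y^{i,N}_s-Z^{i,N,h}_s|^2$, splitting of the drift difference into a Lipschitz part and a one-step discretisation part, It\^o's formula applied to $b$ along the Euler flow on each mesh interval, and Gronwall), but the mechanism by which you extract the second factor of $h$ is genuinely different. The paper never isolates $\int_0^s\big(b(Z^{i,N,h}_r,\eulerlaw[r])-b(Z^{i,N,h}_{\eta(r)},\etaeulerlaw[r])\big)\,dr$ as a standalone object; it expands $(Y^{i,N}-Z^{i,N,h})^2$ by It\^o and then applies a second It\^o formula to the product $(Y^{i,N}_s-Z^{i,N,h}_s)\big(b(Z^{i,N,h}_s,\eulerlaw[s])-b(Z^{i,N,h}_{\eta(s)},\eulerlaw[\eta(s)])\big)$, so that upon taking expectations the martingale contributions vanish and the surviving terms ($\cI_1+\cI_2$ in the paper's notation) are pointwise of order $h$ on each mesh interval, the remaining factor of $h$ coming from the length of that interval. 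You instead exploit that constant $\sigma$ makes $Y^{i,N}-Z^{i,N,h}$ absolutely continuous, bound the discretisation term in $L^2$ on its own, and upgrade the naive $O(h)$ to $O(h^2)$ by stochastic Fubini plus It\^o isometry --- equivalently, by the orthogonality of the martingale increments of $b(Z^{i,N,h}_\cdot,\eulerlaw[\cdot])$ over distinct mesh intervals, which makes the double time integral of $\bE[M_rM_{r'}]$ concentrate on diagonal blocks of total measure $Th$. Both routes rest on the same ingredients: $b\in\cM_2$ for the second-order space and measure derivatives appearing in the It\^o expansion, \eqref{eq:boundedinL2} for $N$-uniform moment control of the generator terms, and the $1/N$ prefactor on $\pmu b$ to keep the sum over the $N$ driving Brownian motions of order one (a point you correctly flag as essential for $N$-uniformity). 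Your decoupled version is arguably cleaner for this specific lemma, and the stochastic Fubini step is unproblematic here since the relevant integrands are bounded; the paper's product-It\^o version is the one that transfers to the weak-error estimate of Theorem \ref{weak error Mckean euler}, where the discretisation error must remain coupled to a derivative of $\cV$ and no exact integral representation of the difference is available.
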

    \begin{proof}
     The proof is presented in dimension one, for simplicity of notations. By It\^{o}'s formula,
     $$ (Y^{i,N}_t - Z^{i,N,h}_t)^2 = 2 \int_0^t (Y^{i,N}_s - Z^{i,N,h}_s)\big( b(Y^{i,N}_s, \nlaw[s]) - b(Z^{i,N,h}_{\eta(s)}, \etaeulerlaw[s]) \big) \,ds.$$ 
     Take $0 \leq t' \leq t \leq T$. Then
     \begin{eqnarray}
     \frac{1}{N} \sum_{i=1}^N \bE (Y^{i,N}_{t'} - Z^{i,N,h}_{t'})^2 & = & \frac{2}{N} \sum_{i=1}^N \bE \bigg[ \int_0^{t'} (Y^{i,N}_s - Z^{i,N,h}_s)\big( b(Y^{i,N}_s, \nlaw[s]) - b(Z^{i,N,h}_{s}, \eulerlaw[s]) \big) \,ds \bigg] \nonumber \\
     & & + \frac{2}{N} \sum_{i=1}^N \bE \bigg[ \int_0^{t'} (Y^{i,N}_s - Z^{i,N,h}_s)\big(  b(Z^{i,N,h}_{s}, \eulerlaw[s]) - b(Z^{i,N,h}_{\eta(s)}, \etaeulerlaw[s]) \big) \,ds \bigg]. \nonumber \\
     && \label{eq: ito strong error YZ}
     \end{eqnarray}
     We first bound the first term of \eqref{eq: ito strong error YZ}. 
     \begin{eqnarray}
    &&  \frac{2}{N} \sum_{i=1}^N \bE \bigg[ \int_0^{t'} (Y^{i,N}_s - Z^{i,N,h}_s)\big( b(Y^{i,N}_s, \nlaw[s]) - b(Z^{i,N,h}_{s}, \eulerlaw[s]) \big) \,ds \bigg] \nonumber \\
    & \leq & C \bE \bigg[ \frac{1}{N} \sum_{i=1}^N \int_0^{t'} |Y^{i,N}_s - Z^{i,N,h}_s| \bigg( |Y^{i,N}_s - Z^{i,N,h}_s| + \Big( \frac{1}{N} \sum_{j=1}^N |Y^{i,N}_s - Z^{i,N,h}_s|^2 \Big)^{1/2} \bigg) \,ds \bigg] \nonumber \\  & \leq & \frac{C}{N} \sum_{i=1}^N \int_0^{t'} \bE |Y^{i,N}_s - Z^{i,N,h}_s|^2 \,ds \leq C \int_0^t  \sup_{u \in [0,s]} \bigg[  \frac{1}{N} \sum_{i=1}^N \bE |Y^{i,N}_u - Z^{i,N,h}_u|^2 \bigg] \,ds. \label{eq:first term strong error YZ}  
     \end{eqnarray}
     To bound the second term of \eqref{eq: ito strong error YZ}, we proceed as in the proof of Theorem \ref{weak error Mckean euler} (through Proposition 3.1 of \cite{chassagneux2014probabilistic} that relates real derivatives to L-derivatives) by applying It\^{o}'s formula to the process
     $$ \Big\{(Y^{i,N}_s - Z^{i,N,h}_s) \big(b(Z^{i,N,h}_s, \eulerlaw[s])- b(Z^{i,N,h}_{t_0}, \eulerlaw[t_0]) \big) \Big\}_{s \geq t_0}, $$ 
     which gives
     \begin{eqnarray}
     && (Y^{i,N}_s - Z^{i,N,h}_s) \big(b(Z^{i,N,h}_s, \eulerlaw[s])- b(Z^{i,N,h}_{t_0}, \eulerlaw[t_0]) \big) \nonumber \\
     & = & \int_{t_0}^s \big(b(Z^{i,N,h}_u, \eulerlaw[u])- b(Z^{i,N,h}_{t_0}, \eulerlaw[t_0]) \big) \, d(Y^{i,N}_u- Z^{i,N,h}_u) \nonumber \\
     && + \sum_{j \neq i} \int_{t_0}^s (Y^{i,N}_u- Z^{i,N,h}_u) \Big( \frac{1}{N} \pmu b (Z^{i,N,h}_u, \eulerlaw[u])(Z^{j,N,h}_u) \Big) \, dZ^{j,N,h}_u \nonumber \\
     && + \int_{t_0}^s (Y^{i,N}_u- Z^{i,N,h}_u) \Big( \frac{1}{N} \pmu b (Z^{i,N,h}_u, \eulerlaw[u])(Z^{i,N,h}_u) + \partial_x b(Z^{i,N,h}_u, \eulerlaw[u]) \Big) \, dZ^{i,N,h}_u \nonumber \\
     && + \frac{1}{2} \sum_{j \neq i} \int_{t_0}^s (Y^{i,N}_u- Z^{i,N,h}_u) \Big( \frac{1}{N} \partial_v \pmu b (Z^{i,N,h}_u, \eulerlaw[u])(Z^{j,N,h}_u) \nonumber \\
     && + \frac{1}{N^2} \ptwomu b(Z^{i,N,h}_u, \eulerlaw[u])( Z^{j,N,h}_u, Z^{j,N,h}_u) \Big) \, d \lev Z^{j,N,h} \rev_u \nonumber \\
     && + \frac{1}{2} \int_{t_0}^s (Y^{i,N}_u- Z^{i,N,h}_u) \Big( \frac{1}{N} \partial_v \pmu b (Z^{i,N,h}_u, \eulerlaw[u])(Z^{i,N,h}_u) \nonumber \\
     && + \frac{1}{N^2} \ptwomu b(Z^{i,N,h}_u, \eulerlaw[u])( Z^{i,N,h}_u, Z^{i,N,h}_u) + \frac{2}{N} \partial_x \pmu b(Z^{i,N,h}_u, \eulerlaw[u])(Z^{i,N,h}_u) \nonumber \\
     && + \partial^2_x b(Z^{i,N,h}_u, \eulerlaw[u]) \Big) \, d \lev Z^{i,N,h} \rev_u. \nonumber 
     \end{eqnarray}
     Putting $t_0 = \eta(s)$, taking average of $i$ from $1$ to $N$, taking expectation and rewriting terms, we have
$$   \frac{1}{N} \sum_{i=1}^N \bE \bigg[ (Y^{i,N}_s - Z^{i,N,h}_s) \big(b(Z^{i,N,h}_s, \eulerlaw[s])- b(Z^{i,N,h}_{\eta(s)}, \eulerlaw[\eta(s)]) \big) \bigg] = \cI_1 + \cI_2,$$ 
where 
$$ \cI_1:= \frac{1}{N} \sum_{i=1}^N \bE \bigg[ \int_{\eta(s)}^s \big(b(Z^{i,N,h}_u, \eulerlaw[u])- b(Z^{i,N,h}_{\eta(s)}, \eulerlaw[\eta(s)]) \big) \big(b(Y^{i,N}_u, \nlaw[u])- b(Z^{i,N,h}_{\eta(s)}, \eulerlaw[\eta(s)]) \big) \,du \bigg] $$ 
and $$ \cI_2:= \frac{1}{N} \sum_{i=1}^N \bE \bigg[ \int_{\eta(s)}^s (Y^{i,N}_u - Z^{i,N,h}_u) \cD^i_u \,du \bigg] , $$ 
where \begin{eqnarray}
\cD^i_u & := & \frac{1}{N} \sum_{j=1}^N \bigg( \pmu b(Z^{i,N,h}_u, \eulerlaw[u])(Z^{j,N,h}_u) b(Z^{j,N,h}_{\eta(u)}, \etaeulerlaw[u]) \bigg) + \px b(Z^{i,N,h}_u, \eulerlaw[u]) b(Z^{i,N,h}_{\eta(u)}, \etaeulerlaw[u]) \nonumber \\
&& + \frac{1}{2} \sigma^2 \sum_{j=1}^N \bigg( \frac{1}{N^2} \ptwomu b (Z^{i,N,h}_u, \eulerlaw[u])(Z^{j,N,h}_u, Z^{j,N,h}_u) + \frac{1}{N} \partial_v \pmu b (Z^{i,N,h}_u,  \eulerlaw[u])(Z^{j,N,h}_u) \bigg) \nonumber \\
&& + \frac{1}{2} \sigma^2 \bigg( \frac{2}{N} \px \pmu b(Z^{i,N,h}_u, \eulerlaw[u])(Z^{i,N,h}_u) + \ptwox b(Z^{i,N,h}_u, \eulerlaw[u]) \bigg). \nonumber 
\end{eqnarray}
By the hypothesis on $b$, all derivatives of $b$ are uniformly bounded. Moreover, by \eqref{eq:Lip}, $b$ has linear growth in space and measure. Therefore,
$$ \frac{1}{N} \sum_{i=1}^N \bE|\cD^i_u|^2 \leq C \Big( 1+\frac{1}{N} \sum_{i=1}^N \bE | Z^{i,N,h}_{\eta(u)}|^2 \Big).  $$ 
Then, by \eqref{eq:boundedinL2},
$$  \sup_{u \in [0,T]}  \Big[ \frac{1}{N} \sum_{i=1}^N \bE|\cD^i_u|^2 \Big] \leq C.  $$ 
By first applying the Cauchy-Schwarz inequality to the expectation operator and then to the sum,
\begin{eqnarray}
\cI_2 & \leq & \int_{\eta(s)}^s \bigg( \frac{1}{N} \sum_{i=1}^N \bE|Y^{i,N}_u - Z^{i,N,h}_u|^2 \bigg)^{1/2} \bigg( \frac{1}{N} \sum_{i=1}^N \bE|\cD^i_u|^2 \bigg)^{1/2} \,du \nonumber \\
& \leq & C \bigg( \sup_{u \in [0,s]} \frac{1}{N} \sum_{i=1}^N \bE|Y^{i,N}_u - Z^{i,N,h}_u|^2 \bigg)^{1/2} h \nonumber\\
& \leq & C \bigg( \frac{1}{2} \sup_{u \in [0,s]} \frac{1}{N} \sum_{i=1}^N \bE|Y^{i,N}_u - Z^{i,N,h}_u|^2 + \frac{1}{2} h^2 \bigg). \label{eq:second term strong error YZ}  
\end{eqnarray}
Next, we rewrite $\cI_1$ as
\begin{eqnarray}
\cI_1 & = & \frac{1}{N} \sum_{i=1}^N \bE \bigg[ \int_{\eta(s)}^s \big(b(Z^{i,N,h}_u, \eulerlaw[u])- b(Z^{i,N,h}_{\eta(s)}, \eulerlaw[\eta(s)]) \big) \big(b(Y^{i,N}_u, \nlaw[u])- b(Z^{i,N,h}_{u}, \eulerlaw[u]) \big) \,du \bigg] \nonumber \\
&& + \frac{1}{N} \sum_{i=1}^N \bE \bigg[ \int_{\eta(s)}^s \big(b(Z^{i,N,h}_u, \eulerlaw[u])- b(Z^{i,N,h}_{\eta(s)}, \eulerlaw[\eta(s)]) \big)^2 \, du \bigg]. \nonumber 
\end{eqnarray}
It is clear that 
\begin{equation}  \frac{1}{N} \sum_{i=1}^N \bE \bigg[ \int_{\eta(s)}^s \big(b(Z^{i,N,h}_u, \eulerlaw[u])- b(Z^{i,N,h}_{\eta(s)}, \eulerlaw[\eta(s)]) \big)^2 \, du \bigg] \leq Ch^2. \label{eq:third term strong error YZ}   \end{equation}  
By the Cauchy-Schwarz inequality and \eqref{eq:Lip}, the first term of $\cI_1$ is bounded by
\begin{eqnarray}
&&  \frac{1}{N} \sum_{i=1}^N \bE \bigg[ \int_{\eta(s)}^s  \big(b(Z^{i,N,h}_u, \eulerlaw[u])- b(Z^{i,N,h}_{\eta(s)}, \eulerlaw[\eta(s)]) \big) \big(b(Y^{i,N}_u, \nlaw[u])- b(Z^{i,N,h}_{u}, \eulerlaw[u]) \big) \,du \bigg]  \nonumber \\
& \leq & \frac{1}{N} \sum_{i=1}^N \int_{\eta(s)}^s \bigg( \bE \bigg|  b(Z^{i,N,h}_u, \eulerlaw[u])- b(Z^{i,N,h}_{\eta(s)}, \eulerlaw[\eta(s)])  \bigg|^2 \bigg)^{1/2} \nonumber \\
&& \, \, \bigg( \bE \bigg|  b(Y^{i,N}_u, \nlaw[u])- b(Z^{i,N,h}_{u}, \eulerlaw[u])\bigg|^2 \bigg)^{1/2} \, du \nonumber \\
& \leq & \frac{1}{N} \sum_{i=1}^N C \sqrt{h} \int_{\eta(s)}^s \bigg( \bE \bigg|  b(Y^{i,N}_u, \nlaw[u])- b(Z^{i,N,h}_{u}, \eulerlaw[u])\bigg|^2 \bigg)^{1/2} \, du \nonumber \\
& \leq & \frac{1}{N} \sum_{i=1}^N C \sqrt{h} \int_{\eta(s)}^s \bigg( \bE|Y^{i,N}_u - Z^{i,N,h}_u |^2 + \frac{1}{N} \sum_{j=1}^N  \bE|Y^{j,N}_u - Z^{j,N,h}_u |^2 \bigg)^{1/2} \,du  \nonumber \\
& \leq & \frac{2}{N} \sum_{i=1}^N C \sqrt{h} \int_{\eta(s)}^s \big( \bE|Y^{i,N}_u - Z^{i,N,h}_u |^2 \big)^{1/2} \,du \nonumber \\
& \leq & 2C h^{3/2} \bigg[ \sup_{u \in [0,s]} \bigg( \frac{1}{N} \sum_{i=1}^N  \bE|Y^{i,N}_u - Z^{i,N,h}_u |^2 \bigg) \bigg]^{1/2} \nonumber \\
& \leq & C \bigg( h^{3} + \sup_{u \in [0,s]} \bigg( \frac{1}{N} \sum_{i=1}^N  \bE|Y^{i,N}_u - Z^{i,N,h}_u |^2 \bigg) \bigg). \label{eq:fourth term strong error YZ}    \end{eqnarray}
A combination of \eqref{eq: ito strong error YZ}, \eqref{eq:first term strong error YZ}, \eqref{eq:second term strong error YZ}, \eqref{eq:third term strong error YZ}  and \eqref{eq:fourth term strong error YZ}  gives
$$  \sup_{u \in [0,t]} \bigg[ \frac{1}{N} \sum_{i=1}^N \bE (Y^{i,N}_{u} - Z^{i,N,h}_{u})^2 \bigg] \leq C \bigg( \int_0^t \sup_{u \in [0,s]} \bigg[  \frac{1}{N} \sum_{i=1}^N \bE |Y^{i,N}_u - Z^{i,N,h}_u|^2 \bigg] \,ds + h^2 \bigg),  \quad \forall t \in [0,T],$$ 
which implies by Gronwall's inequality that
$$  \sup_{u \in [0,T]} \bigg[ \frac{1}{N} \sum_{i=1}^N \bE (Y^{i,N}_{u} - Z^{i,N,h}_{u})^2 \bigg] \leq Ch^2. $$ 
Since the constant $C$ does not depend on $N$, we conclude that
$$ \sup_{N \in \bN} \sup_{s \in [0,T]}\bE \big[ W_2(\nlaw[s], \eulerlaw[s])^2]  \leq \sup_{N \in \bN} \sup_{s \in [0,T]} \bigg[ \frac{1}{N} \sum_{i=1}^N \bE (Y^{i,N}_{s} - Z^{i,N,h}_{s})^2 \bigg] \leq Ch^2. $$ 
    \end{proof}
    A combination of Lemma \ref{YZ strong bound W2}  and Theorem \ref{variance antithetic non-discretised} immediately gives the following result.
    \begin{theorem}[Variance of antithetic difference] \label{variance antithetic discretised}
    Assume \eqref{eq:Int}. Suppose that $b \in \cM_4 \big( \bR^d  \times \mathcal{P}_2(\bR^d) \big) $ and $\Phi \in \cM_4 \big(  \mathcal{P}_2(\bR^d) \big) $. Moreover, suppose that $\sigma$ is constant.  Then 
    \[ \text{Var} \Big[ \Phi (\eulernlawellh{T}{}{h}) -  \frac{1}{2} \big( \Phi (\eulernlawoneellh{T}{}{2h}) + \Phi (\eulernlawtwoellh{T}{}{2h}) \big) \Big] \leq  C \Big( \frac{1}{N^2} + h^2 \Big), 
    \]
    where $C$ is a constant that depends on $\Phi$, $b$, $\sigma$ and $T$, but does not depend on $N$ or $h$.
    \end{theorem}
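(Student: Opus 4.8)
The plan is to reduce the time-discretised antithetic difference to the non-discretised one already handled by Theorem \ref{variance antithetic non-discretised}, paying for the reduction with the strong discretisation estimate of Lemma \ref{YZ strong bound W2}. Introduce the two quantities
\begin{equation*}
D^h := \Phi(\eulertwonlaw[T]) - \tfrac{1}{2}\big(\Phi(\eulertwonlawone[T]) + \Phi(\eulertwonlawtwo[T])\big), \qquad D := \Phi(\twonlaw[T]) - \tfrac{1}{2}\big(\Phi(\twonlawone[T]) + \Phi(\twonlawtwo[T])\big),
\end{equation*}
where $D$ is built from the particle system \eqref{eq:particlesystem} driven by the \emph{same} initial data $\{\xi_i\}$ and Brownian motions $\{W^i\}$ as the Euler scheme. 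Since $\mathrm{Var}[D^h]\le \bE[(D^h)^2]$, it suffices to control the second moment. First I would split $D^h = D + (D^h - D)$ and apply $(a+b)^2 \le 2a^2 + 2b^2$ to obtain
\begin{equation*}
\bE[(D^h)^2] \le 2\,\bE[D^2] + 2\,\bE[(D^h - D)^2].
\end{equation*}
Because $b,\sigma \in \cM_4$ and $\Phi \in \cM_4$, Theorem \ref{variance antithetic non-discretised} applies directly and bounds the first term by $C/N^2$.

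For the second term I would expand $D^h - D$ into the three differences $\Phi(\eulertwonlaw[T]) - \Phi(\twonlaw[T])$ together with the two analogous sub-system differences, and estimate each separately after a further use of the $L^2$ triangle inequality. The structural input is that $\Phi \in \cM_4$ has a bounded first-order L-derivative, hence is Lipschitz continuous in the $W_2$ metric, so that for the full system
\begin{equation*}
\big|\Phi(\eulertwonlaw[T]) - \Phi(\twonlaw[T])\big| \lesssim W_2\big(\eulertwonlaw[T], \twonlaw[T]\big),
\end{equation*}
and similarly for the two sub-systems. Taking squared expectations and invoking Lemma \ref{YZ strong bound W2} — applied with step $h$ to the full $2N$-particle system and with step $2h$ to each $N$-particle sub-system, the factor $(2h)^2$ still being $O(h^2)$ — yields $\bE[W_2(\cdots)^2] \le C h^2$ for each term. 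Combining, $\bE[(D^h-D)^2] \lesssim h^2$, whence $\bE[(D^h)^2] \le C\big(N^{-2} + h^2\big)$, which is the asserted bound.

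The argument is essentially mechanical, and the one point that genuinely demands care is that Lemma \ref{YZ strong bound W2} provides a \emph{strong} (same-noise, pathwise) $L^2$ estimate, so the coupling between \eqref{eq:particlesystem} and \eqref{eq:Euler} must be the one sharing initial conditions and driving Brownian motions. Only with this coupling does the $W_2$ bound transfer to each of the individual terms of $D^h - D$; a merely weak discretisation estimate would be insufficient here, since we must control the three terms \emph{before} the antithetic cancellation is exploited. Finally, because $\sigma$ is assumed constant, Lemma \ref{YZ strong bound W2} is available at the sharp order $h^2$ under exactly the stated hypotheses ($b \in \cM_4 \subset \cM_2$), so no further regularity is required.
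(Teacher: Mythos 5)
Your proposal is correct and is essentially the paper's own argument: the paper simply states that the result follows immediately by combining Lemma \ref{YZ strong bound W2} with Theorem \ref{variance antithetic non-discretised}, and your decomposition $D^h = D + (D^h - D)$, with the $W_2$-Lipschitz continuity of $\Phi$ and the same-noise strong coupling of Lemma \ref{YZ strong bound W2} controlling the second piece, is exactly the intended combination spelled out. Your remark that a merely weak discretisation estimate would not suffice here is a valid and worthwhile observation.
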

    
      For an estimator involving an Euler numerical scheme with discretisation step $h$, its \emph{computational complexity} is defined by     $$ \text{Computational complexity} := h^{-1}  \Big( \text{Order of interactions of estimator} \Big) . $$
    As before, we perform an analysis on the complexity of this algorithm  by assuming that $b$ is of the form \eqref{pth order b} and that $\sigma$ is constant. By Theorem \ref{weak error Mckean euler}, since $h_{\ell}= \frac{T}{N_{\ell}}$, 
 \[
| \bE[\Phi(\eulernlawellh{T}{\ell}{h_{\ell}})] - \Phi(\mclaw[T]) | \leq \frac{C}{N_{\ell}}. \label{eq:I} \tag{I}
\]
Moreover, by Theorem \ref{variance antithetic discretised}, we have
\[
\text{Var} \Big[ \Phi(\eulernlawelli{T}{\ell}{\theta}{\ell}{h_{\ell}}) - \frac{1}{2} \Big( \Phi(\eulernlawoneelli{T}{\ell}{\theta}{\ell}{2h_{\ell}}) + \Phi(\eulernlawtwoelli{T}{\ell}{\theta}{\ell}{2h_{\ell}}) \Big) \Big] \leq \frac{C}{N^2_{\ell}}. \label{eq:II} \tag{II}
\]Finally, by Definition \ref{def interactions complexity}, the complexity of the antithetic difference is bounded by
\[
\text{Complexity} \Big[ \Phi(\eulernlawelli{T}{\ell}{\theta}{\ell}{h_{\ell}}) - \frac{1}{2} \Big( \Phi(\eulernlawoneelli{T}{\ell}{\theta}{\ell}{2h_{\ell}}) + \Phi(\eulernlawtwoelli{T}{\ell}{\theta}{\ell}{2h_{\ell}}) \Big) \Big]  \leq CN_{\ell}^{p+2}. \label{eq:III} \tag{III}
\]
\begin{theorem}[Complexity of antithetic MLMC with time discretisation for estimator \eqref{eq: MLMC def }] \label{MLMC time discret complexity}
Assume \eqref{eq:Int}. Suppose that $b$ is of the form \eqref{pth order b}. Furthermore, suppose that $b \in \cM_4 \big( \bR^d  \times \mathcal{P}_2(\bR^d) \big) $, $\Phi \in \cM_4 \big(  \mathcal{P}_2(\bR^d) \big) $ and $\sigma$ is constant.  Then there exist constants $C_1, C_2>0$ such that for any $\eps < e^{-1}$, there exist a value $L$ and a sequence $\{ M_{\ell} \}_{\ell=0}^L$ such that the mean-square error of $ {\cA}^{\text{A- MLMC,$t$}}$ is bounded by
$$  \bE \big[ \big(  {\cA}^{\text{A- MLMC,$t$}} - \Phi(\mclaw[T]) \big)^2 \big]  \leq C_1 \eps^2 $$ 
and the complexity of $ {\cA}^{\text{A- MLMC,$t$}}$  is bounded by 
\[ \text{\emph{Complexity}} \, \big(  {\cA}^{\text{A- MLMC,$t$}}\big) \leq  
      C_2 \eps^{-2-p}. 
\]
\end{theorem}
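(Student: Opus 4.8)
The plan is to run the standard multilevel complexity argument of Giles (Theorem 1 in \cite{cliffe2011multilevel}) and carry the bookkeeping out explicitly, feeding in the three ingredients \eqref{eq:I}, \eqref{eq:II} and \eqref{eq:III} already in hand. Write $P_\ell := \Phi(\eulernlawellh{T}{\ell}{h_\ell})$ for the level-$\ell$ functional and, for the summands of ${\cA}^{\text{A- MLMC,$t$}}$ in \eqref{eq: MLMC def }, set $Y_0 := P_0$ and let $Y_\ell$ ($1 \leq \ell \leq L$) be the antithetic difference $\Phi(\eulernlawellh{T}{\ell}{h_\ell}) - \tfrac12(\Phi(\eulernlawoneellh{T}{\ell}{2h_\ell}) + \Phi(\eulernlawtwoellh{T}{\ell}{2h_\ell}))$. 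Since $2N_{\ell-1} = N_\ell$ and $2h_\ell = h_{\ell-1}$, each of the two sub-particle systems at level $\ell$ is distributed exactly as the level-$(\ell-1)$ scheme, so $\bE[\Phi(\eulernlawoneellh{T}{\ell}{2h_\ell})] = \bE[\Phi(\eulernlawtwoellh{T}{\ell}{2h_\ell})] = \bE[P_{\ell-1}]$ and therefore $\bE[Y_\ell] = \bE[P_\ell] - \bE[P_{\ell-1}]$. Summing the telescope and using the independence of the clouds across $\ell$ and $\theta$ gives $\bE[{\cA}^{\text{A- MLMC,$t$}}] = \bE[P_L]$.

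First I would split $\bE[({\cA}^{\text{A- MLMC,$t$}} - \Phi(\mclaw[T]))^2]$ into squared bias plus variance. The bias is $\bE[P_L] - \Phi(\mclaw[T])$, which by \eqref{eq:I} is at most $C/N_L = C\,2^{-L}$; choosing $L = \lceil \log_2(\sqrt2\,C/\eps)\rceil$ forces the squared bias below $\eps^2/2$ while keeping $N_L = 2^L \lesssim \eps^{-1}$. Writing $V_\ell := \mathrm{Var}[Y_\ell]$ and $C_\ell$ for the per-sample cost, the independence of the clouds gives $\mathrm{Var}[{\cA}^{\text{A- MLMC,$t$}}] = \sum_{\ell=0}^L V_\ell/M_\ell$, and \eqref{eq:II}--\eqref{eq:III} supply $V_\ell \lesssim 2^{-2\ell}$ (with $V_0 = O(1)$) and $C_\ell \lesssim 2^{3\ell}$.

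Next I would allocate samples by the Lagrange-optimal rule $M_\ell = \big\lceil 2\eps^{-2}\sqrt{V_\ell/C_\ell}\,\sum_{k=0}^L\sqrt{V_kC_k}\,\big\rceil$. Discarding the ceilings in the variance bound yields $\sum_\ell V_\ell/M_\ell \leq \tfrac{\eps^2}{2}\big(\sum_\ell\sqrt{V_\ell C_\ell}\big)\big/\big(\sum_k\sqrt{V_kC_k}\big) = \eps^2/2$, so the total mean-square error is at most $\eps^2$; retaining the $+1$ from each ceiling in the cost bound yields $\mathrm{Cost}({\cA}^{\text{A- MLMC,$t$}}) = \sum_\ell M_\ell C_\ell \leq 2\eps^{-2}\big(\sum_\ell\sqrt{V_\ell C_\ell}\big)^2 + \sum_\ell C_\ell$. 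The decisive computation is the growth of these sums: here $\sqrt{V_\ell C_\ell} \lesssim \sqrt{2^{-2\ell}2^{3\ell}} = 2^{\ell/2}$, so the geometric sum is dominated by its top term, $\sum_{\ell=0}^L\sqrt{V_\ell C_\ell} \lesssim 2^{L/2} \lesssim \eps^{-1/2}$; squaring, the first cost term is $\lesssim \eps^{-2}\cdot\eps^{-1} = \eps^{-3}$, and the additive term from the ceilings is $\sum_\ell C_\ell \lesssim 2^{3L} = N_L^3 \lesssim \eps^{-3}$, giving the claimed bound $C_2\,\eps^{-3}$.

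In Giles' classification this is the regime $\beta = 2 < \gamma = 3$ with weak order $\alpha = 1$, for which the exponent is $2 + (\gamma-\beta)/\alpha = 3$, exactly what the computation reproduces; the side condition $\alpha \geq \tfrac12\min(\beta,\gamma) = 1$ holds with equality. I expect the only genuinely delicate point to be the bookkeeping around the ceilings: one must retain the $+1$ contributed by each $M_\ell$, verify that the resulting additive $\sum_\ell C_\ell = O(\eps^{-3})$ does not dominate (here it merely matches), and check $M_\ell \geq 1$ throughout. None of this is hard, but it is precisely what distinguishes the correct power $\eps^{-3}$ from a spurious larger one.
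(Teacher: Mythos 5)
Your proposal is correct and follows essentially the same route as the paper: both instantiate the standard Giles complexity argument with the ingredients \eqref{eq:I}--\eqref{eq:III}, choose $L \asymp \log_2(\eps^{-1})$, and take $M_\ell$ proportional to $\eps^{-2}2^{-5\ell/2}\sum_k 2^{k/2}$ (your Lagrange-optimal rule reduces to exactly the paper's explicit formula once $V_\ell \lesssim 2^{-2\ell}$ and $C_\ell \lesssim 2^{3\ell}$ are substituted), with the same careful retention of the $+1$ from each ceiling yielding the additive $\sum_\ell C_\ell = O(\eps^{-3})$ term. The only cosmetic difference is that you make the telescoping identity $\bE[{\cA}^{\text{A- MLMC,$t$}}] = \bE[P_L]$ explicit via the distributional match of the sub-systems with the level-$(\ell-1)$ scheme, which the paper leaves implicit.
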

\begin{proof}
As in Theorem \ref{complexity no time discret},  the proof of this theorem is also almost identical to the proof of Theorem 1 in \cite{cliffe2011multilevel}. Nonetheless, we present the proof with explicit expressions for $L$ and $\{ M_{\ell} \}_{\ell=0}^L$ so that practitioners can implement this algorithm easily. Set $$ L:= \ceil{\log_2(\sqrt{2} \eps^{-1})}, \quad \quad M_{\ell}:= \ceil{ 2 \eps^{-2} 2^{pL/2} (1-2^{-p/2})^{-1} 2^{-(p+4) \ell/2}}, \, \, \ell \in \{0, \ldots, L \}. $$ 
By standard decomposition of the mean-square error, we have  
$$ \text{Mean-square error} =  \text{Var}  ({\cA}^{\text{A- MLMC,$t$}}) +  (\bE ({\cA}^{\text{A- MLMC,$t$}}) - \Phi(\mclaw[T]))^2.  $$ 
By the choice of $L$, $ 2^{-L} \leq \frac{\eps}{\sqrt{2}}.$ Therefore, by Property \eqref{eq:I},
\begin{equation} | \bE({\cA}^{\text{A- MLMC,$t$}}) - \Phi(\mclaw[T])|^2 = |\bE[\Phi(\eulernlawellh{T}{L}{h_{L}})] - \Phi(\mclaw[T])|^2 \leq \Big( \frac{C}{N_{L}} \Big)^2 = (C 2^{-L})^2 \leq C^2 \Big( \frac{\eps^2}{2} \Big). \label{eq:weak error bound MLMC A}   \end{equation}
On the other hand, by Property \eqref{eq:II} and the choice of $\{ M_{\ell} \}_{\ell=0}^L$, 
\begin{eqnarray}
\text{Var} ({\cA}^{\text{A- MLMC,$t$}})  \leq  \sum_{\ell=0}^L \frac{1}{M^2_{\ell}} \bigg[  \sum_{\theta=1}^{M_{\ell}} \frac{C}{N^2_{\ell}} \bigg] \leq \sum_{\ell=0}^L \frac{C}{M_{\ell}} 2^{-2\ell} & \leq & \sum_{\ell=0}^L C2^{-2\ell} \Big( 2^{-1} \eps^2 2^{-pL/2} (1-2^{-p/2}) 2^{(p+4) \ell/2} \Big)  \nonumber \\
& = & C 2^{-1} \eps^2 2^{-pL/2}(1-2^{-p/2}) \sum_{\ell=0}^L 2^{p\ell/2} \nonumber \\
& < & \frac{1}{2} C \eps^2. \nonumber 
\end{eqnarray}
This verifies that the mean-square error is bounded by $\frac{1}{2} (C^2+C) \eps^2$. Next, we note that
$$ M_{\ell} \leq 2 \eps^{-2} 2^{pL/2} (1-2^{-p/2})^{-1} 2^{-(p+4) \ell/2} + 1$$
and hence, by Property \eqref{eq:III}, 
\begin{equation} \text{Complexity} ({\cA}^{\text{A- MLMC,$t$}}) \leq C \bigg( \sum_{\ell=0}^L 2 \eps^{-2} 2^{pL/2} (1-2^{-p/2})^{-1} 2^{-(p+4) \ell/2} 2^{(p+2)\ell} + \sum_{\ell=0}^L 2^{(p+2) \ell} \bigg).  \label{eq:cost A two parts} \end{equation}
Note that the choice of $L$ implies that $2^L \leq 2 \sqrt{2} \eps^{-1}.$ 
\begin{eqnarray}
\sum_{\ell=0}^L 2 \eps^{-2} 2^{pL/2} (1-2^{-p/2})^{-1} 2^{-(p+4) \ell/2} 2^{(p+2) \ell} & = & 2 \eps^{-2} 2^{pL/2} (1-2^{-p/2})^{-1} \sum_{\ell=0}^L 2^{p\ell/2} \nonumber \\
& < & 2 \eps^{-2} 2^{pL/2} (1-2^{-p/2})^{-1} \Big( 2^{pL/2} (1-2^{-p/2})^{-1} \Big) \nonumber \\
& = & 2 \eps^{-2} 2^{pL} (1-2^{-p/2})^{-2} \nonumber \\
& \leq & 2 \big( 2 \sqrt{2} \big)^p (1-2^{-p/2})^{-2} \eps^{-2-p}. \label{eq:cost A first part}
\end{eqnarray}
Similarly,
\begin{equation}
    \sum_{\ell=0}^L 2^{(p+2) \ell} \leq \frac{2^{(p+2)L}}{1-2^{-(p+2)}} \leq \frac{(2 \sqrt{2})^{p+2}}{1-2^{-(p+2)}} \eps^{-(p+2)}. \label{eq:cost A second part}
\end{equation}
A combination of \eqref{eq:cost A two parts}, \eqref{eq:cost A first part} and \eqref{eq:cost A second part} finally gives
$$ \text{Complexity} ({\cA}^{\text{A- MLMC,$t$}}) \leq C \bigg( 2 \big( 2 \sqrt{2} \big)^p (1-2^{-p/2})^{-2} + \frac{(2 \sqrt{2})^{p+2}}{1-2^{-(p+2)}} \bigg) \eps^{-2-p}.$$ 
\end{proof}
\begin{subappendices}
\renewcommand{\thesection}{A}%
\section{Appendix: A review of linear functional derivatives and L-derivatives} \label{ linear functional derivatives and L-derivatives appendix} 
    
  Our method of proof is based on the theory of calculus on   the Wasserstein space. A substantial portion of the appendix is extracted from a recent work \cite{chassagneux2019weak}. We make an intensive use of the so-called ``L-derivatives'' and ``linear functional derivatives'' that we recall now, following essentially \cite{cardaliaguet2015master}. We also introduce higher-order versions of these derivatives as they are needed in the proofs. \\
${} $ \\
\textbf{Linear functional derivatives}\label{sec lfd}  ${} $ \\ 

A continuous function $\frac{\delta U}{\delta m}: \cP_2(\bR^d) \times \bR^d \to \bR$ is said to be the \emph{linear functional derivative} of $U: \cP_2(\bR^d) \to \bR$, if 
\begin{itemize}
\item for any bounded set $\cK \subset \cP_2(\R^d)$, $y \mapsto \frac{\delta U}{\delta m}(m,y)$ has at most quadratic growth in $y$ uniformly in $m \in \cK$,
\item for any $m, m' \in \cP_2(\bR^d)$,
 \begin{align}\label{eq de first order deriv}
 U(m')- U(m) = \int_0^1 \int_{\bR^d} \frac{\delta U}{\delta m}( (1-s)m + sm',y) \, (m'-m)(dy) \, ds. 
 \end{align}
\end{itemize}

\noindent For the purpose of our work, we need to introduce derivatives at any order $p\ge 1$. 



\begin{definition}
For any $p \ge 1$, the $p$-th order linear functional of the function $U$ is a continuous
function from $\frac{\delta^p U}{\delta m^p}: \cP_2(\R^d)\times (\R^d)^{p-1}\times \R^d \rightarrow \R$ satisfying
\begin{itemize}
\item for any bounded set $\cK \subset \cP_2(\R^d)$, $(y,y') \mapsto \frac{\delta^p U}{\delta m^p}(m,y,y')$ has at most quadratic growth in $(y,y')$ uniformly in $m \in \cK$,
\item for any $m, m' \in \cP_2(\bR^d)$,
\begin{align*}
 \frac{\delta^{p-1} U}{\delta m^{p-1}}(m',y) - \frac{\delta^{p-1} U}{\delta m^{p-1}}(m,y) = \int_0^1 \int_{\bR^{d}} \frac{\delta^p U}{\delta m^p}((1-s)m +sm',y,y') \, (m'-m)( \ud y') \,\ud s,
\end{align*}
provided that the $(p-1)$-th order derivative is well defined.
\end{itemize}
\end{definition}
{
\noindent The above derivatives are defined up to an additive constant via \eqref{eq de first order deriv}.
They are normalised by
\begin{equation}
   \frac{\delta^p U}{\delta m^p}(m,y_1, \ldots, y_p) = 0, \quad \text{ if } y_i=0 \, \, \text{ for some } i \in \{1, \ldots, p \}.
    \label{eq: normalisation linear functional deriatives}
\end{equation} }

 ${} $ \\
\textbf{L-derivatives}\label{sec lion d}  ${} $ \\ 


The above notion of linear functional derivatives is not enough for our work. We shall need to consider further derivatives in the non-measure argument of the derivative function.

If the function $y \mapsto \frac{\delta U}{\delta m}(m,y)$ is of class $\cC^1$, we consider the \emph{intrinsic} derivative of $U$ that we denote
\begin{align*}
\partial_\mu U(m,y) := \partial_y \frac{\delta U}{\delta m}(m,y)\;.
\end{align*}
The notation is borrowed from the literature on mean field games and corresponds to the notion of ``L-derivative'' introduced by P.-L. Lions in his lectures at Coll\`{e}ge de France \cite{lions2014cours}. Traditionally, it is introduced by considering a lift on an $L^2$ space of the function $U$ and using the Fr\'{e}chet differentiability of this lift on this Hilbert space. The equivalence between the two notions is proved in \cite[Tome I, Chapter 5]{carmona2017probabilistic}, where the link with the notion of derivatives used in optimal transport theory is also made.

In this context, higher order derivatives are introduced by iterating the operator $\partial_\mu$ and the derivation in the non-measure arguments. Namely, at order $2$, one considers
\begin{align*}
\cP_2(\R^d)\times \R^d \ni   (m,y) \mapsto \partial_y \partial_\mu U(m,y) \text{ and } \
\cP_2(\R^d)\times \R^d\times \R^d \ni   (m,y,y') \mapsto  \partial^2_{\mu} U(m,y,y') \;.
\end{align*}

\noindent Inspired by the work \cite{crisan2017smoothing}, for any $k \in \bN$, we formally define  the  higher order derivatives in measures through the following iteration (provided that they actually exist): for any $k \geq 2$, $(i_1, \ldots, i_k) \in \{ 1, \ldots, d \}^k$ and $x_1, \ldots, x_k \in \bR^d$, the function $\partial^k_{\mu} f:\mathcal{P}_2(\bR^d) \times (\bR^d)^{ k} \to (\bR^d)^{\otimes k}$ is defined by
\begin{equation} \bigg( \partial^k_{\mu} f( \mu, x_1, \ldots, x_k) \bigg)_{(i_1, \ldots, i_k)} := \bigg( \pmu \bigg( \Big( \partial^{k-1}_{\mu} f( \cdot, x_{1}, \ldots, x_{k-1}) \Big)_{(i_1, \ldots, i_{k-1})} \bigg)(\mu,x_k) \bigg)_{i_k}, \label{eq:generalformulaintro} \end{equation} 
and its corresponding mixed derivatives in space $\partial^{\ell_k}_{v_k} \ldots \partial^{\ell_1}_{v_1} \partial^k_{\mu} f:\mathcal{P}_2(\bR^d) \times (\bR^d)^{ k} \to (\bR^d)^{\otimes (k+ \ell_1 +\ldots \ell_k)} $ are defined by
\begin{equation} \bigg( \partial^{\ell_k}_{v_k} \ldots \partial^{\ell_1}_{v_1} \partial^k_{\mu} f( \mu, x_1, \ldots, x_k) \bigg)_{(i_1, \ldots, i_k)} := \frac{\partial^{\ell_k}}{\partial x^{\ell_k}_k} \ldots \frac{\partial^{\ell_1}}{\partial x^{\ell_1}_1} \bigg[ \bigg( \partial^k_{\mu} f( \mu, x_1, \ldots, x_k) \bigg)_{(i_1, \ldots, i_k)} \bigg], \quad \ell_1 \ldots \ell_k \in \bN \cup \{0 \}. \label{eq:generalformulamixed} \end{equation}
Since this notation for higher order derivatives in measure is quite cumbersome, we introduce the following multi-index notation for brevity. This notation was first proposed in \cite{crisan2017smoothing}. 
\begin{definition}[Multi-index notation]
Let $ n, \ell$ be non-negative integers. Also, let $\bm{\beta}=(\beta_1, \ldots, \beta_n)$ be an $n$-dimensional vector of non-negative integers. Then we call any ordered tuple of the form $(n,\ell, \bm{\beta})$ or $(n,\bm{\beta})$ a \emph{multi-index}. For a function $f:\R^d \times\cP_2(\R^d) \to \R$, the derivative $D^{(n,\ell, \bm{\beta})} f(x, \mu , v_1, \ldots, v_n)$  is defined as 
$$D^{(n,\ell, \bm{\beta})} f (x, \mu , v_1, \ldots, v_n) :=  \partial^{\beta_n}_{v_n} \ldots \partial^{\beta_1}_{v_1} \partial^{\ell}_x \pnmu f( x, \mu , v_1, \ldots, v_n)$$
if this derivative is well-defined.
For any function $\Phi: \mathcal{P}_2 ( \bR^d) \to \bR$, we define
$$D^{(n,\bm{\beta})} \Phi( \mu , v_1, \ldots, v_n) :=  \partial^{\beta_n}_{v_n} \ldots \partial^{\beta_1}_{v_1} \pnmu \Phi( \mu , v_1, \ldots, v_n), $$ 
if this derivative is well-defined. Finally, we also define the \emph{order} \footnote{ We do not consider `zeroth' order derivatives in our definition, i.e. at least one of $n$, $\beta_1, \ldots, \beta_n$ and $\ell$ must be non-zero, for every multi-index $\big(n, \ell, (\beta_1, \ldots, \beta_n) \big)$.} $ |(n,\ell, \bm{\beta})| $ (resp.  $|(n,\bm{\beta})|$ ) by
\begin{equation} |(n,\ell, \bm{\beta})|:= n+ \beta_1 + \ldots \beta_n + \ell , \quad \quad |(n,\bm{\beta})|:= n+ \beta_1 + \ldots \beta_n .  \label{eq:orderdef}
\end{equation}
\end{definition}

In our proofs, we aim to formulate sufficient conditions purely in terms of regularity of the drift and diffusion functions, as well as the test function.  A class $\cM_k$ of regularity in differentiating measures is proposed. 

\begin{definition}[Class $\cM_k$ of $k$th order differentiable functions] ${}$ \label{eq:classmk} \\
\begin{enumerate}[(i)]
\item \label{Mk b and sigma} The functions $b$ and $\sigma$ belong to class $\cM_k( \bR^d \times \cP_2 ( \bR^d)) $, if the derivatives  $D^{(n,\ell, \bm{\beta})} b (x,\mu,v_1, \ldots, v_n)$ and $D^{(n,\ell, \bm{\beta})} \sigma (x,\mu, v_1, \ldots, v_n)$ exist for every multi-index $(n,\ell, \bm{\beta})$ such that $|(n,\ell, \bm{\beta})| \leq k$ and 
\begin{enumerate}[(a)]
\item \begin{equation}   \big| D^{(n,\ell, \bm{\beta})} b (x,\mu, v_1, \ldots, v_n) \big|  
 \leq  C, \quad \quad  \big| D^{(n,\ell, \bm{\beta})}  \sigma (x,\mu, v_1, \ldots, v_n) \big| \leq C, \label{eq:boundbandsigma} \end{equation}
\item \begin{eqnarray} && \Big| D^{(n,\ell, \bm{\beta})} b (x,\mu, v_1, \ldots, v_n) -D^{(n,\ell, \bm{\beta})} b  (x', \mu' ,v'_1, \ldots, v'_n)  \Big| \nonumber \\
& \leq & C \bigg( |x-x'| + \sum_{i=1}^n|v_i-v'_i| + W_2 (\mu,\mu') \bigg), \nonumber  \\
&&  \Big| D^{(n,\ell, \bm{\beta})} \sigma (x,\mu, v_1, \ldots, v_n) -D^{(n,\ell, \bm{\beta})} \sigma  (x', \mu' ,v'_1, \ldots, v'_n)  \Big| \nonumber \\
& \leq & C \bigg( |x-x'| + \sum_{i=1}^n|v_i-v'_i| + W_2 (\mu,\mu') \bigg),  \label{eq:Lipbandsigma}
\end{eqnarray}
\end{enumerate}
for any $x,x',v_1,v'_1,\ldots, v_n, v'_n \in \bR^d$ and $\mu, \mu' \in \mathcal{P}_2(\bR^d)$, for some constant $C>0$.
\item  Any function $\Phi : \mathcal{P}_2 ( \bR^d) \to \bR$ is said to be in $\cM_k( \cP_2 ( \bR^d)) $, if $D^{(n, \bm{\beta})} \Phi (\mu,v_1, \ldots, v_n)$ exists for every  multi-index $(n, \bm{\beta})$ such that $|(n, \bm{\beta})| \leq k$ and \begin{enumerate}[(a)]
\item \begin{equation}  \Big| D^{(n, \bm{\beta})} \Phi (\mu, v_1, \ldots, v_n) \Big|  \leq C,  \label{eq:boundf} \end{equation}
\item \begin{eqnarray} &&   \Big| D^{(n, \bm{\beta})} \Phi(\mu, v_1, \ldots, v_n) -D^{(n, \bm{\beta})} \Phi( \mu' ,v'_1, \ldots, v'_n)  \Big|  \nonumber \\
& \leq &  C \bigg(  \sum_{i=1}^n|v_i-v'_i| + W_2 (\mu,\mu') \bigg),  \label{eq:Lipf}
\end{eqnarray}
\end{enumerate}
for any $v_1,v'_1,\ldots, v_n, v'_n \in \bR^d$ and $\mu, \mu' \in \mathcal{P}_2(\bR^d)$, for some constant $C>0$.
\item A function $\cV: [0,T] \times   \mathcal{P}_2 ( \bR^d) \to \bR$ is said  to be in $\cM_k \big( [0, T]  \times \mathcal{P}_2(\bR^d) \big) $, if $\cV(\cdot,  \mu)$ is in $C^1([0,T])$, for each $\mu \in \mathcal{P}_2 ( \bR^d)$  and $\cV(s,  \cdot) \in \cM_k \big(   \mathcal{P}_2(\bR^d) \big) $, for each $s \in [0,T]$, where the $L^{\infty}$ and Lipschitz bounds of the derivatives of $\cV(s, \cdot)$ are uniform in time, i.e. they only depend on $T$.
\end{enumerate} 
\end{definition}
{
As for the first order case, we can establish the following relationship with linear functional derivatives, see e.g. \cite{cardaliaguet2015master} for the correspondence up to order 2,
\begin{equation}
   \partial^n_\mu U(\cdot) =  \partial_{y_n} \frac{\delta }{\delta m} \dots \partial_{y_1} \frac{\delta }{\delta m} U(\cdot) = \partial_{y_n}\dots \partial_{y_1}\frac{\delta^n }{\delta m^n} U(\cdot) \,, \label{eq: connection lions functional derivatives high order} 
\end{equation}
provided one of the two derivatives is well-defined.
}
The following proposition (Lemma 2.5 from \cite{chassagneux2019weak}) relates regularity of L-derivatives with that of linear functional derivatives. We first define class $\cM^{L}_k$ that characterises $k$th order linear functional derivatives. 
\begin{definition}[Class $\cM^{L}_k$ of $k$th order differentiable functions in linear functional derivatives] \label{eq: def MLk} 
A function $U: \cP_2(\bR^d) \to \bR$ is said to be in class $\cM^{L}_k( \cP_2 (\bR^d))$ if it is $k$ times differentiable in the sense of linear functional derivatives and satisfies 
\begin{equation} \bigg| \frac{\delta^k U}{\delta m^k} (m,y_1, \ldots, y_k) \bigg| \leq C \big( |y_1|^k + \ldots |y_k|^k \big), \label{eq: first order lions linear functional def} \end{equation}
for some constant $C>0$ that does not depend on $m$ and $y_1, \ldots, y_k$. 
\end{definition}
\begin{proposition}[Lemma 2.5 from \cite{chassagneux2019weak}] \label{theorem lions linear functional} 
Suppose that $U \in \cM_k ( \cP_2 (\bR^d))$. Then
\begin{equation} \bigg| \frac{\delta^k U}{\delta m^k} (m,y_1, \ldots, y_k) \bigg| \leq \frac{(\sqrt{d})^k}{k} \| \partial^k_{\mu} U \|_{\infty} \big( |y_1|^k + \ldots |y_k|^k \big). \label{eq: first order lions linear functional} \end{equation}
Consequently, $U \in \cM^{L}_k( \cP_2 (\bR^d))$.
\end{proposition}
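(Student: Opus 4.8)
The plan is to recover the $k$-th order linear functional derivative $\frac{\delta^k U}{\delta m^k}$ from the intrinsic (L-)derivative $\partial^k_\mu U$ by integration, exploiting the normalisation \eqref{eq: normalisation linear functional deriatives}. The starting point is the identity \eqref{eq: connection lions functional derivatives high order}, which expresses $\partial^k_\mu U(m,y_1,\ldots,y_k)$ as the iterated spatial gradient $\partial_{y_k}\cdots\partial_{y_1}\frac{\delta^k U}{\delta m^k}(m,y_1,\ldots,y_k)$, regarded as a tensor in $(\bR^d)^{\otimes k}$ whose $(i_1,\ldots,i_k)$-component is the scalar mixed derivative $\partial_{y_{1,i_1}}\cdots\partial_{y_{k,i_k}}\frac{\delta^k U}{\delta m^k}$. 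Fixing $m$ and writing $g(y_1,\ldots,y_k):=\frac{\delta^k U}{\delta m^k}(m,y_1,\ldots,y_k)$, the normalisation guarantees that $g$ vanishes whenever any one of its arguments equals $0$.

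First I would integrate $g$ along rays from the origin, one variable at a time. Since $g(0,y_2,\ldots,y_k)=0$, the fundamental theorem of calculus gives $g=\int_0^1 \langle y_1,\nabla_{y_1}g(t_1y_1,y_2,\ldots,y_k)\rangle\,dt_1$; because $\nabla_{y_1}g$ inherits the vanishing at $y_2=0$ (differentiate $g\equiv 0$ on $\{y_2=0\}$ in the $y_1$ directions), the argument repeats in $y_2$, and so on through all $k$ variables. This yields the representation
\begin{equation*}
\frac{\delta^k U}{\delta m^k}(m,y_1,\ldots,y_k)=\int_{[0,1]^k} \big\langle y_1\otimes\cdots\otimes y_k,\ \partial^k_\mu U(m,t_1y_1,\ldots,t_ky_k)\big\rangle\,dt_1\cdots dt_k,
\end{equation*}
where the bracket is the full contraction of two elements of $(\bR^d)^{\otimes k}$.

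From here the bound is routine. By Cauchy--Schwarz in $(\bR^d)^{\otimes k}$ together with $|y_1\otimes\cdots\otimes y_k|=|y_1|\cdots|y_k|$, the integrand is dominated by $|y_1|\cdots|y_k|$ times the Hilbert--Schmidt norm of $\partial^k_\mu U$. Since each of the $d^k$ components of $\partial^k_\mu U$ is bounded in absolute value by $\|\partial^k_\mu U\|_\infty$ (the $\bm{\beta}=0$, $n=k$ case of \eqref{eq:boundf}), its Hilbert--Schmidt norm is at most $(\sqrt{d})^k\|\partial^k_\mu U\|_\infty$. Finally, AM--GM gives $|y_1|\cdots|y_k|\le \tfrac1k(|y_1|^k+\cdots+|y_k|^k)$, which produces the prefactor $\tfrac{(\sqrt{d})^k}{k}$ and establishes \eqref{eq: first order lions linear functional}. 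As the resulting constant is independent of $m$ and of $y_1,\ldots,y_k$, the membership $U\in\cM^L_k$ follows at once from Definition \ref{eq: def MLk}.

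The main obstacle is the bookkeeping in the iterated integration: at each stage one must verify that the relevant mixed spatial gradient of $g$ still vanishes on the coordinate hyperplane of the \emph{next} variable, so that the fundamental theorem of calculus applies with no boundary contribution. This follows by differentiating the identity $g\equiv 0$ on $\{y_j=0\}$ in the already-integrated directions, but it tacitly requires enough smoothness of $\frac{\delta^k U}{\delta m^k}$ in its spatial arguments, which is precisely what membership in $\cM_k$ supplies through \eqref{eq: connection lions functional derivatives high order}. The only other point demanding care is the norm convention responsible for the dimensional factor $(\sqrt{d})^k$ rather than a dimension-free constant, namely passing from a componentwise sup bound to the Hilbert--Schmidt norm.
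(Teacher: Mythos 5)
The paper does not actually prove this proposition: it is imported wholesale as Lemma 2.5 of \cite{chassagneux2019weak}, so there is no in-paper argument to compare yours against. Judged on its own, your reconstruction is correct and is the natural argument. The iterated fundamental-theorem-of-calculus step is sound: the normalisation \eqref{eq: normalisation linear functional deriatives} gives vanishing on each coordinate hyperplane $\{y_j=0\}$, and differentiating that identity in the already-integrated variables shows the relevant mixed gradients still vanish there, so no boundary terms appear; the resulting full mixed derivative $\partial_{y_k}\cdots\partial_{y_1}\frac{\delta^k U}{\delta m^k}$ is exactly $\partial^k_{\mu}U$ by \eqref{eq: connection lions functional derivatives high order}. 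Cauchy--Schwarz in $(\bR^d)^{\otimes k}$, the passage from a componentwise sup to the Hilbert--Schmidt norm (cost $(\sqrt{d})^{k}$), and AM--GM in the form $|y_1|\cdots|y_k|\le \frac1k(|y_1|^k+\cdots+|y_k|^k)$ reproduce the constant $\frac{(\sqrt d)^k}{k}$ exactly as stated, and the uniformity in $m$ gives membership in $\cM^L_k$ per Definition \ref{eq: def MLk}.

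The one point you flag but do not fully resolve is the right one to flag: membership in $\cM_k$ is phrased in terms of the L-derivatives $D^{(n,\bm\beta)}U$, whereas your argument presupposes that the linear functional derivatives $\frac{\delta^j U}{\delta m^j}$, $j\le k$, exist with the stated smoothness in their spatial arguments so that \eqref{eq: connection lions functional derivatives high order} can be read in the direction you need. That existence statement is itself part of what the cited Lemma 2.5 establishes; the relation in the appendix is only asserted ``provided one of the two derivatives is well-defined.'' Your quantitative bound is therefore complete conditional on that existence, which is the standard and intended reading.
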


\renewcommand{\thesection}{B}%
\section{Appendix: Weak error analysis} \label{appendix weak error analysis} 
In this section, we consider the following weak errors of the form 
\[
\Big| \Phi (\mclaw[T]) - \bE[\Phi(\nlaw[T])] \Big| \quad \quad \text{ and } \quad \quad \Big| \Phi (\mclaw[T]) - \bE[\Phi(\eulerlaw[T])] \Big|, 
\] 
for functionals $\Phi: \mathcal{P}_2( \bR^d) \to \bR$.  The method of analysis follows from the work \cite{chassagneux2019weak}.
For any square-integrable random variable $\eta$, we define  
\begin{equation} \label{eq:McKeanflow1}
    X^{s, \eta}_t = \eta + \int_s^t  b(X^{s, \eta}_r,   \rvlaw[{X^{s,  \eta}_r}]  )  \,dr + \int_s^t \sigma (X^{s, \eta}_r,   \rvlaw[{X^{s, \eta}_r}] )  \,dW_r, \quad t \in [s,T].
\end{equation}
A starting point of our investigation is the Feynman-Kac theorem for functionals of measures established in Theorem 7.2 of \cite{buckdahn2017mean} (for the case $k=2$). The generalisation to $k>2$ is done in Theorem 2.15 of \cite{chassagneux2019weak}. Note that the condition $\cM_1(\bR^d \times \mathcal{P}_2 (\bR^d) ) $ automatically implies \eqref{eq:Lip}.
\begin{theorem} \label{eq:generalisationmainresult}
Let $k \geq 2$ be an integer. Suppose that  $b, \sigma \in \cM_k(\bR^d \times \mathcal{P}_2 (\bR^d) ) $.
  We consider a function $\cV : [0,T] \times \mathcal{P}_2 (\bR^d) \to \bR $ defined by 
\begin{equation}  \cV( s, \rvlaw[\eta]) = \Phi \big( \rvlaw[{X^{s, \eta}_T}] \big) ,  \label{eq: defofflow}
\end{equation}
  for some function $\Phi:  \cP_2(\bR^d)   \to \bR$ in $\cM_k( \cP_2(\bR^d) )$. Then  $\cV \in \cM_k ( [0,T] \times \mathcal{P}_2 (\bR^d)) $ and satisfies the PDE
\begin{equation}  \label{eq pde measure} \begin{cases} 
       \partial_s \cV(s, \mu) + \int_{\bR^d}  \big[ \partial_{\mu} \cV (s, \mu) (x) b( x, \mu)  + \frac{1}{2} \text{Tr} \big( \partial_v \partial_{\mu} \cV  ( s,\mu) ( x) a( x, \mu) \big) \big] \, \mu (dx) =0, & s \in (0,T), \\
      &  \\
      \cV(T,\mu) = \Phi ( \mu), &    \end{cases}
\end{equation} 
where $a=(a_{i,k})_{1 \leq i,k \leq  d }: \bR^d \times \mathcal{P}_2(\bR^d) \to \bR^d \otimes \bR^d$ denotes the diffusion operator
$$ a_{i,k} (x,\mu) := \sum_{j=1}^m  \sigma_{i,j} (x,\mu) \sigma_{k,j} (x,\mu), \quad \quad \forall x \in \bR^d, \quad \forall \mu \in \mathcal{P}_2(\bR^d).$$
\end{theorem}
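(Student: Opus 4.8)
The plan is to follow the strategy of \cite{buckdahn2017mean} and its higher-order extension in \cite{chassagneux2019weak}, exploiting the flow (semigroup) structure of the McKean-Vlasov dynamics together with the Itô formula for functionals of measures. First I would check that $\cV$ is well-defined: since $b,\sigma \in \cM_k \subset \cM_1$ are Lipschitz, \eqref{eq:McKeanflow1} admits a unique strong solution, and by uniqueness in law the marginal $\rvlaw[{X^{s,\eta}_T}]$ depends on $\eta$ only through $\rvlaw[\eta]$. This is what makes $\cV(s,\cdot)$ a genuine functional on $\cP_2(\bR^d)$ rather than a functional of the random variable $\eta$.

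The core of the proof is the regularity statement $\cV \in \cM_k([0,T] \times \cP_2(\bR^d))$. Here I would work with the $L^2$-lift, regarding $\eta$ as a square-integrable random variable and differentiating $\eta \mapsto \Phi(\rvlaw[{X^{s,\eta}_T}])$ in the Fréchet sense on $L^2$, which is equivalent to computing the L-derivatives $\pmu\cV, \dots, \partial^k_\mu\cV$. Differentiating the SDE \eqref{eq:McKeanflow1} in the initial condition produces a hierarchy of linearized SDEs for the first- through $k$th-order derivative processes, whose coefficients involve the derivatives of $b$ and $\sigma$ up to order $k$, including the nonlocal terms coming from the measure dependence. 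Using the uniform bounds and Lipschitz estimates furnished by $b,\sigma\in\cM_k$ together with the Burkholder-Davis-Gundy and Gronwall inequalities applied successively up the hierarchy, one obtains uniform-in-time bounds and Lipschitz continuity for these derivatives and their mixed spatial derivatives. Since $\Phi \in \cM_k$ supplies the matching control at the terminal time $T$, the estimates close and give membership in $\cM_k$; the $C^1$-in-time regularity then follows a posteriori from the PDE.

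Finally, to obtain \eqref{eq pde measure}, I would use the flow property $\cV(t, \rvlaw[{X^{s,\eta}_t}]) = \Phi(\rvlaw[{X^{s,\eta}_T}])$, valid for all $t \in [s,T]$, so that the deterministic map $t \mapsto \cV(t,\mu_t)$ with $\mu_t := \rvlaw[{X^{s,\eta}_t}]$ is constant. Applying the chain rule (Itô formula) for functionals along the flow of marginal laws from \cite{buckdahn2017mean,chassagneux2014probabilistic} to this map, and noting that since $\mu_t$ is deterministic there is no stochastic integral while the Itô correction produces the second-order term, one finds $\frac{d}{dt}\cV(t,\mu_t) = \partial_t \cV(t,\mu_t) + \int_{\bR^d}\big[\pmu \cV(t,\mu_t)(x)\, b(x,\mu_t) + \tfrac12 \mathrm{Tr}\big(\partial_v \pmu \cV(t,\mu_t)(x)\, a(x,\mu_t)\big)\big]\,\mu_t(dx) = 0$. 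As $\rvlaw[\eta]$, hence $\mu_s$, ranges over all of $\cP_2(\bR^d)$ and $s$ is arbitrary, this holds at every $(t,\mu)$, which is precisely \eqref{eq pde measure}; the terminal condition $\cV(T,\mu)=\Phi(\mu)$ is immediate from the definition.

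I expect the regularity step to be the main obstacle: propagating the $\cM_k$ bounds through the linearized flow requires controlling the nonlocal measure-derivative terms at each order, and the bookkeeping of the mixed L- and spatial derivatives — together with their Lipschitz continuity uniformly in time — is the genuinely technical part. This is exactly the content of Theorem 2.15 of \cite{chassagneux2019weak}, which I would invoke and adapt rather than reprove in full.
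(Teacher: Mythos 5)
Your proposal is correct and matches the paper's treatment: the paper does not prove this theorem itself but imports it directly, citing Theorem 7.2 of \cite{buckdahn2017mean} for $k=2$ and Theorem 2.15 of \cite{chassagneux2019weak} for $k>2$, which is exactly the route you take. Your sketch of the underlying argument (well-posedness via weak uniqueness, regularity via the linearized hierarchy on the $L^2$-lift, and the PDE via the flow property and the It\^{o} formula for deterministic flows of measures) faithfully summarises the content of those references before deferring to them.
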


We make the following observations before starting the main proof. The finite dimensional projection  $V:[0,T]\times(\bR^d)^N \to \bR$ is defined by
\begin{equation}
    V(s,x_1,\ldots,x_N):= \cV \bigg( s, \frac{1}{N}\sum_{i=1}^N\delta_{x_i} \bigg). \label{eq:finitedimproj}
\end{equation} Proposition 3.1 of \cite{chassagneux2014probabilistic} allows us to conclude that  $V$ is differentiable in the time component and twice-differentiable in the space components. Hence it is legitimate to apply the classical It\^{o}'s formula to $V$.

 Next, by the flow property of  \eqref{eq:McKeanflow1} (see equation (3.5) in \cite{buckdahn2017mean}), we observe that for any $s \in [0,T]$,
\[
\cV(s,\rvlaw[X^{0, \xi}_s]) = \Phi \Big( \rvlaw[X^{s, {X^{0, \xi}_s}}_T] \Big)   = \Phi (\rvlaw[X^{0, \xi}_T])  . 
\]
Hence, this function is constant in time $s \in [0,T]$. In particular, by the terminal condition, we have $$
\Phi \big( \mclaw[T] \big) = \Phi \big( \rvlaw[X^{0, \xi}_T] \big)=  \cV(T, \rvlaw[X^{0, \xi}_T]) = \cV(0, \rvlaw[\xi] ) = \cV(0, \nu ).$$
 By the terminal condition for the PDE, we notice that $$ \Phi( \nlaw[T]) = \cV(T,\nlaw[T]).$$
Therefore, the error between the particle system and the McKean-Vlasov limit decomposes as 
\begin{eqnarray}
\Phi(\nlaw[T])  - \Phi ( \mclaw[T] ) & = &  \cV(T,\nlaw[T]) - \cV(0,\nu) \nonumber \\
& = & \big( \cV(T,\nlaw[T]) - \cV(0,\nlaw[0]) \big)+ \big( \cV(0,\nlaw[0]) - \cV(0,\nu) \big). \label{eq:errordecomp}
\end{eqnarray}
This decomposition enables us to prove the following result. 
\begin{theorem} \label{weak error Mckean particles}
Suppose that $b, \sigma \in \cM_2 \big( \bR^d  \times \mathcal{P}_2(\bR^d) \big) $ and $\Phi \in \cM_2 \big(  \mathcal{P}_2(\bR^d) \big) $. Then the weak error in the particle approximation satisfies
\begin{eqnarray}
&& \Big| \bE[\Phi(\nlaw[T])] - \Phi (\mclaw[T])   \Big|
 \leq \frac{C}{N},  \label{eq:expansionformula} 
\end{eqnarray}
where $C$ is a constant that depends on $\Phi$, $b$, $\sigma$ and $T$, but does not depend on $N$. 
\end{theorem}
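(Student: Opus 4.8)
The plan is to exploit the error decomposition \eqref{eq:errordecomp},
\begin{equation*}
\Phi(\nlaw[T]) - \Phi(\mclaw[T]) = \big( \cV(T,\nlaw[T]) - \cV(0,\nlaw[0]) \big) + \big( \cV(0,\nlaw[0]) - \cV(0,\nu) \big),
\end{equation*}
and to bound the expectation of each bracket by $C/N$ separately. Since $b,\sigma \in \cM_2$ and $\Phi \in \cM_2$, Theorem \ref{eq:generalisationmainresult} guarantees that $\cV \in \cM_2([0,T]\times\cP_2(\bR^d))$ and that $\cV$ solves the PDE \eqref{eq pde measure}; in particular the derivatives $\partial_\mu \cV$, $\partial_v\partial_\mu\cV$ and $\partial^2_\mu\cV$ are bounded uniformly in time, and $a=\sigma\sigma^T$ is bounded. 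These boundedness properties are exactly what produces a dimension-independent rate.

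For the dynamic term $\cV(T,\nlaw[T]) - \cV(0,\nlaw[0])$, I would apply the classical It\^o formula to the finite-dimensional projection $V(s,x_1,\ldots,x_N)=\cV(s,\frac1N\sum_i\delta_{x_i})$ along the particle trajectories, which is legitimate by Proposition 3.1 of \cite{chassagneux2014probabilistic}. Using the chain rule relating the Euclidean derivatives of $V$ to the L-derivatives of $\cV$,
\begin{equation*}
\partial_{x_i}V = \tfrac1N\partial_\mu\cV(s,\nlaw[s])(Y^{i,N}_s), \qquad \partial^2_{x_i x_i}V = \tfrac1N \partial_v\partial_\mu\cV(s,\nlaw[s])(Y^{i,N}_s) + \tfrac1{N^2}\partial^2_\mu\cV(s,\nlaw[s])(Y^{i,N}_s,Y^{i,N}_s),
\end{equation*}
together with the independence of the Brownian motions (which kills all mixed $\partial^2_{x_i x_j}$, $i\neq j$, terms in the quadratic variation), I obtain after taking expectations
\begin{equation*}
\bE\big[\cV(T,\nlaw[T]) - \cV(0,\nlaw[0])\big] = \bE\int_0^T \Big( \mathcal{R}_s + \tfrac{1}{2N^2}\sum_{i=1}^N \text{Tr}\big( a(Y^{i,N}_s,\nlaw[s]) \partial^2_\mu\cV(s,\nlaw[s])(Y^{i,N}_s,Y^{i,N}_s) \big) \Big)\,ds,
\end{equation*}
where $\mathcal{R}_s$ collects the $\partial_s$, first-order and diagonal $\partial_v\partial_\mu$ terms. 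The crucial point is that $\mathcal{R}_s$ is exactly the left-hand side of the PDE \eqref{eq pde measure} evaluated at $\mu=\nlaw[s]$, since $\frac1N\sum_i(\cdots)(Y^{i,N}_s)=\int(\cdots)\,d\nlaw[s]$, so $\mathcal{R}_s\equiv 0$. Only the residual term survives, and since $\partial^2_\mu\cV$ and $a$ are bounded, the sum of $N$ terms each weighted by $1/N^2$ is $O(1/N)$.

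For the initial bias $\cV(0,\nlaw[0])-\cV(0,\nu)$, observe that $\nlaw[0]=\frac1N\sum_i\delta_{\xi_i}$ is the empirical measure of the i.i.d. initial data, so this is precisely the weak error of a smooth functional ($\cV(0,\cdot)\in\cM_2$) on i.i.d. samples. I would Taylor-expand $\cV(0,\cdot)$ around $\nu$ to second order in linear functional derivatives (using \eqref{eq: connection lions functional derivatives high order} to pass between L- and linear functional derivatives),
\begin{equation*}
\cV(0,\nlaw[0])-\cV(0,\nu) = \int_{\bR^d}\frac{\delta\cV(0,\cdot)}{\delta m}(\nu)(y)\,(\nlaw[0]-\nu)(dy) + \int_0^1(1-t)\!\int_{\bR^{2d}}\frac{\delta^2\cV(0,\cdot)}{\delta m^2}(m^{N}_t)(y,y')\,(\nlaw[0]-\nu)^{\otimes 2}(dy\,dy')\,dt,
\end{equation*}
with $m^N_t=(1-t)\nu+t\nlaw[0]$. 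Because the first-order kernel is frozen at the deterministic $\nu$ and $\bE[\nlaw[0]]=\nu$, the expectation of the first integral vanishes exactly. The second integral has expectation $O(1/N)$: expanding $(\nlaw[0]-\nu)^{\otimes2}=\frac1{N^2}\sum_{i,j}(\delta_{\xi_i}-\nu)\otimes(\delta_{\xi_j}-\nu)$, the $N$ diagonal terms each contribute $O(1/N^2)$, while the off-diagonal terms are treated by the particle-removal technique of Lemma \ref{delarue theorem modified} (decoupling the kernel from $\xi_i,\xi_j$ so that each centred factor integrates to zero), with the growth bound on $\frac{\delta^2\cV}{\delta m^2}$ from Proposition \ref{theorem lions linear functional} and the moment assumption \eqref{eq:Int} ensuring the relevant expectations are finite. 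Alternatively, this i.i.d. initial bias may be quoted directly from \cite{chassagneux2019weak}.

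The main obstacle is the dynamic term: one must verify that the PDE \eqref{eq pde measure} precisely annihilates the $\partial_s$, $\partial_\mu$ and diagonal $\partial_v\partial_\mu$ contributions of the It\^o expansion, leaving only the order-$1/N^2$ second-measure-derivative remainder whose sum over the $N$ particles yields the sharp $O(1/N)$ rate. Equivalently, the delicate point is the correct matching of the Euclidean derivatives of the projection $V$ with the L-derivatives of $\cV$ and the use of the independence of the $W^i$ to discard the off-diagonal second-order terms; keeping all bounds uniform in $N$ and free of the dimension throughout is what distinguishes this estimate from the Wasserstein-based bound \eqref{eq: decom W2 first term}.
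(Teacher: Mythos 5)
Your proposal follows essentially the same route as the paper's proof: the decomposition \eqref{eq:errordecomp}, It\^{o}'s formula applied to the finite-dimensional projection $V$ with the chain-rule identities for $\partial_{x_i}V$ and $\partial^2_{x_i}V$, cancellation of the first-order terms by the PDE \eqref{eq pde measure} evaluated along $(s,\nlaw[s])$, and the surviving $\frac{1}{N^2}\sum_i \mathrm{Tr}(a\,\partial^2_\mu\cV)$ remainder giving the $O(1/N)$ rate. The only difference is that you sketch the i.i.d.\ initial-bias bound explicitly via a second-order Taylor expansion in linear functional derivatives, whereas the paper simply cites Lemma 2.5 and Theorem 2.11 of \cite{chassagneux2019weak} --- an alternative you also mention --- so the argument is correct and matches the paper.
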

\begin{proof}
We first recall the definition of $V$ defined in \eqref{eq:finitedimproj}. By the assumptions on $b$ and $\sigma$, the standard It\^{o}'s formula is applicable to $V$ by Proposition 3.1 of \cite{chassagneux2014probabilistic}. 
Let $\mathbf{x}=(x_1, \ldots, x_N)$. Moreover, we know from this theorem that
\[
\frac{ \partial V}{\partial x_i} ( s,\mathbf{x}) =  \frac{1}{N} \pmu {\cV} \bigg(s, \frac{1}{N}
\sum_{j=1}^N \delta_{x_j} \bigg)(x_i) \]
and
\[
\frac{ \partial^2 V}{\partial x^2_i} ( s,\mathbf{x}) =  \frac{1}{N} \partial_v \pmu {\cV} \bigg(s, \frac{1}{N}
\sum_{j=1}^N \delta_{x_j} \bigg)(x_i) + \frac{1}{N^2} \ptwomu {\cV} \bigg(s, \frac{1}{N}
\sum_{j=1}^N \delta_{x_j} \bigg)(x_i,x_i), \]
for any $s \in [0,T]$, $x_1, \ldots, x_N \in \bR^d$.
Let $\mathbf{Y}^N:=(Y^{1,N} , \ldots, Y^{N,N})$.  Then
\begin{equation*}
\begin{split}
&  {\cV}(T,\nlaw[T]) - {\cV}(0,\nlaw[0])   
 =    V({T}, \mathbf{Y}^N_{T}) - V(0, \mathbf{Y}^N_0)  \\
& =  \bigg[ \int_0^{T} \frac{ \partial V}{ \partial s} (s, \mathbf{Y}^N_s) + \sum_{i=1}^N \frac{\partial V}{\partial x_i} ( s, \mathbf{Y}^N_s)  b \big( Y^{i,N}_s, \nlaw[s] \big)  
+ \frac{1}{2}  \text{Tr} \bigg( a \big( Y^{i,N}_s, \nlaw[s]  \big)   \sum_{i=1}^N \frac{\partial^2 V}{\partial x^2_i}  (s,\mathbf{Y}^N_s) \bigg) \,ds \bigg]  \\
& +  \sum_{i=1}^N \int_0^{T}  \sigma( Y^{i,N}_s, \nlaw[s] )^T \frac{\partial V}{\partial x_i} ( s,\mathbf{Y}^N_s) \cdot dW_s^i  \\
& =   \int_0^{T}  \partial_s {\cV} \big( s, \nlaw[s]  \big) + \sum_{i=1}^N \Bigg[ \frac{1}{N} \partial_{\mu} {\cV} \big( s, \nlaw[s]  \big)( {Y^{i,N}_s} ) b \big( Y^{i,N}_s, \nlaw[s]  \big)   \\
& \, \, + \frac{1}{2}  \text{Tr} \Bigg( a \big( Y^{i,N}_s, \nlaw[s]  \big) \bigg( \frac{1}{N} \partial_{v}   \partial_{\mu} {\cV} \big( s, \nlaw[s]  \big)  (Y^{i,N}_s)  + \frac{1}{N^2} \partial^2_{\mu}  {\cV} \big( s, \nlaw[s] \big)  (Y^{i,N}_s, Y^{i,N}_s) \bigg) \Bigg) \Bigg] \,ds \\
& +  \frac{1}{N} \sum_{i=1}^N \int_0^{T}  \sigma( Y^{i,N}_s, \nlaw[s] )^T \partial_{\mu} {\cV} \big( s, \nlaw[s]  \big) (  Y^{i,N}_s) \cdot dW_s^i.
\end{split}
\end{equation*}
By \eqref{eq:errordecomp} and PDE \eqref{eq pde measure} evaluated at  $(s,\nlaw[s])_{s \in [0,T]}$, the expression simplifies to
\begin{eqnarray} 
     \Phi (\nlaw[T])  - \Phi (\mclaw[T])  & =  & \big( \cV(0,\nlaw[0]) - \cV(0,\nu) \big)  \nonumber \\
     && + \int_0^{T} \frac{1}{2} \Bigg[\frac{1}{N^2} \sum_{i=1}^N  \text{Tr} \bigg( a \big(Y^{i,N}_s, \nlaw[s]  \big)    \partial^2_{\mu}  {\cV} \big( s, \nlaw[s]  \big)  (Y^{i,N}_s, Y^{i,N}_s) \bigg) \Bigg] \,ds \nonumber  \\
&& +\frac{1}{N}\sum_{i=1}^N   \int_0^{T}  \sigma( Y^{i,N}_s, \nlaw[s] )^T \partial_{\mu} {\cV} \big( s, \nlaw[s]  \big) ( Y^{i,N}_s) \cdot dW_s^i. \label{eq representation}
\end{eqnarray}
It follows from Lemma 2.5 and Theorem 2.11 from \cite{chassagneux2019weak} that
$$ \big| \bE\big( \cV(0,\nlaw[0]) - \cV(0,\nu) \big) \big| \leq \frac{C}{N}. $$ 
Taking expectation on both sides of \eqref{eq representation} completes the proof. 
\end{proof} 
The next theorem concerns the weak error between \eqref{eq:MVSDE} and \eqref{eq:Euler}.
\begin{theorem}  \label{weak error Mckean euler}
Suppose that $b \in \cM_2 \big( \bR^d  \times \mathcal{P}_2(\bR^d) \big) $ , $\Phi \in \cM_2 \big(  \mathcal{P}_2(\bR^d) \big) $ and that $\sigma$ is constant. Then the weak error in the particle approximation with Euler scheme satisfies
\begin{eqnarray}
&& \Big| \bE[\Phi(\eulerlaw[T])] - \Phi (\mclaw[T])   \Big|
 \leq C \Big( \frac{1}{N} +h \Big),  \label{eq:expansionformulaeuler} 
\end{eqnarray}
where $C$ is a constant that depends on $\Phi$, $b$, $\sigma$ and $T$, but does not depend on $N$ or $h$. 
\end{theorem}
\begin{proof}
The main idea of the proof is identical to the previous theorem, with the extra complication of time discretisation. Let $\mathbf Z^{N,h}:= (Z^{1,N,h}, \ldots, Z^{N,N,h})$. As before, by Lemma 2.5 and Theorem 2.11 from \cite{chassagneux2019weak},
\begin{equation*} \big| \bE\big( {\cV}(0,\eulerlaw[0]) - \cV(0,\nu) \big) \big| \leq \frac{C}{N}. 
\end{equation*} 
Since $\sigma$ is constant, we adopt the existing convention that the diffusion matrix $a$ is defined by $a = \sigma \sigma^T$. Next, by the previous analysis, we observe that 
\begin{eqnarray} 
&& \big( \Phi(\eulerlaw[T]) - \Phi (\mclaw[T]) \big) - \big( {\cV}(0,\eulerlaw[0]) - \cV(0,\nu) \big) \nonumber \\
&= &  {\cV}(T,\eulerlaw[T]) - {\cV}(0,\eulerlaw[0]) \nonumber \\   
& = &   V({T}, \mathbf Z^{N,h}_{T}) - V(0, \mathbf{Z}^{N,h}_0) \nonumber \\
& = & \bigg[ \int_0^{T} \frac{ \partial V}{ \partial s} (s, \mathbf Z^{N,h}_s) + \sum_{i=1}^N \frac{\partial V}{\partial x_i} ( s, \mathbf Z^{N,h}_s)  b \big( Z^{i,N,h}_{\eta(s)}, \etaeulerlaw[s] \big)  
+ \frac{1}{2}  \text{Tr} \bigg( a  \sum_{i=1}^N \frac{\partial^2 V}{\partial x^2_i}  (s,\mathbf Z^{N,h}_s) \bigg) \,ds \bigg] \nonumber \\
&& +  \int_0^{T}  \sum_{i=1}^N \frac{\partial V}{\partial x_i} ( s, \mathbf Z^{N,h}_s)^T \sigma \, dW_s^i \nonumber \\
& = &  \int_0^{T}  \partial_s {\cV} \big( s, \eulerlaw[s]  \big) + \sum_{i=1}^N \Bigg[ \frac{1}{N} \partial_{\mu} {\cV} \big( s, \eulerlaw[s]  \big)( {Z^{i,N,h}_s} )  b \big( Z^{i,N,h}_{\eta(s)}, \etaeulerlaw[s] \big)    \nonumber \\
&& \, \, + \frac{1}{2}  \text{Tr} \Bigg( a   \bigg( \frac{1}{N} \partial_{v}   \partial_{\mu} {\cV} \big( s, \eulerlaw[s] \big)  ({Z^{i,N,h}_s} )  
+ \frac{1}{N^2} \partial^2_{\mu}  {\cV} \big( s, \eulerlaw[s] \big)  ({Z^{i,N,h}_s} , {Z^{i,N,h}_s} ) \bigg) \Bigg) \Bigg] \,ds \nonumber \\
&& +  \int_0^{T}  \sum_{i=1}^N \frac{1}{N} \partial_{\mu} {\cV} \big( s, \eulerlaw[s]  \big) ( {Z^{i,N,h}_s})^T \sigma \, dW_s^i \nonumber \\
& = &  \int_0^{T} \sum_{i=1}^N \Bigg[ \frac{1}{N} \partial_{\mu} {\cV} \big( s, \eulerlaw[s]  \big)( {Z^{i,N,h}_s} ) \big( b \big( Z^{i,N,h}_{\eta(s)}, \etaeulerlaw[s]  \big) - b \big( Z^{i,N,h}_{s}, \eulerlaw[s]  \big) \big) \nonumber  \\
&& \, \, + \frac{1}{2}  \text{Tr} \Bigg( a  \frac{1}{N^2} \partial^2_{\mu}  {\cV} \big( s, \eulerlaw[s] \big)  (Z^{i,N,h}_s, Z^{i,N,h}_s) \Bigg) \Bigg] \,ds +  \int_0^{T}  \sum_{i=1}^N \frac{1}{N} \partial_{\mu} {\cV} \big( s, \eulerlaw[s] \big) ( Z^{i,N,h}_s)^T \sigma \, dW_s^i. \nonumber \\
&& \label{eq:PDEcancellationeuler}
\end{eqnarray}
The second term of \eqref{eq:PDEcancellationeuler} clearly converges to zero in the rate $O(1/N)$ upon taking expectation. The third term  of \eqref{eq:PDEcancellationeuler} becomes zero upon taking expectation. It remains to deal with the first term  of \eqref{eq:PDEcancellationeuler}. 
Let $\{ \cF_t \}_{t \in [0,T]} $ be the filtration generated by $W^1, \ldots, W^N$. Then, by the It\^{o}'s formula, for each $k \in \{1, \ldots, d \}$,
\begin{eqnarray}
&& \bE \Big[ b_k \big( Z^{i,N,h}_{\eta(s)}, \etaeulerlaw[s]  ) - b_k \big( Z^{i,N,h}_{s}, \eulerlaw[s]  \big) \Big| \cF_{\eta(s)} \Big]  \nonumber \\
& = & - \bE \bigg[ \int_{\eta(s)}^s \bigg( \partial_x b_k( Z^{i,N,h}_{r}, \eulerlaw[r]  ) + \frac{1}{N} \pmu b_k( Z^{i,N,h}_{r}, \eulerlaw[r]   \big)(Z^{i,N,h}_r)\bigg) \cdot dZ^{i,N,h}_{r} \nonumber \\
&& + \sum_{j \neq i} \int_{\eta(s)}^s \frac{1}{N} \pmu b_k( Z^{i,N,h}_{r}, \eulerlaw[r]   \big)(Z^{j,N,h}_r)  \cdot dZ^{j,N,h}_{r} \nonumber \\
&& + \int_{\eta(s)}^s \text{Tr} \bigg( \bigg( \partial^2_x b_k(Z^{i,N,h}_r, \eulerlaw[r]) + \frac{2}{N}  \partial_x \pmu b_k(Z^{i,N,h}_r, \eulerlaw[r])(Z^{i,N,h}_r) \nonumber \\
&& + \frac{1}{N} \partial_v \pmu b_k(Z^{i,N,h}_r, \eulerlaw[r])(Z^{i,N,h}_r)  + \frac{1}{N^2} \ptwomu b_k(Z^{i,N,h}_r, \eulerlaw[r])(Z^{i,N,h}_r,Z^{i,N,h}_r) \bigg) \,d\lev Z^{i,N,h} \rev_{r} \bigg) \nonumber \\
&& + \sum_{j \neq i} \int_{\eta(s)}^s \text{Tr} \bigg( \bigg( \frac{1}{N} \partial_v \pmu b_k(Z^{i,N,h}_r, \eulerlaw[r])(Z^{j,N,h}_r) \nonumber \\
&& + \frac{1}{N^2} \ptwomu b_k(Z^{i,N,h}_r, \eulerlaw[r])(Z^{j,N,h}_r,Z^{j,N,h}_r) \bigg) \,d\lev Z^{j,N,h} \rev_{r} \bigg) \bigg| \cF_{\eta(s)} \bigg]  \nonumber \\
 & = & - \bE \bigg[ \int_{\eta(s)}^s  \sum_{j=1}^N   \frac{1}{N} \pmu b_k( Z^{i,N,h}_{r}, \eulerlaw[r]   \big)(Z^{j,N,h}_r) \,b(Z^{j,N,h}_{\eta(r)}, \etaeulerlaw[r]) \,dr \nonumber \\
 && +  \int_{\eta(s)}^s  \partial_x b_k( Z^{i,N,h}_{r}, \eulerlaw[r]   \big) \,b(Z^{i,N,h}_{\eta(r)}, \etaeulerlaw[r]) \, dr  \nonumber \\
 && + \sum_{j=1}^N  \int_{\eta(s)}^s \frac{1}{N} \pmu b_k( Z^{i,N,h}_{r}, \eulerlaw[r]   \big)(Z^{j,N,h}_r)^T \,\sigma \,dW^j_r +   \frac{1}{N} \int_{\eta(s)}^s \partial_x b_k( Z^{i,N,h}_{r}, \eulerlaw[r]   \big)^T \,\sigma \, dW^i_r  \nonumber \\
&& + \sum_{j=1}^N  \int_{\eta(s)}^s \text{Tr} \bigg( a \bigg( \frac{1}{N} \partial_v \pmu b_k(Z^{i,N,h}_r, \eulerlaw[r])(Z^{j,N,h}_r) \nonumber \\
&& + \frac{1}{N^2} \ptwomu b_k(Z^{i,N,h}_r, \eulerlaw[r])(Z^{j,N,h}_r,Z^{j,N,h}_r) \bigg)  \bigg) \, dr \nonumber \\
&& +  \int_{\eta(s)}^s \text{Tr} \bigg( a \bigg( \partial^2_x b_k(Z^{i,N,h}_r, \eulerlaw[r]) +  \frac{2}{N}  \partial_x \pmu b_k(Z^{i,N,h}_r, \eulerlaw[r])(Z^{i,N,h}_r)  \bigg)  \bigg) \, dr  \bigg| \cF_{\eta(s)} \bigg]  \nonumber \\
& = & - \int_{\eta(s)}^s  \bE \bigg[ \sum_{j=1}^N  \frac{1}{N} \pmu b_k( Z^{i,N,h}_{r}, \eulerlaw[r]   \big)(Z^{j,N,h}_r) \,b(Z^{j,N,h}_{\eta(r)}, \etaeulerlaw[r])  \nonumber \\
 && +  \partial_x b_k( Z^{i,N,h}_{r}, \eulerlaw[r]   \big) \,b(Z^{i,N,h}_{\eta(r)}, \etaeulerlaw[r])  \nonumber \\
&& + \sum_{j=1}^N  \text{Tr} \bigg(a \bigg( \frac{1}{N} \partial_v \pmu b_k(Z^{i,N,h}_r, \eulerlaw[r])(Z^{j,N,h}_r) + \frac{1}{N^2} \ptwomu b_k(Z^{i,N,h}_r, \eulerlaw[r])(Z^{j,N,h}_r,Z^{j,N,h}_r) \bigg) \bigg)  \nonumber \\
&& +  \text{Tr} \bigg( a \bigg( \partial^2_x b_k(Z^{i,N,h}_r, \eulerlaw[r]) +  \frac{2}{N}  \partial_x \pmu b_k(Z^{i,N,h}_r, \eulerlaw[r])(Z^{i,N,h}_r)  \bigg)  \bigg)  \bigg| \cF_{\eta(s)} \bigg]  \, dr. \nonumber \\ && \label{eq:conditional expectation arg weak} \end{eqnarray}
Hence, upon taking expectation, by \eqref{eq:conditional expectation arg weak}, the first term of \eqref{eq:PDEcancellationeuler} can be rewritten as
\begin{eqnarray}
&& \int_0^{T} \sum_{i=1}^N \bE \bigg[ \frac{1}{N} \partial_{\mu} {\cV} \big( s, \eulerlaw[s]  \big)( {Z^{i,N,h}_{s}} ) \big( b \big( Z^{i,N,h}_{\eta(s)}, \etaeulerlaw[s]  \big) - b \big( Z^{i,N,h}_{s}, \eulerlaw[s]  \big) \big) \bigg] \,ds \nonumber \\
& = & \int_0^{T} \frac{1}{N} \sum_{i=1}^N \sum_{k=1}^d \bE  \bigg[ \Big( \partial_{\mu} {\cV}  \big( s, \eulerlaw[s]  \big)( {Z^{i,N,h}_{s}} )  \Big)_k \bE \bigg[ \big( b_k \big( Z^{i,N,h}_{\eta(s)}, \etaeulerlaw[s]  \big) - b_k \big( Z^{i,N,h}_{s}, \eulerlaw[s]  \big) \big) \bigg| \cF_{\eta(s)} \bigg] \bigg]\,ds  \nonumber \\
& = & - \int_0^{T} \int_{\eta(s)}^s \frac{1}{N} \sum_{i=1}^N \sum_{k=1}^d \bE \bigg[  \Big( \partial_{\mu} {\cV}  \big( s, \eulerlaw[s]  \big)( {Z^{i,N,h}_{s}} )  \Big)_k \times \nonumber \\
&&  \bigg[ \sum_{j=1}^N  \frac{1}{N} \pmu b_k( Z^{i,N,h}_{r}, \eulerlaw[r]   \big)(Z^{j,N,h}_r) \,b(Z^{j,N,h}_{\eta(r)}, \etaeulerlaw[r])   +   \partial_x b_k( Z^{i,N,h}_{r}, \eulerlaw[r]   \big) \,b(Z^{i,N,h}_{\eta(r)}, \etaeulerlaw[r])  \nonumber \\
&& + \sum_{j=1}^N  \text{Tr} \bigg( a \bigg( \frac{1}{N} \partial_v \pmu b_k(Z^{i,N,h}_r, \eulerlaw[r])(Z^{j,N,h}_r) + \frac{1}{N^2} \ptwomu b_k(Z^{i,N,h}_r, \eulerlaw[r])(Z^{j,N,h}_r,Z^{j,N,h}_r) \bigg) \bigg)  \nonumber \\
&& +  \text{Tr} \bigg( a \bigg( \partial^2_x b_k(Z^{i,N,h}_r, \eulerlaw[r]) +  \frac{2}{N}  \partial_x \pmu b_k(Z^{i,N,h}_r, \eulerlaw[r])(Z^{i,N,h}_r)  \bigg) \bigg) \bigg] \bigg] \,dr \,ds. \nonumber 
\end{eqnarray}
Finally, by \eqref{eq:boundedinL2} and the fact that $\cV \in \cM_2([0,T] \times \cP_2(\bR^d))$, we have
$$ \bigg| \int_0^{T} \sum_{i=1}^N \bE \bigg[ \frac{1}{N} \partial_{\mu} {\cV} \big( s, \eulerlaw[s]  \big)( {Z^{i,N,h}_{s}} ) \big( b \big( Z^{i,N,h}_{\eta(s)}, \etaeulerlaw[s]  \big) - b \big( Z^{i,N,h}_{s}, \eulerlaw[s]  \big) \big) \bigg] \,ds \bigg| \leq Ch.$$
\end{proof}
\end{subappendices}

\end{document}